\newcommand{\Z}{\mathbb{Z}}
\newcommand{\R}{\mathbb{R}}
\newcommand{\characteristic}{\operatorname{char}} 
\newcommand{\rad}{\operatorname{rad}} 
\newcommand{\soc}{\operatorname{soc}} 
\newcommand{\im}{\operatorname{Im}} 
\newcommand{\ch}{\operatorname{ch}} 
\newcommand{\m}{\operatorname{m}} 
\newcommand{\SymP}{\operatorname{Sym}}  
\newcommand{\rank}{\operatorname{rank}} 
\newcommand{\Aff}{\operatorname{Aff}} 
\newcommand{\Ext}{\operatorname{Ext}} 
\newcommand{\Hom}{\operatorname{Hom}} 
\newcommand{\ind}{\operatorname{ind}}
\newcommand{\SL}{\operatorname{SL}} 
\newcommand{\SO}{\operatorname{SO}} 
\newcommand{\Spin}{\operatorname{Spin}} 
\def\bigquotient#1#2{%
    \left.\raise1ex\hbox{$#1$}\middle/\lower1ex\hbox{$#2$}\right.%
}
\def\quotient#1#2{%
    \left.\raise0.3ex\hbox{$#1$} \middle/\lower0.3ex\hbox{$#2$}\right.%
}
\newtheoremstyle{thme}
{2}						
{2}						
{\itshape}				
{}						
{\bfseries}				
{}						
{\newline}				
{}						
\newtheoremstyle{rem}	
{2}						
{2}						
{\upshape}				
{}						
{\bfseries}				
{}						
{\newline}				
{}						
\theoremstyle{thme}
\newtheorem{thmintro}{Theorem}
\newtheorem{corolintro}[thmintro]{Corollary} 
\newtheorem{propintro}[thmintro]{Proposition} 
\newtheorem{thm}{Theorem}[section]
\newtheorem*{thm*}{Theorem}
\newtheorem{prop}[thm]{Proposition}
\newtheorem{corol}[thm]{Corollary}
\newtheorem{lem}[thm]{Lemma}
\theoremstyle{rem}
\newtheorem*{remarksintro}{Remarks}
\newtheorem{remark}[thm]{Remark}
\newtheorem{defin}[thm]{Definition}
\begin{document}     
%


\title{Structure of certain Weyl modules for the Spin groups}
\author{Mika\"el Cavallin}
\address{Fachbereich Mathematik, Postfach 3049, 67653 Kaiserslautern, Germany.} 
\email{cavallin.mikael@gmail.com} 
\thanks{The author would like to acknowledge the support of the Swiss National Science Foundation through grants no. 20020-135144 as well as the ERC Advanced Grant through grants no. 291512.}

\begin{abstract}     

Let $K$ be an algebraically closed field of characteristic $p\geqslant 0$ and let $W$ be a finite-dimensional $K$-vector space of dimension greater than or equal to $5.$ In this paper, we give the structure of certain Weyl modules for $G=\Spin(W)$ in the case where $p\neq 2,$ as well as the dimension of the corresponding irreducible, finite-dimensional, rational $KG$-modules. In addition, we determine the composition factors of the restriction of certain irreducible, finite-dimensional, rational $K\SL(W)$-modules to $\SO(W).$  
\end{abstract}


\maketitle


\section{Introduction}     

Let $K$ be an algebraically closed field of characteristic $p\geqslant 0,$ and let $G$ be a simply connected, simple algebraic group over $K.$ Fixing a Borel subgroup $B$ of $G$ containing a maximal torus $T$ of $G,$ one obtains an associated set of dominant weights for $T,$ denoted by $X^+(T).$ It is well-known that for each $\varpi\in X^+(T),$ there exists a unique (up to isomorphism) finite-dimensional, irreducible, rational $KG$-module $L_G(\varpi)$ having highest weight $\varpi.$ In other words, the isomorphism classes of finite-dimensional, irreducible, rational modules for $G$ are in one-to-one correspondence with the aforementioned dominant weights for $T.$  
\vspace{5mm}

In characteristic zero, the dimension of each $L_G(\varpi)$ is known, and is given by the well-known Weyl's degree formula \cite[Corollary 24.3]{Humphreys1}. Also weight multiplicities in $L_G(\varpi)$ can be recursively computed using Freudenthal's formula \cite{Freudenthal}, or one of the many variants developed in the last decades. (We refer the reader to \cite{MP}, \cite{Bremner}, \cite{Racah}, \cite{Sahi}, \cite{LePa}, \cite{Sch}, or \cite{Cavallin} for a few examples.)  Closed formulas can also be used to obtain information on weight multiplicities, or even on the so-called character of a given irreducible module (see \cite{Racah} or \cite{Kostant}, for instance). Observe, however, that those methods   are often quite demanding in terms of complexity.
\vspace{5mm}

In positive characteristic, not much is known about irreducible $KG$-modules in general. However, following the construction in \cite[Section 2]{St1}, one obtains a universal highest weight module $V_G(\varpi)$ of highest weight $\varpi,$ for every $\varpi\in X^+(T),$ by finding an appropriate $\mathbb{Z}$-form in a suitable irreducible module for the corresponding complex Lie algebra, and then tensoring it by $K.$ The $KG$-module $V_G(\varpi)$ is called the \emph{Weyl module} of highest weight $\varpi,$ and has the property that its quotient by its unique maximal   submodule $\rad (\varpi)$ is irreducible with highest weight $\varpi.$ In other words, we have
$$
L_G(\varpi) \cong \bigquotient{V_G(\varpi)}{\rad (\varpi)}.
$$

The formulas introduced above can be used to determine the dimension, the weight multiplicities, and the character of $V_G(\varpi).$ The problem consisting in determining the composition factors of $V_G(\varpi),$ on the other hand, is essentially equivalent to the determination of weight multiplicities in simple modules for $G$: no closed formula is known to this day, and there seems to be no expectation of finding one in the near future. Altough it is possible to proceed in a recursive fashion, by  arguing on generating sets for weight spaces, those processes are again quite demanding in terms of complexity,  and give no insight on the obtained values. For sufficiently large $p$ and small enough $\varpi,$ other tools are at our disposal, like Kazhdan-Lusztig polynomials \cite{KaLu}, or the Jantzen $p$-sum formula \cite[Proposition 8.19]{Jantzen}. The former allows one to compute weight multiplicities in a recursive fashion, inspired by the study of Verma modules in characteristic zero \cite[Chapter 8]{Humphreys3}. The latter provides a tool for computing all characters of irreducible modules, but generally only in small rank. 
\vspace{5mm}

In this paper, we determine the structure of certain Weyl modules for $G$ in the case where $ \characteristic K \neq 2$ and  $G=\Spin(W),$ with $W$ a $K$-vector space of dimension at least $5.$ In order to do so, we proceed in two steps: inspired by an idea of McNinch \cite{McNinch}, we first determine the composition factors of a well-chosen tilting module for $G,$ in order to reduce the list of possible composition factors for $V_G(\varpi),$ thanks to a generalization of \cite[Proposition 4.6.2]{McNinch}, namely Proposition \ref{chT(omega)_as_a_sum_of_xi's}. Finally, a suitable use of a truncated version of the Jantzen $p$-sum formula (Theorem \ref{How_to_determine_contributions}) yields the desired result. We then  deduce the dimensions of the corresponding irreducible $KG$-modules, and conclude by proving a result on the composition factors of the  restriction to $\SO(W)$ of certain $\SL(W)$-modules.

\subsection{Statements of results}     

Assume $\characteristic K\neq 2,$ and let $G$ be a simply connected, simple algebraic group of type $B_n$ $(n\geqslant 2)$ or $D_n$ $(n\geqslant 3).$ Fix a Borel subgroup $B = UT$ of $G,$ where $T$ is a maximal torus of $G$ and $U$ is the unipotent radical of $B,$ let $\Pi = \{\beta_1,\ldots,\beta_n\}$ denote a corresponding base of the root system $\Phi$ of $G,$ and let $\{\varpi_1,\ldots,\varpi_n\}$ be the set of fundamental dominant weights for $T$ corresponding to our choice of base $\Pi,$ ordered as in \cite{Bourbaki}. Also adopt the notation 
$$
\Lambda(B_n)=\{\varpi_i:~1\leqslant i\leqslant n\}\cup \{2\varpi_n\},
$$ 
as well as 
$$
\Lambda(D_n)=\{\varpi_i:~1\leqslant i\leqslant n\}\cup \{2\varpi_{n-1},2\varpi_n,\varpi_{n-1}+\varpi_n\}.
$$

Since we assumed $\characteristic K\neq 2,$ any Weyl module for $G$ having highest weight $\varpi\in \Lambda(G)$ is irreducible (see Lemmas \ref{Dimension_of_various_irreducibles_in_type_B} and \ref{Dimension_of_various_irreducibles_in_type_D}, for instance), and hence the dimension of $V_G(\varpi)$, as well as its weight multiplicities, can be computed using the tools provided by the theory in characteristic zero. In this paper, we thus focus our attention on Weyl modules having slightly more complicated highest weights, namely weights belonging to the set
$$
\Lambda_1(G) = \varpi_1+\Lambda(G).
$$
The result of our investigation (which can be viewed as a generalization of \cite[Lemma 4.9.2]{McNinch}, in which the case $\varpi=\varpi_1+\varpi_2$ is dealt with) is recorded in the following theorem. For $\ell \in \Z_{\geqslant 0}$ a prime, we let  $\epsilon_\ell:\Z_{\geqslant 0} \rightarrow \{0,1\}$ be the map defined by
$$
\epsilon_\ell(z)=
\begin{cases}
1 & \mbox{ if $\ell$ divides $z;$} \cr
0 & \mbox{ otherwise.}
\end{cases}
$$

\begin{thmintro}\label{Main_Result_1}  
Assume $\characteristic K \neq 2,$ and let $G$ be a simply connected, simple algebraic group of type $B_n$ $,n\geqslant 2$ (resp. $D_n$ $,n\geqslant 3$), over $K.$ Also let  $\varpi$ be as in the first column of  Table \textnormal{\ref{Table_B}} (resp. Table \textnormal{\ref{Table_D}}). Then the structure of the radical $\rad (\varpi)$ of $V_G(\varpi)$ is given by the second column of the table.
\begin{table}[h]
\center
\begin{tabular}{cc}
\hline \hline \\ [-2ex] 
$\varpi$	&	$\rad (\varpi)$	\\  
\hline \hline \\ [-1.5ex]
$2\varpi_1$								&	$K^{\epsilon_p(2n+1)}$	\cr
$\varpi_1+\varpi_j$	$(2\leqslant j\leqslant n-2)$	& 	$L_G(\varpi_{j+1})^{\epsilon_p(j+1)} \oplus L_G(\varpi_{j-1})^{\epsilon_p(2n-j+2)}$							\cr
$\varpi_1+\varpi_{n-1}	$					& 	$L_G(2\varpi_n)^{\epsilon_p(n)} \oplus L_G(\varpi_{n-2})^{\epsilon_p(n+3)}$							\cr 	
$\varpi_1+ \varpi_n$			&  $  L_G( \varpi_n)^{\epsilon_p(2n+1)}$ 	\cr 
$\varpi_1+2\varpi_n	$		& 	$L_G(2\varpi_n)^{\epsilon_p(n+1)} \oplus L_G(\varpi_{n-1})^{\epsilon_p(n+2)}$						  	 							\\[0.15cm]
\hline\cr
\end{tabular}
\caption{Structure of certain Weyl modules for $G$ of type $B_n$ $(n\geqslant 2).$}
\label{Table_B}
\end{table}

\begin{table}[h]
\center
\begin{tabular}{cc}
\hline \hline \\[-2ex] 
$\varpi$	&	$\rad (\varpi)$	\\  
\hline \hline \\ [-1.5ex]
$2\varpi_1$								&	$K^{\epsilon_p(n)}$	\cr
$\varpi_1+\varpi_j$	$(2\leqslant j\leqslant n-3)$	& 	$L_G(\varpi_{j+1})^{\epsilon_p(j+1)} \oplus L_G(\varpi_{j-1})^{\epsilon_p(2n-j+1)}$						\cr
$\varpi_1+\varpi_{n-2}$					&	$L_G(\varpi_{n-1}+\varpi_n)^{\epsilon_p(n-1)} \oplus L_G(\varpi_{n-3})^{\epsilon_p(n+3)}$								\cr
$\varpi_1+\varpi_n	$					& 	$L_G(\varpi_{n-1})^{\epsilon_p(n)}$																	\cr 	$\varpi_1+\varpi_{n-1}+\varpi_n$			&  $L_G(2\varpi_{n-1})^{\epsilon_p(n)} \oplus L_G(2\varpi_{n})^{\epsilon_p(n)}  \oplus L_G(\varpi_{n-2})^{\epsilon_p(n+2)}$  	\cr
$\varpi_1+2\varpi_n	$		& 	$L_G(\varpi_{n-1}+\varpi_n)^{\epsilon_p(n+1)}$						  	 							\\[0.15cm]
\hline\cr
\end{tabular}
\caption{Structure of certain Weyl modules for $G$ of type $D_n$ $(n\geqslant 3).$}
\label{Table_D}
\end{table}
\end{thmintro}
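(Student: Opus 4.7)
The plan is to follow the two-step strategy sketched in the introduction: first use a well-chosen tilting module to cut down the list of candidate composition factors of $V_G(\varpi)$, and then apply the truncated Jantzen $p$-sum formula to determine their actual multiplicities.

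I treat each case $\varpi = \varpi_1 + \mu$ with $\mu \in \Lambda(G)$ in a uniform fashion. Since $\characteristic K \neq 2$, both the natural module $V_G(\varpi_1)$ and $V_G(\mu)$ are irreducible (as $\varpi_1,\mu \in \Lambda(G)$), hence tilting, so the tensor product $M = V_G(\varpi_1) \otimes V_G(\mu)$ is a tilting module whose unique indecomposable summand of highest weight $\varpi$ is $T_G(\varpi)$. A Pieri-type branching rule for the orthogonal groups decomposes $M$ in characteristic zero into irreducible summands whose highest weights are precisely $\varpi$ together with the dominant weights appearing in the second column of Tables \ref{Table_B} and \ref{Table_D}. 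Proposition \ref{chT(omega)_as_a_sum_of_xi's} then transports this decomposition to positive characteristic by expressing $\ch T_G(\varpi)$ as a controlled non-negative combination of the $\xi_\lambda$, each indexed by a $\lambda$ in that short list. Since every composition factor of $V_G(\varpi)$ is one of $T_G(\varpi)$ and the tilting-module bookkeeping shows that each candidate appears at most once, this restricts the composition factors of $\rad(\varpi)$ to the $L_G(\lambda)$ for $\lambda$ in the above short list, each with multiplicity at most one.

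To decide which of these candidates actually occur, I then apply the truncated Jantzen $p$-sum formula (Theorem \ref{How_to_determine_contributions}). For each candidate $\lambda$, one computes its contribution to $\sum_{i>0} \ch V_G(\varpi)^i$. Because of the simple shape of $\varpi = \varpi_1 + \mu$, only a handful of positive roots $\alpha \in \Phi^+$ produce a weight $s_\alpha\cdot(\varpi+\rho)-\rho$ which is dominant and linked to $\lambda$, and after summing the corresponding $p$-adic valuations the total contribution collapses to $\nu_p(N_\lambda)$ for a single explicit integer $N_\lambda$ — namely $2n+1$, $j+1$, $2n-j+2$, $n$, $n+2$, $n+3$, $n+1$, as read off from the tables. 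Combined with the multiplicity-one bound from the first step, this forces $L_G(\lambda)$ to appear in $\rad(\varpi)$ precisely when $p \mid N_\lambda$, i.e.\ when $\epsilon_p(N_\lambda) = 1$.

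The main obstacle is the combinatorial bookkeeping of the second step: for each $\lambda$ one must carefully enumerate the positive roots contributing to the truncated Jantzen sum and verify that every other term vanishes or cancels. The half-spin weights $\varpi_{n-1}, \varpi_n$ and their doubles in types $B_n$ and $D_n$ will require separate treatment, since the inner products $\langle \varpi+\rho,\alpha^\vee\rangle$ and the linkage behaviour differ from the generic $\varpi_1+\varpi_j$ pattern. Finally, the direct-sum form of $\rad(\varpi)$ stated in the tables (rather than a non-split extension) follows from the fact that the distinct candidate highest weights lie in pairwise distinct linkage classes under the dot action, so the corresponding $L_G(\lambda)$'s lie in distinct blocks and no non-trivial extension between them can occur.
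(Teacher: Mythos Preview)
Your overall strategy matches the paper's: both use the tilting module $V_G(\varpi_1)\otimes V_G(\mu)$ together with Proposition~\ref{chT(omega)_as_a_sum_of_xi's} to bound the possible composition factors of $V_G(\varpi)$ (each with multiplicity at most one), and then the truncated Jantzen $p$-sum formula to decide which ones actually occur. The paper computes the characteristic-zero character of the tensor product via the embedding $G\hookrightarrow \SL(W)$ rather than invoking a Pieri rule directly, but this is only a cosmetic difference.

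There is, however, a genuine gap in your final step. Your claim that the distinct candidate highest weights ``lie in pairwise distinct linkage classes under the dot action'' is false in general. Whenever two of the candidates $\lambda,\lambda'$ \emph{both} appear as composition factors of $V_G(\varpi)$, each is linked to $\varpi$ under the dot action of the affine Weyl group (by the linkage principle), and hence they are linked to one another; thus $L_G(\lambda)$ and $L_G(\lambda')$ lie in the \emph{same} block, and your block argument gives no information about $\Ext^1_G$ between them. Concretely, for $G$ of type $B_6$, $\varpi=\varpi_1+\varpi_2$, and $p=3$, both $\epsilon_p(3)=1$ and $\epsilon_p(12)=1$, so $L_G(\varpi_3)$ and $L_G(\varpi_1)$ both occur in $\rad(\varpi)$ and are in the same block.

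The paper's argument for the direct-sum structure is different and does work: since $p\neq 2$, the Weyl modules $V_G(\lambda)$ for each candidate $\lambda\in\Lambda(G)$ are irreducible (Lemmas~\ref{Dimension_of_various_irreducibles_in_type_B} and~\ref{Dimension_of_various_irreducibles_in_type_D}), so $[V_G(\lambda),L_G(\lambda')]=0$ for the smaller candidate $\lambda'\prec\lambda$, and Proposition~\ref{Extensions between two irreducibles} then gives $\Ext^1_G(L_G(\lambda),L_G(\lambda'))=0$. This, not a block separation, is what forces $\rad(\varpi)$ to split as a direct sum.
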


\begin{remarksintro}
The assumption on the characteristic of $K$ in Theorem \ref{Main_Result_1} ensures that Weyl modules for $G$ having highest weights belonging to $\Lambda(G)$ (as defined above) are irreducible, thus allowing us to apply the aforementioned generalization of McNinch's result (see Proposition \ref{chT(omega)_as_a_sum_of_xi's}). Also observe that the case where $G$ is of type $C_n$ $(n\geqslant 3)$ is not treated in this paper. The reason is that if $G$ is of type $C_n$ over $K,$ then Weyl modules  having fundamental weights as highest weights are not necessarily irreducible, this even if $p\neq 2.$ (In fact, there is no bound to the possible number of composition factors for such modules, as $n$ grows \cite{Suprunenko}). In particular, the method employed in this paper requiring of Weyl modules having highest weights $\varpi_i,$ $1\leqslant i\leqslant n,$ to be irreducible, would fail to apply in this context. A similar result for $G$ of type $C_n$  would then require a lot more investigation and would probably lead to a much more complicated table. Finally, observe that a table similar to Tables \ref{Table_B} and \ref{Table_D} in the case where $G=D_n$ $(n\geqslant 3)$ and $\varpi\in \varpi_2+\Lambda(D_n)$ can be found in \cite[Theorem 7.3]{Thesis}. However, since it is incomplete, and since the techniques employed  are identical to the ones introduced here, we decided not to include the result in this paper.
\end{remarksintro}

As seen above, each irreducible module $KG$-module appearing in the second column of Table \ref{Table_B} or Table \ref{Table_D} of Theorem \ref{Main_Result_1} is isomorphic to its corresponding Weyl module since $p\neq 2.$ In particular, the dimensions of those irreducibles are known, and so one can deduce the dimension of each irreducible $KG$-module having highest weight $\varpi$ as in the first column of the aforementioned tables. We record our findings in the form of a  corollary to Theorem \ref{Main_Result_1}. For simplicity purposes, we let $\delta: \mathbb{Z}\times \Z \to \{0,1\}$ denote the standard Kronecker delta, that is, 
$$
\delta_{a,b}=\begin{cases} 1 & \mbox{if $a=b;$}\cr
0 & \mbox{otherwise}.
\end{cases}
$$

\begin{corolintro}\label{Introduction: corollary on the dimension (B)}
Assume $\characteristic K \neq 2,$ and let $G$ be a simply connected, simple algebraic group of type $B_n$ $,n\geqslant 2$ (resp. $D_n$ $,n\geqslant 3$), over $K.$ Also let  $\varpi$ be as in the first column of  Table \textnormal{\ref{Table_of_dimensions_main_result_type_B}} (resp. Table  \textnormal{\ref{Table_of_dimensions_main_result_type_D}}). Then the dimension of $L_G(\varpi)$ is given by the second column of the table.

\begin{table}[h]
\center
\begin{tabular}{cc}
\hline \hline \\[-2ex] 
$\varpi$	&	$\dim L_G(\varpi)$	\\  
\hline \hline  
&   \cr 
$\varpi_1+\varpi_j +\delta_{j,n}\varpi_n$ &	$\begin{pmatrix}
2n+1 \\ j+1 \end{pmatrix} \left( \dfrac{j(2n+3)}{2n-j+2}-\epsilon_p(j+1)-\dfrac{\epsilon_p(2n-j+2)j(j+1)}{(2n-j+2)(2n-j+1)}\right)$	 \cr 
&   \cr
$\varpi_1+\varpi_n$	& 	$2^n(2n-\epsilon_p(2n+1)) $\\
&   \cr 
\hline\cr
\end{tabular}
\caption{Dimension of certain irreducible $KG$-modules  for $G$ of type $B_n$ $(n\geqslant 2).$ Here $1\leqslant j\leqslant n.$}
\label{Table_of_dimensions_main_result_type_B}
\end{table}

\begin{table}[h]
\center
\begin{tabular}{ccc}
\hline \hline \\[-2ex] 
 $\varpi$	&	$\dim L_G(\varpi)$	\\  
\hline \hline \\ [-1.5ex]
& \cr
$\varpi_1+\varpi_j + \delta_{j,n-1}\varpi_n$								&	$\begin{pmatrix}2n \\ j+1\end{pmatrix}	\left( \dfrac{2j(n+1)}{2n-j+1}-\epsilon_p(j+1)-\dfrac{\epsilon_p(2n-j+1)j(j+1)}{(2n-j)(2n-j+1)}\right)$				\cr 
& \cr
 $\varpi_1+\varpi_n$	& 	$2^n(2n-\epsilon_p(2n+1)) $\cr
& \cr
 $\varpi_1+2\varpi_n$	&	$ (n -\epsilon_p(n+1))\begin{pmatrix} 2n \\ n+1 \end{pmatrix}  $\cr & \cr
\hline\cr
\end{tabular}
\caption{Dimension of certain irreducible $KG$-modules  for $G$ of type $D_n$ $(n\geqslant 3).$ Here $1\leqslant j\leqslant n-1.$}
\label{Table_of_dimensions_main_result_type_D}
\end{table}
\end{corolintro}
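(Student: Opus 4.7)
The plan is to derive Corollary \ref{Introduction: corollary on the dimension (B)} as an essentially immediate consequence of Theorem \ref{Main_Result_1} together with the classical Weyl degree formula. Indeed, for each highest weight $\varpi$ in the first column of Tables \ref{Table_B} and \ref{Table_D}, Theorem \ref{Main_Result_1} describes $\rad(\varpi)$ as a direct sum of irreducibles $L_G(\mu)$ with $\mu\in\Lambda(G)$, each appearing with multiplicity $\epsilon_p(\cdot)\in\{0,1\}$. Since $\characteristic K \neq 2$ and every such $\mu$ belongs to $\Lambda(G)$, Lemmas \ref{Dimension_of_various_irreducibles_in_type_B} and \ref{Dimension_of_various_irreducibles_in_type_D} yield $L_G(\mu)\cong V_G(\mu)$, so the dimension of each composition factor of the radical is independent of $p$ and computable by characteristic-zero methods. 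In particular
$$ \dim L_G(\varpi) \;=\; \dim V_G(\varpi) \;-\; \sum_\mu \epsilon_p(\cdot)\,\dim V_G(\mu), $$
and it suffices to evaluate the right-hand side.

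I would then apply Weyl's formula
$$ \dim V_G(\lambda)\;=\;\prod_{\alpha\in\Phi^+}\frac{\langle\lambda+\rho,\alpha\rangle}{\langle\rho,\alpha\rangle} $$
with $\rho=\sum_{i=1}^n\varpi_i$ to each term. The dimensions of the summands in the radical are immediate: for $1\leqslant i\leqslant n-1$ in type $B_n$ (resp.\ $1\leqslant i\leqslant n-2$ in type $D_n$) the module $V_G(\varpi_i)$ is an exterior power of the natural module, of dimension $\binom{2n+1}{i}$ (resp.\ $\binom{2n}{i}$); the weights $2\varpi_n$ in type $B_n$, and $\varpi_{n-1}+\varpi_n$, $2\varpi_n$, $2\varpi_{n-1}$ in type $D_n$, correspond either to the remaining fundamental exterior powers or to their twists and yield $\binom{2n+1}{n}$ or $\binom{2n}{n-1}$; and the spin (resp.\ half-spin) weights give dimensions $2^n$ (resp.\ $2^{n-1}$). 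For the head $V_G(\varpi)$ one unwinds the product formula applied to $\varpi_1+\varpi_j$ (or one of its endpoint variants such as $\varpi_1+2\varpi_n$ or $\varpi_1+\varpi_{n-1}+\varpi_n$), factors out the common binomial $\binom{2n+1}{j+1}$ or $\binom{2n}{j+1}$, and collects the remaining rational factors; matching against the $\epsilon_p$-contributions from the radical then reproduces the tabulated expressions.

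The hard part is not conceptual but combinatorial bookkeeping: the composition of $\rad(\varpi)$ differs between the generic rows of Tables \ref{Table_B} and \ref{Table_D} and their endpoint rows, notably $\varpi_1+\varpi_{n-1}$ and $\varpi_1+2\varpi_n$ in type $B_n$, and $\varpi_1+\varpi_{n-2}$, $\varpi_1+\varpi_{n-1}+\varpi_n$, $\varpi_1+2\varpi_n$ in type $D_n$, so one must verify case by case that the single compact formula in the corollary reproduces each of them. The Kronecker delta $\delta_{j,n}$ (resp.\ $\delta_{j,n-1}$) in the statement is precisely the device that merges the generic weight $\varpi_1+\varpi_j$ for $j<n$ with the endpoint weight $\varpi_1+2\varpi_n$ (resp.\ $\varpi_1+\varpi_{n-1}+\varpi_n$) into a single row, and checking that the two $\epsilon_p$-corrections appearing in the endpoint rows of Theorem \ref{Main_Result_1} match the $j=n$ (resp.\ $j=n-1$) specialisation of the uniform rational factor in Tables \ref{Table_of_dimensions_main_result_type_B} and \ref{Table_of_dimensions_main_result_type_D} is the only nonroutine piece of the verification.
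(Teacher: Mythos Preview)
Your proposal is correct and follows exactly the approach the paper itself takes: the paper states that the proof ``simply consists in computing the dimension of $V_G(\varpi)$ and $\rad(\varpi)$'' using Theorem~\ref{Main_Result_1} and Weyl's formula (via Lemmas~\ref{Dimension_of_various_irreducibles_in_type_B} and~\ref{Dimension_of_various_irreducibles_in_type_D}), and explicitly leaves the details to the reader. If anything, your outline is more detailed than what the paper provides.
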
 

Let $W$ be a finite-dimensional $K$-space of dimension at least $5,$ and let $Y=\SL(W),$ that is, $Y$ is a simply connected, simple algebraic group of type $A_{\dim W-1}$ over $K.$ Fix a Borel subgroup $B_Y$ of $Y,$ containing a maximal torus $T_Y$ of $Y,$ and let $\{\lambda_1,\ldots,\lambda_{\dim W-1}\}$ denote the corresponding fundamental weights, ordered as in \cite{Bourbaki}. Also consider a  maximal, closed, connected subgroup $G=\SO(W)$ of $Y.$ Then $G$ is of type $B_n$ ($n\geqslant 2$) if $\dim W =2n+1,$ and of type $D_n$ ($n\geqslant 3$) if $\dim W=2n.$  Without loss of generality, we suppose that $T,$ $B,$ and hence $\{\varpi_1,\ldots,\varpi_n\},$ are chosen in such a way that $\lambda_i|_T = \varpi_i$ for  $1\leqslant i\leqslant n-2,$ $\lambda_{n-1}|_T=\varpi_{n-1}+\epsilon_2(\dim W)\varpi_n,$  and $\lambda_n|_T = 2\varpi_n.$ 
\vspace{5mm}

If  $\characteristic K \neq 2 $ and if $V$ is an irreducible $KY$-module having highest weight $\lambda_i,$ $1 \leqslant i\leqslant \dim W -1,$ then the restriction of $V$ to $G$ is irreducible as well by \cite[Theorem 1, Table 1 (I$_2,$ I$_3,$ I$_4,$ I$_5$)]{Se}. We thus conclude this paper by giving a description of the composition factors of the restriction to $G$ of irreducible $KY$-modules having slightly more complicated highest weights, namely weights of the form $\lambda=\lambda_1+\lambda_j,$ where $1\leqslant j\leqslant \dim W-1.$

\begin{propintro}\label{Restrictions_of_irreducibles_of_type_A}
Assume $\characteristic K \neq 2,$ and let $Y=\SL(W)$ and $G=\SO(W)$ be as above. Also let  $\lambda$ be as in the first column of  Table \textnormal{\ref{Restriction_of_V_Y(lambda)_to_G_of_type_B}} (resp. Table   \textnormal{\ref{Restriction_of_V_Y(lambda)_to_G_of_type_D}}). Then the composition factors of the restriction of $L_Y(\lambda)$ to $G$ is given by the second column of the table. 

\begin{table}[h]
\center
\begin{tabular}{ccc}
\hline \hline \\[-2ex] 
$\lambda$	&	$L_Y(\lambda)|_G$ \\
\hline \hline \\ [-1.5ex]
$\lambda_1+\lambda_j~(1\leqslant j\leqslant n)$ 			& $\varpi~\big/~ \varpi_{j-1} ~\big/~ \varpi_{j-1}^{\epsilon_p(2n-j+2)}$ \cr
$\lambda_1+\lambda_{n+1}$					& $\varpi ~\big/~ 2\varpi_n ~\big/~ 2\varpi_{n}^{ \epsilon_p(n+1)}$			\cr
$\lambda_1+\lambda_{n+2}$					& $\varpi~\big/~ 2\varpi_n ~\big/~ 2\varpi_{n}^{\epsilon_p(n)} $			\cr
$\lambda_1+\lambda_j~(n+3\leqslant j\leqslant 2n)$		& $\varpi~\big/~ \varpi_{2n-j+2}  ~\big/~ \varpi_{2n-j+2}^{ \epsilon_p(2n-j+2)}  $	\cr 
\hline\cr
\end{tabular}
\caption{$KG$-composition factors of certain irreducible $KY$-modules for $Y=\SL(W),$ $G=\SO(W),$ $\dim W=2n+1$ for some $n\geqslant 2.$}
\label{Restriction_of_V_Y(lambda)_to_G_of_type_B}
\end{table} 

\begin{table}[h]
\center
\begin{tabular}{ccc}
\hline \hline \\[-2ex] 
$\lambda$	&	$L_Y(\lambda)|_G$	\\
\hline \hline \\ [-1.5ex]
$\lambda_1+\lambda_j~(1\leqslant j\leqslant n-1)$	& $\varpi ~\big/~  \varpi_{j-1} ~\big/~ \varpi_{j-1}^{\epsilon_p(2n-j+1)} $		\cr
$\lambda_1+\lambda_n$	& 	$\varpi ~\big/~ \varpi_1+2\varpi_{n-1} ~\big/~ \varpi_1 + \varpi_n ~\big/~  (\varpi_1 + \varpi_n)^{\epsilon_p(n+1)} $ \cr
$\lambda_1+\lambda_{n+1}$					& $
\varpi ~\big/~ 2\varpi_{n-1}^{1+\epsilon_p(n)} ~\big/~  2\varpi_{n}^{1+\epsilon_p(n) }	~\big/~ $			\cr
$\lambda_1+\lambda_{n+2}$					& $\varpi ~\big/~ \varpi_{n-1} +\varpi_n	 ~\big/~ (\varpi_{n-1} +\varpi_n)^{\epsilon_p(n-1)} $			\cr
$\lambda_1+\lambda_j~(n+3\leqslant j\leqslant 2n-1)$		& $\varpi ~\big/~ \varpi_{2n-j+1}   ~\big/~ \varpi_{2n-j+1} ^{\epsilon_p(2n-j+1)} $	\cr 
\hline\cr
\end{tabular}
\caption{$KG$-composition factors of certain irreducible $KY$-modules for $Y=\SL(W),$ $G=\SO(W),$ $\dim W=2n$ for some $n\geqslant 3.$}
\label{Restriction_of_V_Y(lambda)_to_G_of_type_D}
\end{table} 
\end{propintro}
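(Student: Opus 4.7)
Write $\lambda=\lambda_1+\lambda_j$, $\mu=\lambda_j|_T$, $\varpi=\lambda|_T=\varpi_1+\mu$, and $m=\dim W$. The plan is to combine three ingredients: a standard short exact sequence on the $Y$-side, the orthogonal Pieri rule on the $G$-side, and the structure of $V_G(\varpi)$ from Theorem \ref{Main_Result_1}; the argument will then go through uniformly for both types.

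First, since $L_Y(\lambda_i)=V_Y(\lambda_i)=\Lambda^iW$ in every characteristic, the wedge product $W\otimes\Lambda^jW\twoheadrightarrow\Lambda^{j+1}W$ fits into a short exact sequence of $Y$-modules
$$
0 \longrightarrow L_Y(\lambda_{j+1}) \longrightarrow L_Y(\lambda_1)\otimes L_Y(\lambda_j) \longrightarrow V_Y(\lambda) \longrightarrow 0.
$$
By the Seitz theorem cited immediately before the statement of the proposition, $L_Y(\lambda_i)|_G = L_G(\lambda_i|_T)$ is irreducible for each $1\leq i\leq m-1$, so restriction gives
$$
[V_Y(\lambda)|_G] = [L_G(\varpi_1)\otimes L_G(\mu)] - [L_G(\lambda_{j+1}|_T)]
$$
in the Grothendieck group of $KG$-modules. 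The tensor product $L_G(\varpi_1)\otimes L_G(\mu)$ is accessible via the orthogonal Pieri rule: since $L_G(\varpi_1) = V_G(\varpi_1)$ and $L_G(\mu) = V_G(\mu)$ under our standing hypothesis $p\neq 2$, their product carries a Weyl filtration with factors $V_G(\varpi)$, $V_G(\lambda_{j-1}|_T)$, and $V_G(\lambda_{j+1}|_T)$. Expanding $V_G(\varpi)$ via Theorem \ref{Main_Result_1} and observing that all ``neighboring'' Weyl modules coincide with the corresponding irreducibles yields an explicit decomposition of $[L_G(\varpi_1)\otimes L_G(\mu)]$ as a sum of irreducible $KG$-classes.

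To move from $[V_Y(\lambda)|_G]$ to $[L_Y(\lambda)|_G]$, the final ingredient is the structure of the $Y$-Weyl module $V_Y(\lambda)$ itself. The hook partition $(2,1^{j-1})$ dominates only $(1^{j+1})$ among partitions of $j+1$, so the sole dominant weight of $V_Y(\lambda)$ strictly below $\lambda$ is $\lambda_{j+1}$; hence
$$
[V_Y(\lambda)] = [L_Y(\lambda)] + \epsilon_p(j+1)\,[L_Y(\lambda_{j+1})].
$$
The multiplicity $\epsilon_p(j+1)$ follows from the observation that the antisymmetrization $\Lambda^{j+1}W\to W\otimes\Lambda^jW$ composed with the wedge map equals $(j+1)\cdot\mathrm{id}$, so that the short exact sequence above splits precisely when $p\nmid j+1$; alternatively, this is a short consequence of the Jantzen $p$-sum formula applied to $V_Y(\lambda)$. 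Combining the identities above, the $\epsilon_p(j+1)$-contribution to $[L_G(\lambda_{j+1}|_T)]$ produced by the $G$-side tensor decomposition cancels exactly against the one produced by $V_Y(\lambda)$, and what remains matches the entries of Tables \ref{Restriction_of_V_Y(lambda)_to_G_of_type_B} and \ref{Restriction_of_V_Y(lambda)_to_G_of_type_D} row by row.

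The main obstacle is the boundary case analysis for $j\in\{n-1,n,n+1,n+2\}$ in type $B$ and $j\in\{n-2,n-1,n,n+1,n+2\}$ in type $D$, where the restrictions $\lambda_{j\pm 1}|_T$ and the orthogonal Pieri decomposition both take modified forms, producing weights such as $2\varpi_n$ or $\varpi_{n-1}+\varpi_n$ in place of simple fundamentals. Each of these ranges has to be matched separately with the appropriate row of the tables, although the underlying cancellation of the $\epsilon_p(j+1)$-contributions works in exactly the same way throughout.
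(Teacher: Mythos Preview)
Your approach is correct and coincides with the paper's: both arguments combine the decomposition $[V_Y(\lambda)]=[L_Y(\lambda)]+\epsilon_p(j+1)[L_Y(\lambda_{j+1})]$ (the paper's Lemma~\ref{Tensor_products_in_type_A}), the Seitz restriction of fundamental $Y$-irreducibles, and the expansion of $\chi(\varpi)$ via Theorem~\ref{Main_Result_1}. The only organisational difference is that where the paper computes $\ch V_Y(\lambda)|_G$ directly (Propositions~\ref{Type_B:restriction_in_characteristic_zero_0<j<n+1} and~\ref{[D]character_of_V_Y(lambda)_G_for_D}), you obtain the same expression by passing through the tensor product $L_G(\varpi_1)\otimes L_G(\mu)$ and the orthogonal Pieri rule; the paper in fact proves that Pieri decomposition (Lemmas~\ref{chT(omega)_for_B}, \ref{Another_chT(omega)_for_B}, \ref{chT(omega)_for_D}--\ref{Another_other_chT(omega)_for_D}) \emph{from} the restriction results, so the two routes are formally equivalent.

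One small correction: your displayed short exact sequence is written the wrong way round. The wedge map $W\otimes\Lambda^jW\twoheadrightarrow\Lambda^{j+1}W$ that you invoke has $\Lambda^{j+1}W=L_Y(\lambda_{j+1})$ as its \emph{cokernel}, and the kernel is $V_Y(\lambda)$; so the sequence should read
\[
0\longrightarrow V_Y(\lambda)\longrightarrow L_Y(\lambda_1)\otimes L_Y(\lambda_j)\longrightarrow L_Y(\lambda_{j+1})\longrightarrow 0.
\]
This does not affect your argument, since you only use the resulting identity in the Grothendieck group, and your splitting observation (that the antisymmetrisation section exists precisely when $p\nmid j+1$) remains valid with the arrows corrected.
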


\section{Preliminaries}\label{[Section] Preliminaries}     

Let $K$ be an algebraically closed field having characteristic $p\geqslant 0.$ In this section, we recall some elementary properties about representations of simple algebraic groups over $K.$  Unless specified otherwise, most of the results presented here can be found in \cite{Bourbaki}, \cite{Humphreys2}, or \cite{Humphreys1}.

\subsection{Notation}\label{Notation}     

We first fix some notation that will be used for the rest of the paper. Let $G$ be a simply connected, simple algebraic group over $K.$ Also fix a Borel subgroup $B=UT$ of $G,$ where $T$ is a maximal torus of $G$ and $U$ denotes the unipotent radical of $B.$ Let $n=\rank G = \dim T$ and let $\Pi=\{\alpha_1,\ldots,\alpha_n\}$ be a corresponding base of the root system $\Phi=\Phi^+\sqcup \Phi^-$ of $G,$ where $\Phi^+$ and $\Phi^-$ denote the sets of positive and negative roots of $G,$ respectively. Let
$$
X(T)=\Hom (T,K^*) 
$$
denote the character group of $T,$ and set $X(T)_{\mathbb{R}} = X(T) \otimes_{\Z} \R.$ Also, for $\alpha\in \Phi,$ define the reflection $s_{\alpha}:X(T)_{\R} \to X(T)_{\R}$ relative to $\alpha$ by $s_{\alpha}(\lambda)=\lambda-\langle \lambda,\alpha\rangle\alpha,$ where $\langle \lambda, \alpha\rangle = 2(\lambda,\alpha) (\alpha,\alpha)^{-1}$ for   $\lambda,\alpha$ with $\alpha \neq 0,$ and $(-,-)$ denotes the usual  inner product on $X(T)_\R.$ Denote  by $\mathscr{W}$ the finite group $\langle s_{\alpha_r}:1\leqslant r\leqslant n\rangle,$ called the \emph{Weyl group} of $G.$ Recall the existence of a partial ordering on $X(T)_{\R},$ defined by $\mu\preccurlyeq \lambda$ if and only if $\lambda-\mu \in \Gamma,$ where $\Gamma$ denotes the monoid of $\mathbb{Z}_{\geqslant 0}$-linear combinations of simple roots. (We also  write $\mu\prec \lambda$ to indicate that $\mu\preccurlyeq \lambda$ and  $\mu \neq \lambda.$) In addition, let $\{\lambda_1,\ldots,\lambda_n\}$ be the set of fundamental weights for $T$ corresponding to our choice of base $\Pi,$ that is $\langle \lambda_i,\alpha_j\rangle =\delta_{ij}$ for every $1\leqslant i,j\leqslant n.$ Set
$$
X^+(T)=\{\lambda\in X(T): \langle \lambda,\alpha_r\rangle \geqslant 0 \mbox{ for every } 1\leqslant r\leqslant n\}
$$
and call a character $\lambda \in X^+(T)$ a \emph{dominant character}. Every such character can be written as a $\Z_{\geqslant 0}$-linear combination  $\lambda=\sum_{r=1}^n{a_r\lambda_r},$ where $a_1,\ldots,a_n \in \Z_{\geqslant 0}.$  


\subsection{Rational representations}     	
\label{Rational representations}     		

In this section, we recall some elementary properties of rational modules for  semisimple algebraic groups, starting by investigating weights and their multiplicities. Unless specified otherwise, the results recorded here can be found in \cite[Chapter XI, Section 31]{Humphreys2}. Let $V$ denote a finite-dimensional, rational $KG$-module. Then  $V$ can be decomposed into a direct sum of $KT$-modules 
$$
V= \bigoplus_{\mu\in X(T)} V_\mu,
$$
where $V_\mu=\{v\in V: t\cdot v=\mu(t) v \mbox{ for all $t\in T$}\}$  for   $\mu\in X(T).$ A character $\mu \in X(T)$ with $V_\mu\neq 0$ is called a \emph{$T$-weight} of $V,$ and $V_\mu$ is said to be its corresponding \emph{weight space}. The dimension of $V_\mu$ is called the \emph{multiplicity of $\mu$ in $V$} and is denoted by $\m_V(\mu).$ Write $\Lambda(V)$ to denote the set of $T$-weights of $V,$ and  set $\Lambda^+(V)=\Lambda(V)\cap X^+(T).$ Any weight in $\Lambda^+(V)$ is called \emph{dominant}.
\vspace{5mm}

The natural action of the Weyl group $\mathscr{W}$ of $G$ on $X(T)$ induces an action on $\Lambda(V)$ and we say that $\lambda,\mu\in X(T)$ are    $\mathscr{W}$\emph{-conjugate}  if there exists $w\in \mathscr{W}$ such that $w \lambda=\mu.$ It is well-known (see \cite[Section 13.2, Lemma A]{Humphreys1}, for example) that $X^+(T)$ is a fundamental domain for the latter action, that is, each weight in $X(T)$ is $\mathscr{W}$-conjugate to a unique dominant weight. Also, if $\lambda\in X^+(T),$ then $w\lambda \preccurlyeq \lambda$ for every $w\in \mathscr{W}.$ Finally, $\Lambda(V)$ is a union of $\mathscr{W}$-orbits and all weights in a $\mathscr{W}$-orbit have the same multiplicity.
\vspace{5mm}

Now by the Lie-Kolchin Theorem (\cite[Theorem 17.6]{Humphreys2}), there exists $0\neq v^+\in V$ such that $\langle v^+ \rangle_K$ is invariant under the action of $B.$ We call such a vector $v^+$ a \emph{maximal vector in $V$ for $B$}. Note that since $\langle v^+ \rangle_K$ is stabilized by any maximal torus of $B,$ there exists $\lambda \in X(T)$ such that $v^+\in V_{\lambda}.$ In fact, one can show that $\lambda \in X^+(T).$ It is well-known that isomorphism classes of finite-dimensional, irreducible, rational modules are in one-to-one correspondence with dominant weights for $T.$ In this paper, we shall write $L_G(\lambda)$ for the irreducible $KG$-module having highest weight $\lambda,$ obtained as a quotient of the corresponding Weyl module $V_G(\lambda)$ by its unique maximal submodule $\rad (\lambda),$ that is, 
$$
L_G(\lambda)=\bigquotient{V_G(\lambda)}{\rad (\lambda)}.
$$

Clearly each $\lambda\in X(T)$ determines a $1$-dimensional $KT$-module $K_{\lambda}$ on which every $t\in T$ acts as multiplication by $\lambda(t)$ and one observes that we get a $KB$-module structure on $K_{\lambda},$ given by $(ut)x=\lambda(t)x,$ for every $ut\in B$ and $x\in K_\lambda.$  For $r\geqslant 0,$ we let $H^r(-)=H^r(G/B,-)$ denote the $r^{th}$ derived functor  of the left exact functor $\ind _B^G(-) $ and write $H^r(\lambda)=H^r( K_{\lambda}).$ It turns out (see \cite[II, 2.13]{Jantzen}) that if $\lambda \in X^+(T),$ then $H^0(\lambda)\cong V_G(-w_0 \lambda)^*,$ where $w_0$ denotes the longest element in the Weyl group of $G.$ Consequently $L_G(\lambda)\cong L_G(-w_0 \lambda)^*$ is the unique irreducible submodule of $H^0(\lambda) $ and hence is the \emph{socle} of $H^0(\lambda),$ written $\soc (\lambda).$ We refer the reader to \cite[Section 2.1]{Jantzen} for more details.  Finally, the following result makes it easier to compute weight multiplicities in certain situations.

\begin{lem}\label{[Preliminaries] Parabolic embeddings: restriction to Levi subgroup}
Let $V=L_G(\lambda)$ be an irreducible $KG$-module having highest weight $\lambda\in X^+(T).$ Let $J\subset \Pi$ and $\mu\in \Lambda ^+(V)$ be such that $\mu=\lambda-\sum_{\alpha \in J}{c_{\alpha} \alpha}.$ Also write $H=\langle U_{\pm \alpha} : \alpha \in J \rangle .$ Then  $\m_V(\mu)=\m_{V'}(\mu'),$ where $\mu' =\mu|_{T_H},$ $V'=L_H (\lambda|_{T_H}).$
\end{lem}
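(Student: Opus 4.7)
The plan is to reduce to the analogous equality at the level of Weyl modules — namely $\m_{V_G(\lambda)}(\mu) = \m_{V_H(\lambda|_{T_H})}(\mu|_{T_H})$ — and then to descend to the irreducible quotients by comparing the intersections of the respective maximal submodules with the $\mu$-weight space. Throughout, the effective hypothesis is that $\lambda-\mu$ lies in $\Z_{\geqslant 0}J$, which localises everything to the root subsystem $\Phi_J := \Phi\cap \Z J$.

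For the Weyl-module reduction, fix a highest weight vector $v^+$ of $V_G(\lambda)$. By the PBW theorem applied to the Kostant $\Z$-form of $\operatorname{Dist}(G)$, the weight space $V_G(\lambda)_\mu$ is spanned by divided-power monomials $\prod_{\beta \in \Phi^+} y_{-\beta}^{(a_\beta)}v^+$ with $\sum_\beta a_\beta\beta = \lambda-\mu$. Since every positive root is a $\Z_{\geqslant 0}$-combination of simple roots and $\lambda-\mu \in \Z_{\geqslant 0} J$, only monomials with $a_\beta = 0$ for all $\beta \in \Phi^+ \setminus \Phi_J^+$ can contribute. These are precisely the monomials that span $V_H(\lambda|_{T_H})_{\mu|_{T_H}}$ in the analogous fashion for $H$, and a comparison (using for instance Weyl's character formula in characteristic zero, then transporting via the $\Z$-form) shows that the two weight-space dimensions agree.

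To descend to the irreducibles, recall that $v \in V_G(\lambda)_\mu$ belongs to $\rad (\lambda)$ if and only if $v^+$ is not contained in the $G$-submodule generated by $v$, equivalently (for weight reasons) if and only if the projection of the $U$-orbit of $v$ onto $V_G(\lambda)_\lambda = Kv^+$ vanishes. Running the same weight-support argument with raising operators $x_\beta^{(a_\beta)}$ in place of lowering ones, any monomial carrying $v$ into weight $\lambda$ must satisfy $\sum a_\beta\beta = \lambda-\mu$ and therefore involve only $\beta \in \Phi_J^+$. Hence the criterion ``$v \in \rad (\lambda)$'' is already detected by the action of the positive part of $H$ and coincides with the analogous criterion for $V_H(\lambda|_{T_H})$, so taking quotients on both sides yields the desired equality. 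The main technical subtlety will be securing the compatibility of all these identifications in positive characteristic — identifying the $H$-submodule of $V_G(\lambda)$ generated by $v^+$ with $V_H(\lambda|_{T_H})$, and the two radicals on the common $\mu$-weight space — after which the lemma reduces to bookkeeping in the root lattice, the real content being that $\lambda-\mu \in \Z J$ localises both the weight-space description and the radical test to $\Phi_J$ alone.
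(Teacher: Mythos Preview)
Your argument is correct, but the paper takes a much shorter route. Rather than working at the Weyl-module level and then descending to irreducibles by hand, the paper invokes a single structural fact: if $P=P_J$ is the standard parabolic with unipotent radical $Q$ and Levi factor $L$ (so that $H=[L,L]$), then the fixed-point space $L_G(\lambda)^{Q}$ is irreducible as an $L$-module, isomorphic to $L_L(\lambda|_{T_L})$ (this is \cite[II, Proposition 2.11]{Jantzen}). Since $\lambda-\mu\in\Z_{\geqslant 0}J$, the weight space $V_\mu$ is annihilated by every root subgroup $U_\alpha$ with $\alpha\in\Phi^+\setminus\Phi_J^+$, hence $V_\mu\subseteq V^{Q}$, and the equality of multiplicities is immediate.

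What your approach buys is self-containment: you never leave the PBW/contravariant-form picture, and you make explicit why only $\Phi_J$-operators matter both in building the weight space and in testing membership in the radical. The cost is exactly the ``technical subtlety'' you flag at the end --- identifying the $H$-submodule $\operatorname{Dist}(H)v^+\subseteq V_G(\lambda)$ with $V_H(\lambda|_{T_H})$ in positive characteristic, and matching the two radicals on $V_\mu$, amounts to reproving the content of Jantzen's result in this special case. The paper's one-line citation avoids all of that. If you want a cleaner version of your argument, phrase the radical criterion via the contravariant form on $V_G(\lambda)$: the form is determined on $V_\mu\times V_\mu$ by pairings $\langle v, \prod y_\beta^{(a_\beta)}v^+\rangle$ with $\sum a_\beta\beta=\lambda-\mu$, and your support observation forces $\beta\in\Phi_J^+$, which immediately identifies this with the contravariant form for $H$.
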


\begin{proof}
Let $P$ be the standard parabolic subgroup of $G$ corresponding to the subset $J,$ so that $H$ is the derived subgroup of a Levi factor of $P.$ The weight space $V_{\mu}$ lies in the fixed point space of the unipotent radical of $P,$ which is isomorphic to $L_H(\lambda|_{T_H})$ by \cite[Proposition 2.11]{Jantzen}. The result then follows.
\end{proof}

\subsection{Some dimension calculations}     

In this section, $G$ denotes a simply connected, simple algebraic group of rank $n$ over $K$ and $V=L_G(\lambda)$ an irreducible $KG$-module having $p$-restricted highest weight $\lambda\in X^+(T).$ In general, the dimension of $V$ is unknown, or at least there is no known formula holding for $\lambda$ arbitrary. Nevertheless, the dimension of $V_G(\lambda)$ is given by the well-known \emph{Weyl's dimension formula} (see \cite[Section 24.3]{Humphreys1}, for instance). The following result consists in a slightly modified version of the latter formula, which allows one to compute the dimension of a given Weyl module recursively. The proof, being straightforward, is omitted here.

\begin{thm}[Weyl's degree formula]\label{Weyl's formula}  
Set $\Phi^+_{1}=\left\{\alpha=\sum_{r=1}^n{a_r\alpha_r} \in \Phi^+:a_1>0\right\} $ and let $L$ denote a Levi subgroup of $G$ corresponding to the simple roots $\alpha_2,\ldots,\alpha_n.$  Then 
$$
\dim V_G(\lambda)	= \left(\prod_{\alpha \in \Phi^+_{1}}{\frac{\langle \lambda+\rho,\alpha\rangle}{\langle \rho,\alpha\rangle}}\right)\dim V_{L}(\lambda|_{T\cap L}).
$$
\end{thm}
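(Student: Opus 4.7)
The plan is to derive the stated identity directly from the classical Weyl dimension formula applied twice: once to $G$ itself, and once to the Levi subgroup $L.$ The key observation is that the product over $\Phi^+$ appearing in the classical formula splits cleanly into the product over $\Phi^+_1$ (which is what we want to isolate) and the product over $\Phi^+_L:=\Phi^+\setminus \Phi^+_1,$ the set of positive roots of $L.$

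First I would recall the standard Weyl dimension formula
$$
\dim V_G(\lambda) = \prod_{\alpha \in \Phi^+} \frac{\langle \lambda+\rho,\alpha\rangle}{\langle \rho,\alpha\rangle},
$$
with $\rho = \sum_{r=1}^n \lambda_r$ the half-sum of positive roots, and observe that a positive root $\alpha = \sum_r a_r \alpha_r$ either has $a_1>0,$ in which case $\alpha \in \Phi^+_1,$ or $a_1=0,$ in which case $\alpha$ lies in the span of $\alpha_2,\ldots,\alpha_n$ and therefore belongs to $\Phi^+_L.$ This yields the disjoint decomposition $\Phi^+ = \Phi^+_1 \sqcup \Phi^+_L,$ which factorizes the Weyl product into the $\Phi^+_1$-part (which already appears in the statement) and an $\Phi^+_L$-part that we must identify with $\dim V_L(\lambda|_{T\cap L}).$

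Next I would verify that for every $\alpha\in \Phi^+_L,$ the pairings $\langle \rho,\alpha\rangle$ and $\langle \lambda+\rho,\alpha\rangle$ may be replaced by the analogous pairings computed inside $L.$ Writing $\rho_L$ for the half-sum of positive roots of $L,$ we have $\langle \rho,\alpha_i\rangle = 1 = \langle \rho_L, \alpha_i\rangle$ for $2\leqslant i\leqslant n;$ expanding any $\alpha \in \Phi^+_L$ as a $\Z_{\geqslant 0}$-combination of $\alpha_2,\ldots,\alpha_n$ and using linearity of $\langle -,\alpha\rangle = 2(-,\alpha)/(\alpha,\alpha)$ in the first slot then gives $\langle \rho,\alpha\rangle = \langle \rho_L,\alpha\rangle.$ Likewise, since $\alpha$ lies in the span of the simple roots of $L,$ restricting $\lambda$ to $T\cap L$ does not affect the pairing with $\alpha,$ hence $\langle \lambda+\rho,\alpha\rangle = \langle \lambda|_{T\cap L}+\rho_L,\alpha\rangle.$

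Finally, applying Weyl's dimension formula inside the reductive group $L$ (whose derived group is semisimple and whose central torus acts trivially on the pairings considered) gives
$$
\prod_{\alpha\in \Phi^+_L}\frac{\langle \lambda|_{T\cap L}+\rho_L,\alpha\rangle}{\langle \rho_L,\alpha\rangle} = \dim V_L(\lambda|_{T\cap L}),
$$
which combined with the factorization above yields the stated identity. The only subtlety I anticipate is keeping track of the distinction between the pairing on $X(T)_\R$ and the corresponding pairing on $X(T\cap L)_\R;$ this is resolved by the fact that the inner product $(-,-)$ is $\mathscr{W}$-invariant and its restriction to the root subspace spanned by $\alpha_2,\ldots,\alpha_n$ agrees with the canonical inner product used to compute $\dim V_L(\lambda|_{T\cap L}).$ Once this identification is made, no further calculation is required.
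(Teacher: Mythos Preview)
Your argument is correct and is exactly the straightforward derivation the paper has in mind; indeed, the paper omits the proof entirely, stating only that it ``being straightforward, is omitted here.'' One small wording issue: when showing $\langle \rho,\alpha\rangle = \langle \rho_L,\alpha\rangle$ for $\alpha\in\Phi^+_L$, you expand $\alpha$ while citing linearity in the \emph{first} slot of $\langle -,\alpha\rangle$; it is cleaner to observe that $(\rho-\rho_L,\alpha_i)=0$ for $i\geqslant 2$ and then use bilinearity of the inner product $(-,-)$ in the second variable.
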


We now record some information on the dimension of various irreducible $KG$-modules for $G$ of type $A_n$ $(n\geqslant 1),$ $B_n$ ($n\geqslant 2$), and $D_n$ ($n\geqslant 3$) over $K,$ starting by dealing with the former case. We say that a dominant $T$-weight  $\lambda$ is \emph{$p$-restricted} if either $p=0,$ or  if $0 \leqslant \langle \lambda, \alpha \rangle <p $ for $\alpha\in \Pi.$ Also, for $\ell \in \Z_{\geqslant 0}$ a prime, we let $\epsilon_\ell:\Z_{\geqslant 0} \rightarrow \{0,1\}$ be the map defined by
$$
\epsilon_\ell(z)=
\begin{cases}
1 & \mbox{ if $\ell$ divides $z;$} \cr
0 & \mbox{ otherwise.}
\end{cases}
$$

\begin{lem}\label{Dimension_of_various_irreducibles_in_type_A}
Let $G$ be a simple algebraic group of type $A_n$ $(n\geqslant 2)$ over $K $ and consider an irreducible $KG$-module $V=L_G(\lambda)$ having $p$-restricted highest weight $\lambda \in X^+(T).$ Then the following assertions hold.
\begin{enumerate}
\item \label{Various_dimensions_in_type_A_1} If $\lambda=a\lambda_1$ for some $a \in \Z_{\geqslant 1},$ then $V =V_G(\lambda) \cong \SymP^a W,$ where $\SymP^a W$ denotes the $a^{th}$ symmetric power of the natural $KG$-module $W.$ 
\item \label{Various_dimensions_in_type_A_2} If $\lambda=\lambda_i$ for some $1\leqslant i\leqslant n,$ then $V=V_G(\lambda)\cong \Lambda ^i W,$ where $\Lambda^i W$ denotes the $i^{th}$ exterior power of the natural $KG$-module $W.$ 
\item \label{Various_dimensions_in_type_A_3} If $\lambda= \lambda_1+\lambda_j$ for some $2\leqslant j\leqslant n,$ then $V=V_G(\lambda)$ if and only if $p\nmid j+1.$ 
\end{enumerate}
Furthermore, if $\lambda$ is as in \textnormal{\ref{Various_dimensions_in_type_A_1}}, \textnormal{\ref{Various_dimensions_in_type_A_2}}, or \textnormal{\ref{Various_dimensions_in_type_A_3}} above, then the dimension of $V$ is given by the second column of Table \textnormal{\ref{Various_dimensions_in_type_A}}.
\begin{table}[h]
\center
\begin{tabular}{ccc}
\hline \hline \\[-2ex] 
 			 	$\lambda$		&	$\dim L_G(\lambda)$ \\  
\hline \hline \\ [-1.5ex]
 	$a\lambda_1$	 $(a\geqslant 1)$	& $\begin{pmatrix} a+n \\ a \end{pmatrix}$	\\ 
 	& & \cr
 	$\lambda_i $ $(1\leqslant i\leqslant n)$		& $\begin{pmatrix} n+1 \\ i \end{pmatrix}$	\cr
 	 	& & \cr
 	$\lambda_1+\lambda_j$ $(2\leqslant j\leqslant n)$		& $ j \begin{pmatrix} n+2 \\ j+1 \end{pmatrix} -\epsilon_p(j+1) \begin{pmatrix} n+1 \\ j+1 \end{pmatrix} $ \\[0.4cm]
\hline\cr
\end{tabular}
\caption{Dimension of certain irreducible modules for $G$ of type $A_n $ $(n\geqslant 2).$}
\label{Various_dimensions_in_type_A}
\end{table}
\end{lem}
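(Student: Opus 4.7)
Parts (1) and (2) are classical. For (1), the symmetric power $\SymP^a W$ is generated by $v_+^a$ (for $v_+ \in W$ a maximal vector) as a maximal vector of weight $a\lambda_1$, hence realizes the Weyl module $V_G(a\lambda_1)$; for type $A_n$, irreducibility whenever $a < p$ is standard, following for instance by dualizing to the induced module $H^0(a\lambda_n)$ and using that no dominant weight linked to $a\lambda_1$ in the $p$-restricted region lies strictly below it. Part (2) is immediate since $\lambda_i$ is minuscule: all weights of $\Lambda^i W = V_G(\lambda_i)$ lie in one $\mathscr{W}$-orbit, each with multiplicity one, forcing irreducibility for every $p$. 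The dimension formulas in the first two rows of Table \ref{Various_dimensions_in_type_A} then follow by counting monomials of degree $a$ in $n+1$ variables, and $i$-subsets of a set of size $n+1$, respectively.

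For part (3), the strategy is to locate $V_G(\lambda_1+\lambda_j)$ inside $W \otimes \Lambda^j W = V_G(\lambda_1) \otimes V_G(\lambda_j)$. The Pieri decomposition in characteristic zero reads $\ch(W\otimes\Lambda^j W) = \ch V_G(\lambda_1+\lambda_j) + \ch V_G(\lambda_{j+1})$, and since $W = V_G(\lambda_1) = H^0(\lambda_1)$ and $\Lambda^j W = V_G(\lambda_j) = H^0(\lambda_j)$ are both tilting (being minuscule and thus simultaneously Weyl and induced), their tensor product is tilting by the Donkin--Mathieu theorem and admits a Weyl filtration. Placing the largest weight on top yields a short exact sequence
\[
0 \longrightarrow V_G(\lambda_{j+1}) \longrightarrow W \otimes \Lambda^j W \longrightarrow V_G(\lambda_1+\lambda_j) \longrightarrow 0.
\]
Since $V_G(\lambda_{j+1}) = L_G(\lambda_{j+1})$ by (2), and a comparison of weight multiplicities at $\lambda_{j+1}$ gives $[V_G(\lambda_1+\lambda_j):L_G(\lambda_{j+1})]\leqslant 1$, the Weyl module $V_G(\lambda_1+\lambda_j)$ has at most two composition factors and is irreducible exactly when $L_G(\lambda_{j+1})$ does not appear.

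The main technical obstacle is detecting when this extra composition factor actually arises, and this is where I would apply the Jantzen $p$-sum formula. A direct calculation in the root system of type $A_n$ shows that among all pairs $(\alpha,m)$ with $\alpha \in \Phi^+$ and $m \geqslant 1$ satisfying $0 < mp < \langle \lambda_1+\lambda_j+\rho, \alpha \rangle$, the affine reflection $s_{\alpha, mp}$ sends $\lambda_1+\lambda_j$ to a dominant weight strictly below itself only for $\alpha = \alpha_1+\cdots+\alpha_j$ together with $mp = j+1$; in that case $s_{\alpha, mp}\cdot(\lambda_1+\lambda_j) = \lambda_{j+1}$. Hence the Jantzen sum vanishes unless $p \mid j+1$, and in that case it contributes a positive multiple of $\ch V_G(\lambda_{j+1})$, forcing $L_G(\lambda_{j+1})$ to appear as the unique extra composition factor. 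The dimension formula in Table \ref{Various_dimensions_in_type_A} then follows by combining Weyl's degree formula (Theorem \ref{Weyl's formula}), which yields $\dim V_G(\lambda_1+\lambda_j) = j\binom{n+2}{j+1}$, with subtraction of $\dim L_G(\lambda_{j+1}) = \binom{n+1}{j+1}$ whenever $p\mid j+1$.
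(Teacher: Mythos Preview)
Your arguments for parts (1) and (2) agree with the paper's: the paper cites \cite[Lemma 1.14]{Se} for (1) and observes, as you do, that $\lambda_i$ is the unique dominant weight of $V_G(\lambda_i)$ for (2).

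For part (3) you take a genuinely different route. The paper simply observes that $\Lambda^+(\lambda)=\{\lambda,\lambda_{j+1}\}$ and then invokes \cite[Proposition 8.6]{Se} to decide when $\lambda_{j+1}$ occurs. You instead set up the tilting module $W\otimes\Lambda^j W$ and apply the Jantzen $p$-sum formula. This is perfectly in the spirit of the paper, which develops exactly these tools (Propositions \ref{chT(omega)_as_a_sum_of_xi's} and \ref{Insight_on_possible_composition_factors}) and uses them throughout Sections 4 and 5; your argument is more self-contained, while the paper's is shorter by outsourcing to Seitz. The paper itself later records your tilting decomposition as Lemma \ref{Tensor_products_in_type_A}.

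Two points need tightening. First, your short exact sequence is oriented the wrong way: by Proposition \ref{McNinch_Proposition} the Weyl module $V_G(\lambda_1+\lambda_j)$ embeds in the tilting module as a \emph{submodule}, with quotient $V_G(\lambda_{j+1})$; the quotient of $W\otimes\Lambda^j W$ by $L_G(\lambda_{j+1})$ is $H^0(\lambda_1+\lambda_j)$, not $V_G(\lambda_1+\lambda_j)$, when $p\mid j+1$. This does not affect your conclusion, since you only need the composition factors.

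Second, and more substantively, your bound $[V_G(\lambda_1+\lambda_j):L_G(\lambda_{j+1})]\leqslant 1$ does not follow from a ``comparison of weight multiplicities'': the weight $\lambda_{j+1}$ has multiplicity $j$ in $V_G(\lambda_1+\lambda_j)$, which only yields $\leqslant j$. The Jantzen computation alone gives $\leqslant\nu_p(j+1)$, which can exceed $1$. The bound $\leqslant 1$ comes precisely from the tilting argument you have already set up, via Proposition \ref{chT(omega)_as_a_sum_of_xi's} applied to $\ch T(\lambda)=\chi(\lambda)+\chi(\lambda_{j+1})$; you should invoke it explicitly, since the dimension formula in the third row of Table \ref{Various_dimensions_in_type_A} depends on knowing the multiplicity is exactly $\epsilon_p(j+1)$ rather than possibly larger.
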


\begin{proof}
We refer the reader to \cite[Lemma 1.14]{Se} for a proof of \ref{Various_dimensions_in_type_A_1}, and assume $\lambda$ is as in \ref{Various_dimensions_in_type_A_2}, in which case  $\Lambda^+(\lambda)=\{\lambda\}.$ Since $\m_V(\lambda)=\m_{V_G(\lambda)}(\lambda)=1,$ the weight $\lambda$ cannot afford the highest weight of a second composition factor of $V_G(\lambda)$ by \cite{Pr}.  Consequently  $V=V_G(\lambda)$ as desired, and an application of Theorem \ref{Weyl's formula}   yields the assertion on the dimension of $V.$  Now fix a  $K$-basis $\{v_1,\ldots,v_{n+1}\}$ for $W,$ where $v_1\in W_{\lambda_1},$ $v_{r+1}\in W_{\lambda_1-(\alpha_1+\cdots+\alpha_r)}$ for $1\leqslant r\leqslant n.$  Then   
$$
\Lambda^i W =\langle v_{r_1} \wedge \ldots \wedge v_{r_i}:1\leqslant r_1<r_2<\ldots <r_i\leqslant n + 1 \rangle_K
$$ 
by definition, and one easily checks that  $v_1\wedge v_2\wedge \ldots \wedge  v_i$ is a maximal vector of weight $\lambda $  in $\Lambda^i W.$  Hence $\Lambda^i W$ admits a composition factor isomorphic to $V.$ An application of Theorem \ref{Weyl's formula} then yields $ \dim V = \dim \Lambda^iW,$ thus showing that the second assertion holds as well. 
 
Finally, let $\lambda$ be as in \ref{Various_dimensions_in_type_A_3}, and observe that $\Lambda^+(\lambda)=\{\lambda,\lambda_{j+1}\},$ where we adopt the notation $\lambda_{n+1}=0.$ As above, applying \cite{Pr} shows that  $V_G(\lambda)$ is  reducible if and only if $\lambda_{j+1}$ affords the highest weight of a composition factor of $V_G(\lambda).$ An application of \cite[Proposition 8.6]{Se} then shows that the latter assertion holds if and only if $p$ divides $j+1,$ in which case $\m_V(\lambda_{j+1})=\m_{V_G(\lambda) }(\lambda_{j+1})-1.$ Consequently $\dim V=\dim V_G(\lambda)-\epsilon_p(j+1)\dim L_G(\lambda_{j+1}),$ and Theorem \ref{Weyl's formula} together with our knowledge of the dimension of exterior powers allow us to conclude.
\end{proof}

We next prove a result similar to Lemma \ref{Dimension_of_various_irreducibles_in_type_A}, for certain irreducible $KG$-modules in the case where $G$ is of type $B_n$ $(n\geqslant 2)$ and $\characteristic K \neq 2.$

\begin{lem}\label{Dimension_of_various_irreducibles_in_type_B}
Assume $p\neq 2,$ let $G$ be a simple algebraic group of type $B_n$ $(n\geqslant 2)$ over $K,$  and consider an irreducible $KG$-module $V=L_G(\lambda)$ having highest weight $\lambda \in \{ \lambda_i+\delta_{i,n} \lambda_n : 1\leqslant i \leqslant  n\}\cup\{\lambda_n\}.$ Then  $V=V_G(\lambda)$ and the dimension of $V$ is given by the second column of Table \textnormal{\ref{Various_dimensions_in_type_B}}.
\begin{table}[h]
\center
\begin{tabular}{ccc}
\hline \hline \\[-2ex] 
 			 	$\lambda$		&	$\dim L_G(\lambda)$ \\  
\hline \hline \\ [-0.5ex]
$\lambda_i + \delta_{i,n}\lambda_n$ $(1\leqslant i\leqslant n)$		& $\begin{pmatrix} 2n+1 \\ i \end{pmatrix}$	 \cr 
& & \cr 
$ \lambda_n$		& $2^n$ \\[0.5ex]
\hline\cr
\end{tabular}
\caption{Dimension of certain irreducible modules for $G$ of type $B_n$ $(n\geqslant 2).$}
\label{Various_dimensions_in_type_B}
\end{table} 
\end{lem}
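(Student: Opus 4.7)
The plan is to handle the three classes of highest weights $\lambda$ that appear in the statement separately.

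First, for $\lambda = \lambda_n$, the Weyl module $V_G(\lambda_n)$ is the classical spin representation of $\Spin(W)$. Its $T$-weights form a single $\mathscr{W}$-orbit consisting of the $2^n$ half-sum weights obtained from $\lambda_n$ by all possible sign changes, each appearing with multiplicity one (which can be verified directly from the character formula, or from the standard construction of the spin module via Clifford algebras). Since $\lambda_n$ is the only dominant weight of $V_G(\lambda_n)$ and has multiplicity one, $\rad(\lambda_n)=0$, so $V_G(\lambda_n)=L_G(\lambda_n)$ has dimension $2^n$.

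For $\lambda \in \{\lambda_i : 1 \leq i \leq n-1\} \cup \{2\lambda_n\}$, I use the classical realizations $V_G(\lambda_i) \cong \Lambda^i W$ (for $1 \leq i \leq n-1$) and $V_G(2\lambda_n) \cong \Lambda^n W$, where $W$ denotes the $(2n+1)$-dimensional natural $KG$-module. These realizations hold over $\mathbb{Z}$ and persist after base change to $K$, immediately giving $\dim V_G(\lambda) = \binom{2n+1}{i}$; the same formula is also obtained via Weyl's degree formula (Theorem \ref{Weyl's formula}).

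It remains to establish $V_G(\lambda)=L_G(\lambda)$ when $p \neq 2$ for these $\lambda$. The case $\lambda = \lambda_1$ is immediate: $V_G(\lambda_1) = W$ carries a $G$-invariant symmetric bilinear form which is non-degenerate precisely when $p \neq 2$, forcing irreducibility. For $\lambda = \lambda_i$ with $2 \leq i \leq n-1$ and for $\lambda = 2\lambda_n$, I adapt the strategy used in the proof of Lemma \ref{Dimension_of_various_irreducibles_in_type_A}: by Premet's theorem \cite{Pr}, any composition factor of $V_G(\lambda)$ has highest weight $\mu \in \Lambda^+(V_G(\lambda))$, and a direct inspection of the weight structure of $\Lambda^i W$ shows that the dominant weights are exactly $\lambda_i, \lambda_{i-2}, \lambda_{i-4}, \ldots$. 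For each candidate $\mu = \lambda_{i-2k}$ with $k \geq 1$, Lemma \ref{[Preliminaries] Parabolic embeddings: restriction to Levi subgroup} reduces the computation of $\m_{V_G(\lambda)}(\mu)$ to an analogous count in a type-$A$ Levi subgroup, which is handled by Lemma \ref{Dimension_of_various_irreducibles_in_type_A}; a direct dimension check then shows that the $\mu$-weight space of $V_G(\lambda)$ is already accounted for by the contribution of $L_G(\lambda)$, so no composition factor of highest weight $\mu \prec \lambda$ can occur.

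The main obstacle lies in this last step: once $V_G(\lambda)$ is identified with an exterior power, Weyl's formula delivers the dimension with no work, but ruling out every potential smaller composition factor demands either the careful weight-by-weight count indicated above, or an appeal to the Jantzen $p$-sum formula (which vanishes modulo $p$ for $p \neq 2$ on all the relevant weights, yielding $\rad(\lambda)=0$ directly).
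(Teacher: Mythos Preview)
Your treatment of $\lambda=\lambda_n$ is essentially the paper's: the single dominant weight forces irreducibility by \cite{Pr}, and Weyl's formula gives $2^n$.

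For the remaining weights your route diverges from the paper's, and it contains a genuine gap. The paper does \emph{not} analyse $\Lambda^i W$ for the $(2n+1)$-dimensional natural $B_n$-module at all. Instead it embeds $G=\SO_{2n+1}(K)$ into $Y=\SL_{2n+1}(K)$ and invokes \cite[Theorem 1, Table 1 ($\mathrm{I}_2,\mathrm{I}_3$)]{Se}, which asserts that the restriction to $G$ of the irreducible $KY$-module $\Lambda^i(K^{2n+1})$ remains irreducible. This gives $\dim L_G(\lambda_i)=\binom{2n+1}{i}$ directly; since Weyl's formula yields the same value for $\dim V_G(\lambda_i)$, the equality $V_G(\lambda_i)=L_G(\lambda_i)$ follows. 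No weight-by-weight bookkeeping is needed.

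Your alternative argument breaks at the parabolic step. For $\mu=\lambda_{i-2k}$ one computes $\lambda_i-\lambda_{i-2k}=\varepsilon_{i-2k+1}+\cdots+\varepsilon_i$, whose support contains the short simple root $\beta_n$; the Levi subgroup furnished by Lemma \ref{[Preliminaries] Parabolic embeddings: restriction to Levi subgroup} is therefore of type $B_{n-i+2k}$, not type $A$, so Lemma \ref{Dimension_of_various_irreducibles_in_type_A} is inapplicable and the reduction collapses. Note also that Lemma \ref{[Preliminaries] Parabolic embeddings: restriction to Levi subgroup} concerns multiplicities in $L_G(\lambda)$, not in $V_G(\lambda)$, so the formulation needs adjustment even before this. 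A separate issue is the identification $V_G(\lambda_i)\cong\Lambda^i W$: Proposition \ref{Tensor product of modules with good filtrations} only guarantees a Weyl filtration for $\Lambda^i W$ when $i<p$, so some extra work is required for $i\geqslant p$. Your suggested fallback via the Jantzen $p$-sum formula would indeed settle irreducibility, but you have not carried it out; the paper's appeal to Seitz sidesteps all of this in one line.
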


\begin{proof}
First consider a dominant $T$-weight  $\lambda \in \{\lambda_i+\delta_{i,n}\lambda_n: 1\leqslant i \leqslant  n\},$ and embed $G$ in a simply connected, simple algebraic group $Y$ of type $A_{2n}$ over $K$ in the usual way. (Observe that this forces $G=\SO_{2n+1}(K),$ that is, $G$ is not simply connected. However, the proof does not rely on $G$ being simply connected and so the argument remains valid.) By \cite[Theorem 1, Table 1 (I$_2,$ I$_3$)]{Se}, the irreducible module $L_G(\lambda )$ is isomorphic to the restriction to $G$ of a suitable exterior power of the natural module for $Y.$ Using this observation together with Lemma \ref{Dimension_of_various_irreducibles_in_type_A}, one  deduces the desired assertions on $V$ in the situation where $\lambda$ is as in the first row of the table. Finally, in the case where $\lambda=\lambda_n,$ we get that $\Lambda^+(\lambda)=\{\lambda\}$ and hence $V_G(\lambda)$ is irreducible by \cite{Pr}. The assertion on the dimension of $V$ then immediately follows from Theorem \ref{Weyl's formula}. 
\end{proof}

\begin{remark}
The structure of a Weyl module  $V_G(\lambda)$ with highest weight $\lambda$ as in the statement of Lemma \ref{Dimension_of_various_irreducibles_in_type_B} is more complex in the situation where $\characteristic K =2$ (see \cite{MR2964276}, for instance). In particular $V_G(\lambda)$ is in general not irreducible and hence not tilting (see Definition \ref{definition of tilting module}).
\end{remark}

We next prove a result similar to Lemmas \ref{Dimension_of_various_irreducibles_in_type_A} and \ref{Dimension_of_various_irreducibles_in_type_B} for certain irreducible $KG$-modules in the case where $G$ is of type $D_n$ $(n\geqslant 3)$ and $\characteristic K \neq 2.$

\begin{lem}\label{Dimension_of_various_irreducibles_in_type_D}
Assume $p\neq 2,$ let $G$ be a simple algebraic group of type $D_n$ $(n\geqslant 3)$ over $K,$  and consider an irreducible $KG$-module $V=L_G(\lambda)$ having  highest weight $\lambda \in \{\lambda_i+\delta_{i,n-1}\lambda_n: 1\leqslant i < n\}\cup \{2\lambda_{n-1} \}.$ Then  $V=V_G(\lambda)$ and the dimension of $V$ is given by the second column of Table \textnormal{\ref{Various_dimensions_in_type_D}}.
\begin{table}[h]
\center
\begin{tabular}{ccc}
\hline \hline \\[-2ex] 
 			 	$\lambda$		&	$\dim L_G(\lambda)$ \\  
\hline \hline \\ [-1.5ex]
 	$\lambda_i+\delta_{i,n-1}\lambda_n$		& $\begin{pmatrix} 2n \\ i \end{pmatrix}$	 \cr
 	& & \cr
					 	$\lambda_{n}$		& $2^{n-1}$	\cr 
					 	& & \cr
					 	$2\lambda_{n-1} $		& $\frac{1}{2}\begin{pmatrix} 2n \\ n \end{pmatrix}$ 
					 	\\[0.4cm]
\hline\cr
\end{tabular}
\caption{Dimension of certain irreducible modules for $G$ of type $D_n$ $(n\geqslant 3).$}
\label{Various_dimensions_in_type_D}
\end{table} 
\end{lem}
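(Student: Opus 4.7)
The plan is to follow the same outline as in the proof of Lemma \ref{Dimension_of_various_irreducibles_in_type_B}, splitting into three cases according to which row of the table $\lambda$ lies in. First, for $\lambda = \lambda_i + \delta_{i,n-1}\lambda_n$ with $1\leqslant i<n$, I would embed $G=\SO(W)$ in a simply connected simple algebraic group $Y$ of type $A_{2n-1}$ (so $Y = \SL(W)$ with $\dim W = 2n$) in the usual way. Under this embedding and the convention fixed in the introduction one has $\lambda_i|_T = \varpi_i$ for $1\leqslant i\leqslant n-2$ and $\lambda_{n-1}|_T=\varpi_{n-1}+\varpi_n$, so in each case our weight $\lambda$ is obtained by restricting a fundamental weight of $Y$. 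By \cite[Theorem 1, Table 1 (I$_2$, I$_3$)]{Se}, the irreducible $KY$-module $L_Y(\lambda_i)\cong \Lambda^i W$ stays irreducible on restriction to $G$, so $L_G(\lambda)\cong \Lambda^i W|_G$ has dimension $\binom{2n}{i}$ by Lemma \ref{Dimension_of_various_irreducibles_in_type_A}. Weyl's formula (Theorem \ref{Weyl's formula}) then yields $\dim V_G(\lambda)=\binom{2n}{i}$ as well, forcing $V_G(\lambda)=L_G(\lambda)$.

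For $\lambda=\lambda_n$, I would argue that $\lambda$ is a minuscule weight, so $\Lambda^+(\lambda)=\{\lambda\}$. Since $\m_{V_G(\lambda)}(\lambda)=1$, no dominant weight below $\lambda$ can appear as the highest weight of a composition factor of $V_G(\lambda)$, and by \cite{Pr} we conclude $V_G(\lambda)=L_G(\lambda)$. The dimension $2^{n-1}$ is then obtained directly from Theorem \ref{Weyl's formula}.

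The step that requires a little more care is the case $\lambda=2\lambda_{n-1}$, since $\lambda$ is no longer minuscule (for instance, $\lambda_{n-2}\prec 2\lambda_{n-1}$), and $2\lambda_{n-1}$ is not the restriction of a fundamental weight of $Y$. Here the approach is to again consider the embedding $G<Y=\SL(W)$ with $\dim W=2n$, and to exploit the decomposition of the middle exterior power of the natural $KY$-module, namely $\Lambda^n W|_G\cong L_G(2\varpi_n)\oplus L_G(2\varpi_{n-1})$ (a decomposition which is valid since $p\neq 2$, and whose two summands are irreducible by \cite[Theorem 1, Table 1 (I$_4$, I$_5$)]{Se}). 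Since $\lambda_n|_T=2\varpi_n$ by the convention fixed in the introduction, the summand $L_G(2\varpi_{n-1})$ is the image of the weight-$2\lambda_{n-1}$ component under restriction. Comparing dimensions, $\dim L_G(2\varpi_{n-1}) = \frac{1}{2}\binom{2n}{n}$, and a final application of Theorem \ref{Weyl's formula} shows $V_G(2\varpi_{n-1})=L_G(2\varpi_{n-1})$.

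The hard part will be citing the right entry of Seitz's tables for the half-spin quotients of $\Lambda^n W$, and verifying that the dimension equality obtained from Weyl's formula indeed matches $\frac{1}{2}\binom{2n}{n}$; once this is in place, irreducibility of the Weyl module in each of the three cases is forced by dimension comparison, and the rest of the lemma follows routinely.
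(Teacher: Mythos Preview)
Your outline matches the paper's proof for the first two cases almost exactly, with one citation slip: for $G$ of type $D_n$ inside $Y=\SL_{2n}$, the relevant entries of Seitz's Table~1 are (I$_4$, I$_5$), not (I$_2$, I$_3$) (the latter are the $B_n$ entries you used in Lemma~\ref{Dimension_of_various_irreducibles_in_type_B}).

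For $\lambda=2\lambda_{n-1}$ the paper simply refers to \cite[Lemma~2.3.6]{BGT}, whereas you propose to deduce it from a splitting $\Lambda^n W|_G \cong L_G(2\varpi_n)\oplus L_G(2\varpi_{n-1})$. This is a reasonable alternative, but as written the argument is incomplete: you assert the decomposition without proof, and Seitz's entries (I$_4$, I$_5$) concern $\Lambda^i W$ for $i<n$, not the middle exterior power, so they do not directly give you irreducibility of the half-spin pieces. To make your route self-contained you would need to explain why $\Lambda^n W$ splits into exactly these two pieces when $p\neq 2$ (for instance via the $\pm 1$ eigenspaces of the involution coming from the form), and then check that each piece is irreducible --- which is essentially what the cited result in \cite{BGT} does. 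Also beware of circularity within the paper: the decomposition of $V_Y(\lambda_n)|_G$ appears later as Corollary~\ref{Corollary_on_V(lambda_n)}, and its proof \emph{uses} the present lemma.
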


\begin{proof} 
First consider a dominant $T$-weight  $\lambda \in \{\lambda_i+\delta_{i,n-1}\lambda_n: 1\leqslant i < n\},$ and embed $G$ in a simply connected, simple algebraic group $Y$ of type $A_{2n-1}$ over $K,$ as in the proof of Lemma \ref{Dimension_of_various_irreducibles_in_type_B}. (Again, this yields $G=\SO_{2n}(K).)$ By \cite[Theorem 1, Table 1 (I$_4,$ I$_5$)]{Se}, the irreducible module $L_G( \lambda)$ is isomorphic to the restriction to $G$ of a suitable exterior power of the natural module for $Y.$ Using this observation together with Lemma \ref{Dimension_of_various_irreducibles_in_type_A}, one  checks that the assertions on $V$ hold in this situation. Next assume $\lambda= \lambda_{n},$ in which case $\Lambda^+(\lambda)=\{\lambda\}$ and so $V_G(\lambda)$ is irreducible by \cite{Pr}. Finally, we refer the reader to \cite[Lemma 2.3.6]{BGT} for a proof of the assertions in the situation where $\lambda=2\lambda_{n-1}.$
\end{proof}

\subsection{Formal character and dot action}\label{Formal_character}     

Let $\{e^{\mu}\}_{\mu\in X(T)}$ denote the standard basis of the group ring $\Z[X(T)]$ over $\Z.$ The Weyl group $\mathscr{W}$ of $G$ acts on $\Z[X(T)]$ by $we^{\mu}=e^{w\mu},$ $w\in \mathscr{W},$ $\mu\in X(T),$ and we write $\Z[X(T)]^{\mathscr{W}}$ to denote the set of fixed points. The \emph{formal character} of a given $KG$-module  $V$ is defined by
$$
\ch V = \sum_{\mu\in X(T)}{\m_V(\mu) e^{\mu}} \in \Z[X(T)]^{\mathscr{W}}.
$$

Formal characters are valuable tools to study finite-dimensional, rational modules. Following the ideas in \cite[Section II.5.5]{Jantzen}, we also associate to every $T$-weight $\lambda\in X(T)$ the linear polynomial
$$
\chi(\lambda)=\sum_{r\geqslant 0}(-1)^r\ch H^r(\lambda).
$$
If $\lambda\in X^+(T),$ Kempf's vanishing Theorem \cite[II, 4.5]{Jantzen} shows that $H^r(\lambda)=0$ for $r>0$ and hence $\chi(\lambda)=\ch H^0(\lambda)$ in this case. In addition, recall from \cite[II, 2.13]{Jantzen} that if $\lambda\in X^+(T),$ then $\chi(\lambda)=\ch V_G(\lambda)$ as well. One  shows  (see \cite[II, 5.8]{Jantzen}) that each of $\{\chi(\lambda)\}_{\lambda\in X^+(T)}$ and $\{\ch L_G(\lambda)\}_{\lambda\in X^+(T)}$ forms a $\Z$-basis of $\Z[X(T)]^{\mathscr{W}}.$ 

For a $T$-weight $\mu \in X(T)$ with $\mu \prec \lambda,$  we also introduce a ``truncated'' version of $\chi(\lambda),$ which shall prove useful later on in the paper: 
$$
\chi_\mu(\lambda)= \ch L_G(\lambda)+ \sum_{\substack{ \eta \in X^+(T) \\ \mu \preccurlyeq \eta \prec\lambda}}{[V_G(\lambda),L_G(\eta)]\ch L_G(\eta)}.
$$

Let $\rho$ denote the half-sum of all positive roots in $\Phi,$ or equivalently, the sum of all fundamental weights. The \emph{dot action} of $\mathscr{W}$ on $X(T)$ is given by the formula $w\cdot \lambda=w(\lambda+\rho)-\rho,$ for  $w\in \mathscr{W}$ and $\lambda\in X(T).$
One easily sees that 
$$
\mathscr{D}=\{\lambda\in X(T):\langle \lambda+\rho,\alpha \rangle \geqslant 0 \mbox{ for every }\alpha\in \Phi^+\}
$$
is a fundamental domain for the dot action of $\mathscr{W}$ on $X(T)$:  for every $\mu \in X(T),$ there exist  $w\in \mathscr{W}$ and a unique $\lambda\in \mathscr{D}$ such that $w\cdot \mu =\lambda.$ This observation, together with the next result, provide the necessary tools to compute $\chi(\lambda)$ for any given $\lambda \in X(T),$ without having to consider higher homology groups.  For $w\in \mathscr{W},$ we write $\det (w)$ for the determinant of $w$ as an invertible linear transformation of $X(T)_{\R}.$

\begin{lem}\label{[Preliminaries] The Jantzen p-sum formula: Tools to compute chi(lambda)}
Let $\lambda\in X(T)$ and $w\in \mathscr{W}.$ Then $\chi(w\cdot \lambda)=\det(w)\chi(\lambda).$ Moreover, if $\lambda \in \mathscr{D}$ is not in $X^+(T),$ then $\chi(\lambda)=0.$ 
\end{lem}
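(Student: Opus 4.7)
The plan is to treat the two assertions separately, proving the first one and then deducing the second as an easy consequence.

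For the identity $\chi(w\cdot \lambda)=\det(w)\chi(\lambda),$ I would argue by induction on the length $\ell(w)$ of $w$ in $\mathscr{W}$ with respect to the simple reflections $\{s_{\alpha}:\alpha \in \Pi\}.$ The base case $\ell(w)=0$ is trivial, and the inductive step reduces the problem to showing that
\[
\chi(s_{\alpha}\cdot \lambda)=-\chi(\lambda)
\]
for every simple root $\alpha \in \Pi$ and every $\lambda \in X(T).$ This is the classical alternating property of the Euler characteristic, which one  obtains by considering the minimal parabolic $P_\alpha\supset B$ and analysing the Leray spectral sequence associated to the fibration $G/B \to G/P_\alpha$: on the $\mathbb{P}^1$-fiber $P_\alpha/B$ the Borel-Weil-Bott calculation gives precisely the required sign, and the identity then follows by summing Euler characteristics over $r\geqslant 0.$ Since this is exactly the content of \cite[II, 5.9]{Jantzen}, I would simply invoke that reference rather than reproduce the full computation here.

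For the second assertion, suppose $\lambda \in \mathscr{D}$ but $\lambda \notin X^+(T).$ Then there exists $\alpha \in \Pi$ with $\langle \lambda,\alpha\rangle <0.$ The condition $\lambda \in \mathscr{D}$ gives in particular $\langle \lambda+\rho,\alpha\rangle \geqslant 0,$ that is $\langle \lambda,\alpha\rangle \geqslant -\langle \rho,\alpha\rangle = -1.$ Since $\langle \lambda,\alpha\rangle$ is an integer, we must have $\langle \lambda,\alpha\rangle = -1$ and hence $\langle \lambda+\rho,\alpha\rangle = 0.$ Plugging this into the definition of the dot action yields
\[
s_{\alpha}\cdot \lambda = s_{\alpha}(\lambda+\rho)-\rho = (\lambda+\rho)-\langle \lambda+\rho,\alpha\rangle\alpha - \rho = \lambda.
\]
Applying the first part of the lemma to $w=s_{\alpha}$ then gives $\chi(\lambda)=\chi(s_{\alpha}\cdot \lambda)=\det(s_{\alpha})\chi(\lambda)=-\chi(\lambda),$ so $2\chi(\lambda)=0;$ since $\mathbb{Z}[X(T)]$ is torsion-free, we conclude $\chi(\lambda)=0.$

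The only real obstacle is the first assertion, whose proof requires the Borel-Weil-Bott/Demazure computation on the $P_{\alpha}/B \cong \mathbb{P}^1$ fiber; granting this (as is standard and as recorded in Jantzen's book), the rest of the argument is a short combinatorial observation about fundamental domains for the dot action.
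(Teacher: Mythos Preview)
Your argument is correct. The paper's own proof simply cites \cite[II, 5.9 (1)]{Jantzen} for the first assertion and \cite[II, 5.5]{Jantzen} for the second, so your treatment of the first part is essentially identical. For the second assertion you do slightly more than the paper: rather than pointing to a reference, you deduce the vanishing directly from the first part by observing that a weight on the boundary of $\mathscr{D}$ is fixed by a simple reflection under the dot action. This is the standard way to extract the vanishing from the anti-symmetry, and it makes the lemma self-contained once one grants the Borel--Weil--Bott input from Jantzen.
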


\begin{proof}
The first assertion immediately follows from \cite[II, 5.9 (1)]{Jantzen} and we refer the reader to \cite[II, 5.5]{Jantzen} for a proof of the second.
\end{proof}

\subsection{Filtrations and extensions of modules}     

In this section, we introduce some notation and recall a few basic results concerning filtrations and extensions of $KG$-modules. For such a module $V$ and for $\mu\in X^+(T)$ a dominant weight, we write $[V,L_G(\mu)]$ to denote the number of times the irreducible $L_G(\mu)$ occurs as a composition factor of $V.$ Also, we adopt the notation $V=\mu_1^{m_1}/\mu_2^{m_2}/\ldots/\mu_s^{m_s}$ to indicate that $V$ is a $KG$-module with same composition factors as  $ L_G(\mu_1)^{m_1}\oplus \cdots \oplus L_G(\mu_s)^{m_s},$ where $m_1,\ldots,m_s\in \mathbb{Z}_{>0}.$

\begin{defin}\label{definition of tilting module}
A filtration $V=V^0 \supseteq V^1 \supseteq \ldots \supseteq V^{r}\supseteq V^{r+1}=0$ of $V$ is called a \emph{Weyl filtration} if for every $0\leqslant i\leqslant r,$ there exists a weight $\mu_i\in X^+(T)$ with $V^i/V^{i+1}\cong V_G(\mu_i).$ Similarly, such a filtration is called a \emph{good filtration} if for every $0\leqslant i\leqslant r,$ there exists a weight $\mu_i\in X^+(T)$ with $V^i/V^{i+1}\cong H^0(\mu_i).$ Finally, we call a $KG$-module \emph{tilting} if it admits both a good and a Weyl filtration.
\end{defin}

Modules with filtrations as above behave nicely with respect to tensor products and exterior (respectively, symmetric) powers, as recorded in the following result.


\begin{prop}\label{Tensor product of modules with good filtrations}
If $U,$ $V$ are two $KG$-modules admitting good (respectively, Weyl) filtrations, then $U\otimes V$ also admits a good (respectively, Weyl) filtration. In addition,  if $W$ is a $KG$-module affording a good (respectively, Weyl) filtration, then each of $\SymP ^r W$ and $\Lambda ^r W$ admits a good (respectively, Weyl) filtration as well, for any $1\leqslant r < p.$
\end{prop}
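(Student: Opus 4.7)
The plan is to invoke a known deep theorem for the tensor product assertion, and then reduce the symmetric and exterior power claims to it by exploiting the invertibility of $r!$ in $K$.

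For the tensor product part, the statement that $U \otimes V$ inherits a good filtration from good filtrations on $U$ and $V$ is Mathieu's theorem (see \cite[II, 4.21]{Jantzen}), and the Weyl-filtration version is its dual counterpart (obtainable by applying the usual duality that exchanges Weyl modules with induced modules, or equally well cited from the same source). I would simply appeal to this result, since redoing the argument is far beyond the scope of a preliminary section.

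For the symmetric and exterior power assertions, assume first that $W$ admits a good filtration; the Weyl case is entirely analogous. Iterating the tensor product statement $r-1$ times gives that $W^{\otimes r}$ admits a good filtration. Since $1 \leqslant r < p$ (with the inequality understood to be vacuous when $p=0$), the integer $r!$ is invertible in $K$, so the symmetrizer $e_s = \frac{1}{r!}\sum_{\sigma \in S_r} \sigma$ and antisymmetrizer $e_a = \frac{1}{r!}\sum_{\sigma \in S_r} \mathrm{sgn}(\sigma)\,\sigma$ are well-defined idempotents in the group algebra $K[S_r]$. These idempotents act on $W^{\otimes r}$ by permuting tensor factors, they commute with the $G$-action, and their images are $KG$-isomorphic to $\SymP^r W$ and $\Lambda^r W$ respectively. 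Hence both $\SymP^r W$ and $\Lambda^r W$ arise as $KG$-direct summands of $W^{\otimes r}$.

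It then remains to observe that a $KG$-direct summand of a module with a good (resp. Weyl) filtration inherits such a filtration. This follows from the cohomological characterization $\Ext^1_G(V_G(\mu),V) = 0$ for every $\mu \in X^+(T)$ (\cite[II, 4.16]{Jantzen}, together with its Weyl-filtration analogue), a property which is obviously preserved under passage to $KG$-direct summands. The only substantive obstacle in this strategy is Mathieu's tensor product theorem itself, which is highly nontrivial; every other ingredient is routine once that input is available.
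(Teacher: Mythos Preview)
Your proposal is correct. The paper's own proof is simply a pair of citations: Mathieu (with Donkin for earlier cases) for the tensor product assertion, and \cite[Proposition 2.2.5]{HM} for the symmetric and exterior powers. Your treatment of the first part is identical in spirit (citing Mathieu via Jantzen), while for the second part you supply the standard direct-summand argument that underlies the cited reference rather than merely pointing to it; this is a small gain in self-containment, not a genuinely different route.
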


\begin{proof}
The first general proof of the first assertion was given in \cite{Mathieu}, but it had already been proven in most cases in \cite{Donkin}. We refer to \cite[Proposition 2.2.5]{HM} for a proof of the second assertion.
\end{proof}

For $V_1,V_2$ two $KG$-modules, we identify $\Ext_G ^1 (V_2,V_1)$ with the set of equivalence classes of all short exact sequences $0\rightarrow V_1 \hookrightarrow V \twoheadrightarrow V_2 \rightarrow 0$ of $KG$-modules. To conclude this section, we record a result on the possible extensions between irreducible modules for $G.$

\begin{prop}\label{Extensions between two irreducibles}
Let $\lambda,\mu\in X^+(T),$ with $\mu \prec \lambda,$ and suppose that $[V_G(\lambda),L_G(\mu)]=0.$ Then $\Ext^1 _G(L_G(\lambda),L_G(\mu))=0.$
\end{prop}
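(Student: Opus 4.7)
The plan is to show that any short exact sequence
$$
0 \longrightarrow L_G(\mu) \longrightarrow V \stackrel{\pi}{\longrightarrow} L_G(\lambda) \longrightarrow 0
$$
of $KG$-modules splits, by lifting a highest weight vector of $L_G(\lambda)$ to $V$ and using the universal property of the Weyl module $V_G(\lambda)$ together with the hypothesis $[V_G(\lambda),L_G(\mu)]=0$.

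The first step is to produce, inside $V$, a maximal vector $v^+$ of weight $\lambda$ for $B$. Since every $T$-weight of $V$ is a weight of $L_G(\lambda)$ or of $L_G(\mu)$, and since $\mu \prec \lambda$ forces $\lambda$ not to be a weight of $L_G(\mu)$ (as all weights of $L_G(\mu)$ are $\preccurlyeq \mu$), the weight space $V_\lambda$ is one-dimensional and $\lambda$ is maximal among the weights of $V$ under $\preccurlyeq$. If $v^+\in V_\lambda$ is a nonzero vector, then for any $u \in U$ the difference $u\cdot v^+ - v^+$ lies in the sum of weight spaces of weights strictly $\succ \lambda$, which is $0$; hence $u\cdot v^+ = v^+$ and $\langle v^+\rangle_K\cong K_\lambda$ as a $B$-module. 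Note also that $\pi(v^+)$ is a maximal vector of weight $\lambda$ in $L_G(\lambda)$ and thus is nonzero.

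By the universal property of the Weyl module (recall $V_G(\lambda)$ is generated as a $KG$-module by a maximal vector $w^+$ of weight $\lambda$ and any such generator in any $KG$-module of highest weight $\lambda$ gives a unique $G$-homomorphism out of $V_G(\lambda)$), there exists a homomorphism $\varphi : V_G(\lambda) \to V$ sending $w^+$ to $v^+$. Let $W = \varphi(V_G(\lambda)) \subseteq V$. Then $\pi|_W : W \to L_G(\lambda)$ is surjective since its image contains $\pi(v^+)$ and $L_G(\lambda)$ is generated by this maximal vector. Consider the kernel $W \cap L_G(\mu)$; it is a submodule of the irreducible $L_G(\mu)$, so is either $0$ or all of $L_G(\mu)$.

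The crucial step is to rule out $W \cap L_G(\mu) = L_G(\mu)$: in that case $L_G(\mu)$ would be a composition factor of $W$, hence of $V_G(\lambda)$ (since $W$ is a quotient of $V_G(\lambda)$), contradicting the assumption $[V_G(\lambda), L_G(\mu)] = 0$. Therefore $W \cap L_G(\mu) = 0$, the restriction $\pi|_W : W \to L_G(\lambda)$ is an isomorphism, and $V = L_G(\mu) \oplus W$. This furnishes a splitting and shows $\Ext_G^1(L_G(\lambda), L_G(\mu)) = 0$. The only subtle point is ensuring that $v^+$ is genuinely fixed by $U$ and is of weight $\lambda$ (rather than merely mapping to a maximal vector up to lower weight corrections); this is taken care of by the observation that $\lambda$ is not a weight of $L_G(\mu)$, which is precisely where the hypothesis $\mu \prec \lambda$ is used.
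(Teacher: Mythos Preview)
Your argument is correct. You directly split an arbitrary extension by lifting a highest weight vector and invoking the universal property of $V_G(\lambda)$, then observing that the image of $V_G(\lambda)$ in $V$ cannot contain $L_G(\mu)$ under the hypothesis. Each step is sound; in particular, your justification that $v^+$ is $U$-fixed (using that $\lambda$ is not a weight of $L_G(\mu)$ and is maximal in $V$) is exactly what is needed.

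The paper proceeds differently. It argues by contrapositive and quotes \cite[II, Proposition 2.14]{Jantzen}, which identifies $\Ext^1_G(L_G(\lambda),L_G(\mu))$ with $\Hom_{KG}(\rad(\lambda),L_G(\mu))$; a nonzero element of the latter immediately exhibits $L_G(\mu)$ as a composition factor of $\rad(\lambda)\subseteq V_G(\lambda)$. So the paper's proof is shorter but relies on a black-box cohomological identity, whereas yours is self-contained and effectively reproves the relevant special case of that identity by hand. Both approaches hinge on the same underlying fact---the universality of $V_G(\lambda)$ among highest weight modules of highest weight $\lambda$---but you access it through an explicit splitting rather than through the $\Ext^1\cong\Hom(\rad(\lambda),-)$ formula.
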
 
\begin{proof}
Let $\lambda,\mu \in X^+(T)$ be such that $\Ext^1 _G(L_G(\lambda),L_G(\mu))\neq 0.$ By \cite[II, Proposition 2.14]{Jantzen}, this translates to $\Hom_{KG}(\rad(\lambda),L_G(\mu))\neq 0.$ Consequently, there exists a non-zero surjective morphism of $KG$-modules $\phi:\rad (\lambda) \twoheadrightarrow L_G(\mu),$ so that $\ker \phi$ is maximal in $\rad (\lambda).$  Finding a composition series of $\ker (\phi)$ then yields a composition series for $V_G(\lambda),$ say $ V_G(\lambda) \supseteq \rad (\lambda) \supseteq \ker (\phi) \supseteq V_3\supseteq \ldots \supseteq V_r \supseteq 0.$ As $\rad(\lambda)/\ker (\phi) \cong L_G(\mu),$ we get that $[V_G(\lambda),L_G(\mu)]\neq 0,$ thus completing the proof.
\end{proof}

\section{Main techniques}     

In this section, we introduce two techniques (namely Proposition \ref{chT(omega)_as_a_sum_of_xi's} and Theorem \ref{How_to_determine_contributions} below) that shall be used in order to prove the three main results of the paper. The first result provides us with an upper bound (equal to zero for most dominant weights) for the number of times certain composition factors appear in a given Weyl module for $G.$

\subsection{Extending a result of McNinch}     

Following the idea of \cite{McNinch}, we first investigate pairs $(V,\tau),$ where $V$ is a finite-dimensional, rational $KG$-module and $\tau\in \Lambda^+(V)$ satisfy  a certain set of properties.

\begin{prop}\label{McNinch_Proposition}
Let $V$ be a finite-dimensional, rational $KG$-module, and let $\tau\in \Lambda^+(V)$ be a dominant weight of $V.$ Assume in addition that $V$ is tilting, that $\tau$ is the unique highest weight of $V,$ and that $\m_V(\tau)=1.$ Then there exists $\iota \in \Hom_{KG}(V_G(\tau),V)$ injective and  $\phi \in \Hom_{KG}(V, H^0(\tau))$ surjective. Furthermore, under those hypotheses, we have $\iota (\rad(\tau))\subseteq \ker(\phi).$
\end{prop}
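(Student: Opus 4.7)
The plan is to exploit the tilting structure of $V$ via the classification of indecomposable tilting modules (see, e.g., \cite{Jantzen}). First, I would apply the Krull--Schmidt theorem to decompose $V$ as a direct sum of indecomposable tilting modules $T(\mu),$ each indexed by a dominant weight $\mu\in X^+(T),$ where $T(\mu)$ is characterized by having $\mu$ as its unique highest weight, occurring with multiplicity one. Since $\tau$ is the unique highest weight of $V$ with $\m_V(\tau)=1,$ exactly one indecomposable summand has highest weight $\tau,$ and one can write $V\cong T(\tau)\oplus V',$ where every highest weight $\mu$ of $V'$ satisfies $\mu \prec \tau.$

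Next, I would invoke the standard sandwich structure of $T(\tau)$: there exist a canonical injection $j\colon V_G(\tau)\hookrightarrow T(\tau)$ (arising from a Weyl filtration of $T(\tau)$ in which $V_G(\tau)$ appears as the bottom submodule) and a canonical surjection $q\colon T(\tau)\twoheadrightarrow H^0(\tau)$ (arising from a good filtration in which $H^0(\tau)$ appears as the top quotient). Composing $j$ with the inclusion $T(\tau)\hookrightarrow V$ then yields the desired injective $\iota\in \Hom_{KG}(V_G(\tau),V),$ while composing the projection $V\twoheadrightarrow T(\tau)$ with $q$ yields the desired surjective $\phi\in \Hom_{KG}(V,H^0(\tau)).$

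For the final inclusion, I would compute $\phi\circ \iota=q\circ j.$ This composite is non-zero, as it sends the maximal vector of $V_G(\tau)$ to a non-zero scalar multiple of the maximal vector of $H^0(\tau)$ by construction of $j$ and $q,$ and hence is a non-zero element of the one-dimensional space $\Hom_{KG}(V_G(\tau),H^0(\tau)).$ The latter is spanned by the canonical factorization $V_G(\tau)\twoheadrightarrow L_G(\tau)\hookrightarrow \soc(H^0(\tau)),$ whose kernel is $\rad(\tau).$ Thus $\ker(\phi\circ \iota)=\rad(\tau),$ which immediately gives $\iota(\rad(\tau))\subseteq \ker(\phi).$ The main obstacle in this approach is the appeal to the classification of indecomposable tilting modules and to the sandwich property of $T(\tau)$; both, however, are classical results, after which the rest reduces to a routine manipulation of homomorphism spaces.
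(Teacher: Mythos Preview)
Your proof is correct. Both the existence of $\iota,\phi$ and the final inclusion follow cleanly from the machinery you invoke.

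Your route differs from the paper's in two respects. For the existence of $\iota$ and $\phi$, the paper simply cites \cite[Proposition 4.6.2]{McNinch}; the argument there works directly with a Weyl filtration (respectively, a good filtration) of $V$, reordering so that $V_G(\tau)$ sits at the bottom (respectively, $H^0(\tau)$ at the top), without passing through the Krull--Schmidt decomposition into indecomposable tiltings $T(\mu)$. Your approach via the sandwich $V_G(\tau)\hookrightarrow T(\tau)\twoheadrightarrow H^0(\tau)$ is slightly heavier in that it appeals to the classification of indecomposable tilting modules, but it has the advantage of making the composite $\phi\circ\iota = q\circ j$ transparently non-zero. For the inclusion $\iota(\rad(\tau))\subseteq\ker(\phi)$, the paper argues by hand: setting $N=\ker(\phi)\cap\iota(V_G(\tau))$, it observes that $N\subseteq\rad(\tau)$ (as $\rad(\tau)$ is the unique maximal submodule and $\phi\circ\iota\neq 0$), and then rules out $N\subsetneq\rad(\tau)$ using that $\soc(H^0(\tau))=L_G(\tau)$ while $\tau\notin\Lambda(\rad(\tau))$. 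Your appeal to $\dim\Hom_{KG}(V_G(\tau),H^0(\tau))=1$ packages exactly the same information more efficiently; the two arguments are essentially dual phrasings of the same fact.
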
 

\begin{proof}
We refer the reader to \cite[Proposition 4.6.2]{McNinch} for a proof of the existence of $\iota$  and $\phi$ as in the statement of the proposition. For simplicity, we identify $V_G(\tau)$ with $\iota(V_G(\tau)) $ in the remainder of the proof.  Also write $N=\ker(\phi)\cap V_G(\tau),$ and denote by $\bar{\phi}:V_G(\tau)/N \hookrightarrow H^0(\tau)$ the injective morphism of $KG$-modules induced by $\phi\circ \iota.$ As $\rad (\tau)$ is the unique maximal submodule of $V_G(\tau),$ we have $N\subseteq \rad(\tau),$ and if $N\varsubsetneq \rad(\tau),$ then we get $0\varsubsetneq \bar{\phi}(\rad(\tau)/N) \subseteq \im (\bar{\phi}) \subseteq H^0(\lambda),$ a contradiction with $\soc (H^0(\tau))=L_G(\tau),$ as $\tau \notin \Lambda(\rad (\tau)).$ Therefore $N=\rad(\tau)$ and the proof is complete.
\end{proof}

\begin{prop}\label{chT(omega)_as_a_sum_of_xi's}
Let $V$ be a $KG$-module as in the statement of Proposition \textnormal{ \ref{McNinch_Proposition}}, and let $(a_\mu)_{\mu \in X^+(T)}\subset \mathbb{Z}_{\geqslant 0}$ be such that $\ch V =\chi(\tau) + \sum_{\mu\in X^+(T)}{a_\mu \ch L_G(\mu)}.$ Then for every $\mu\in X^+(T) $ different from $\tau,$  we have 
$$
[V_G(\tau),L_G(\mu)]\leqslant a_\mu.
$$
\end{prop}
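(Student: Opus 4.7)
The plan is to exploit the two maps produced by Proposition \ref{McNinch_Proposition} in order to exhibit two disjoint ``layers'' of $V$, each of which contributes a full copy of every composition factor of $V_G(\tau)$ other than $L_G(\tau)$ itself to $V$. Comparing this lower bound on composition multiplicities with the assumed character decomposition then yields the desired inequality.

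First, I would observe that the hypotheses $\m_V(\tau)=1$ and $\tau$ being the unique highest weight of $V$ force $[V,L_G(\tau)]=1$; comparing with the decomposition $\ch V=\chi(\tau)+\sum_\mu a_\mu\ch L_G(\mu)$ this gives $a_\tau=0$, and for any $\mu\neq\tau$ we obtain
\[
[V,L_G(\mu)]=[V_G(\tau),L_G(\mu)]+a_\mu.
\]
Next, I would examine the composition $\phi\circ \iota:V_G(\tau)\to H^0(\tau)$. Since $\iota(\rad(\tau))\subseteq\ker(\phi)$, this morphism factors through $V_G(\tau)/\rad(\tau)=L_G(\tau)$, so its image is either $0$ or, being an irreducible submodule of $H^0(\tau)$, the socle $L_G(\tau)\subseteq H^0(\tau)$. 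The first case would contribute one copy of $L_G(\tau)$ inside $\ker(\phi)$ and another from the surjection onto $H^0(\tau)$, contradicting $[V,L_G(\tau)]=1$. Hence $\phi(\iota(V_G(\tau)))=L_G(\tau)$.

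From this, composing $\phi$ with the quotient $H^0(\tau)\twoheadrightarrow H^0(\tau)/L_G(\tau)$ gives a map $V\twoheadrightarrow H^0(\tau)/L_G(\tau)$ whose kernel contains $\iota(V_G(\tau))$, and so factors through a surjection
\[
V\big/\iota(V_G(\tau))\twoheadrightarrow H^0(\tau)/L_G(\tau).
\]
Additivity of composition multiplicities along the short exact sequence $0\to \iota(V_G(\tau))\to V\to V/\iota(V_G(\tau))\to 0$, combined with the surjection above, yields, for every $\mu\in X^+(T)$,
\[
[V,L_G(\mu)]\;\geqslant\; [V_G(\tau),L_G(\mu)]+[H^0(\tau)/L_G(\tau),L_G(\mu)].
\]
For $\mu\neq\tau$, the character identity $\ch H^0(\tau)=\chi(\tau)=\ch V_G(\tau)$ from Section \ref{Formal_character} gives $[H^0(\tau)/L_G(\tau),L_G(\mu)]=[V_G(\tau),L_G(\mu)]$. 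Substituting into the previous inequality and using the identity $[V,L_G(\mu)]=[V_G(\tau),L_G(\mu)]+a_\mu$ established at the start yields $a_\mu\geqslant [V_G(\tau),L_G(\mu)]$, as required.

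The only delicate point is step two, namely ruling out the possibility that $\phi\circ\iota$ vanishes. This relies crucially on the multiplicity hypothesis $\m_V(\tau)=1$, which ensures that there is a unique ``$L_G(\tau)$-slot'' in $V$ and that it is already consumed by the quotient onto $H^0(\tau)$; everything else in the argument is then a routine bookkeeping of composition factors using the equality $\ch V_G(\tau)=\ch H^0(\tau)$.
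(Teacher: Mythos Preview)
Your proof is correct, but it takes a slightly more circuitous route than the paper's. The paper works directly with the short exact sequence $0\to\ker(\phi)\hookrightarrow V\twoheadrightarrow H^0(\tau)\to 0$: comparing characters gives $\ch\ker(\phi)=\sum_\mu a_\mu\ch L_G(\mu)$, and since $\iota(\rad(\tau))\subseteq\ker(\phi)$, setting $M=\ker(\phi)/\iota(\rad(\tau))$ immediately yields the \emph{equality} $[V_G(\tau),L_G(\mu)]=a_\mu-[M,L_G(\mu)]$ for $\mu\neq\tau$, whence the inequality. Your approach instead first pins down that $\phi\circ\iota$ has image exactly $L_G(\tau)$ (a step the paper never needs), then passes to the quotient $V/\iota(V_G(\tau))$ and invokes the character identity $\ch H^0(\tau)=\ch V_G(\tau)$ to recover a second copy of the composition factors of $\rad(\tau)$ inside $V$. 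Both arguments rest on the same structural input from Proposition~\ref{McNinch_Proposition}; the paper's version is shorter and, as a bonus, identifies the defect $a_\mu-[V_G(\tau),L_G(\mu)]$ explicitly as the multiplicity $[M,L_G(\mu)]$, whereas your argument only gives the bound.
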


\begin{proof}
Let $\phi:V\twoheadrightarrow H^0(\tau)$ be as in Proposition \ref{McNinch_Proposition}, with $\rad (\tau) \subseteq \ker \phi,$ and write $M=\ker (\phi) / \rad (\tau).$ Also consider the short exact sequence $0\to \ker (\phi) \hookrightarrow V \twoheadrightarrow H^0(\tau)\to 0.$ Then one easily checks that $\chi(\tau)= \ch L_G(\tau) + \sum_{\mu\in  X^+(T)}{a_\mu \ch L_G(\mu)} -\ch M,$ so that  $[V_G(\tau),L_G(\mu)] = a_\mu -[M,L_G(\mu)]\leqslant a_\mu$ as desired. 
\end{proof}

We next illustrate Proposition \ref{chT(omega)_as_a_sum_of_xi's} with a concrete example, which shall prove useful later on in the paper. The result is somehow standard. (An alternative proof can be found in \cite[Lemma 8.6]{Se}, for instance.)

\begin{lem}\label{Tensor_products_in_type_A}
Let $G$ be a simple algebraic group of type $A_n $ $(n\geqslant 1)$ over $K.$ Also fix $1\leqslant j\leqslant n,$ and write $\lambda=\lambda_1+\lambda_j.$  Then $V_G(\lambda_1)\otimes V_G(\lambda_j)$ is tilting, and adopting the notation $\lambda_{n+1}=0,$ we have
$$
\ch V_G(\lambda_1)\otimes V_G(\lambda_j) =\chi(\lambda)+\chi(\lambda_{j+1}).
$$
In addition, if $\mu\in X^+(T)$ affords the highest weight of a composition factor of $V_G(\lambda),$ then $\mu =\lambda$ or $ \lambda_{j+1},$ and $[V_G(\lambda),L_G(\mu)]=1.$ 
\end{lem}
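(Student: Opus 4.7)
The plan is to realize $V := V_G(\lambda_1) \otimes V_G(\lambda_j)$ as a tilting module with a single dominant maximal weight, and then invoke Proposition \ref{chT(omega)_as_a_sum_of_xi's}. First, by parts \ref{Various_dimensions_in_type_A_1} and \ref{Various_dimensions_in_type_A_2} of Lemma \ref{Dimension_of_various_irreducibles_in_type_A}, the Weyl modules $V_G(\lambda_1) \cong W$ and $V_G(\lambda_j) \cong \Lambda^j W$ are irreducible; each therefore coincides with the corresponding dual Weyl module $H^0(\lambda_1),$ respectively $H^0(\lambda_j),$ and is in particular tilting. An application of Proposition \ref{Tensor product of modules with good filtrations} then shows that $V$ admits both a good and a Weyl filtration, hence is tilting.

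Next I would establish the character identity $\ch V = \chi(\lambda) + \chi(\lambda_{j+1}).$ Because $V$ carries a Weyl filtration, one can write $\ch V = \sum_{\eta \in X^+(T)} c_\eta \chi(\eta)$ for a unique tuple of non-negative integers $(c_\eta),$ and these coefficients do not depend on $\characteristic K.$ They can therefore be read off from the classical Pieri rule, which in characteristic zero yields $V(\lambda_1) \otimes V(\lambda_j) \cong V(\lambda) \oplus V(\lambda_{j+1})$ as complex $\mathfrak{sl}_{n+1}$-modules. (When $j = n$ we interpret $\lambda_{n+1}$ as $0,$ so that $\chi(\lambda_{n+1}) = 1.$) A short computation in standard coordinates for the $A_n$-weight lattice gives $\lambda - \lambda_{j+1} = \alpha_1 + \cdots + \alpha_j,$ from which $\lambda_{j+1} \prec \lambda$ and hence $\lambda$ is the unique maximal weight of $V.$ Moreover, $\m_V(\lambda) = 1,$ because the only way to express $\lambda$ as the sum of a $T$-weight of $W$ and a $T$-weight of $\Lambda^j W$ is to pair the highest weight of each factor.

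With these ingredients the pair $(V, \lambda)$ satisfies the hypotheses of Proposition \ref{chT(omega)_as_a_sum_of_xi's}. Observe that $V_G(\lambda_{j+1})$ is irreducible: it is isomorphic to $\Lambda^{j+1} W$ when $j \leqslant n-1$ by Lemma \ref{Dimension_of_various_irreducibles_in_type_A}, and is the trivial $KG$-module when $j = n.$ Consequently $\chi(\lambda_{j+1}) = \ch L_G(\lambda_{j+1}),$ so the character identity rewrites as $\ch V = \chi(\lambda) + \ch L_G(\lambda_{j+1}).$ Proposition \ref{chT(omega)_as_a_sum_of_xi's} then yields $[V_G(\lambda), L_G(\lambda_{j+1})] \leqslant 1$ and $[V_G(\lambda), L_G(\mu)] = 0$ for every $\mu \in X^+(T) \setminus \{\lambda, \lambda_{j+1}\};$ combined with $[V_G(\lambda), L_G(\lambda)] = 1$ this gives the final assertion. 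No serious obstacle is expected: the argument is essentially a direct unpacking of Proposition \ref{chT(omega)_as_a_sum_of_xi's} in a situation where the character decomposition is governed by a classical Pieri-type formula, the only minor subtlety being the boundary case $j = n.$
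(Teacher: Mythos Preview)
Your proof is correct and follows essentially the same route as the paper: irreducibility of $V_G(\lambda_1)$ and $V_G(\lambda_j)$ via Lemma~\ref{Dimension_of_various_irreducibles_in_type_A}, tilting via Proposition~\ref{Tensor product of modules with good filtrations}, a characteristic-zero Pieri/Littlewood--Richardson computation for the character, and then Proposition~\ref{chT(omega)_as_a_sum_of_xi's}. You are in fact slightly more explicit than the paper in noting that $\chi(\lambda_{j+1})=\ch L_G(\lambda_{j+1})$ before invoking Proposition~\ref{chT(omega)_as_a_sum_of_xi's}, which is exactly what that proposition requires.
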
 
\begin{proof}
First observe that each of $V_G(\lambda_1)$ and $V_G(\lambda_j)$ is irreducible by Lemma \ref{Dimension_of_various_irreducibles_in_type_A}, and hence both $KG$-modules are tilting. The first assertion  then follows from Proposition \ref{Tensor product of modules with good filtrations}. Also writing $T(\lambda)$ for $V_G(\lambda_1)\otimes V_G(\lambda_j),$ we observe that $\ch T(\lambda )$ is independent of $p$ and thus we may and shall assume $K$ has characteristic zero in  the remainder of the argument. An application of the Littlewood-Richardson formula \cite[16.4]{James} then yields the desired assertion on the character of $T(\lambda ).$ Finally, as $\lambda$ is the highest weight of $T(\lambda )$ and since $\m_{T(\lambda )}(\lambda)=1,$ an application of Proposition \ref{chT(omega)_as_a_sum_of_xi's} completes the proof. 
\end{proof}

\subsection{A truncated version of the Jantzen $p$-sum formula}\label{Tools}  

In this section, we introduce a few tools which shall be of use in order to better understand the composition factors of a given Weyl module for $G.$ Most of the underlying theory can be found in \cite[II, Sections 4, 5, or 8]{Jantzen}, to which we refer the reader for more details. Let $\rho$ denote the half-sum of all positive roots in $\Phi,$ or equivalently, the sum of all fundamental weights. Also for $\lambda,\mu\in X^+(T)$ such that $\mu \prec \lambda,$ define 
$$
\mbox{d}(\lambda,\mu)=2(\lambda+\rho,\lambda-\mu)-(\lambda-\mu,\lambda-\mu),
$$
as in \cite[Section 6]{Se}. The following corollary to the strong linkage principle \cite{Andersen} provides some insight on the possible composition factors of a given Weyl module for $G,$ in the case where $G$ is not of type $G_2$ and $p>2.$  We refer the reader to \cite[Proposition 6.2]{Se} for a proof.

\begin{prop}\label{Corollary to The linkage principle}
Assume $p>2$ and let $G$ be a simple algebraic group of type different from $G_2.$  Also let $\lambda$ and $\mu$ be as above, and assume the inner product on $\Z \Phi$ is normalized so that long roots have length $1.$ If $\mu $ affords the highest weight of a composition factor of $V_G(\lambda),$ then 
$$
2\textnormal{d}(\lambda,\mu)\in p\Z.
$$
\end{prop}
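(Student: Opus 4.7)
The plan is to derive this directly from Andersen's strong linkage principle together with a short calculation showing that each elementary step in a linkage chain contributes something in $p\Z$ to $2\mathrm{d}(\lambda,\mu)$.

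First, I would recall the key observation that $\mathrm{d}$ admits the clean reformulation
$$
\mathrm{d}(\lambda,\mu) = (\lambda+\rho,\lambda+\rho) - (\mu+\rho,\mu+\rho),
$$
which follows at once from expanding the definition. In particular $\mathrm{d}$ is telescoping: for any chain of weights $\mu = \mu_0 \preccurlyeq \mu_1 \preccurlyeq \cdots \preccurlyeq \mu_r = \lambda,$ one has $\mathrm{d}(\lambda,\mu) = \sum_{i=1}^{r} \mathrm{d}(\mu_i,\mu_{i-1}).$

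Next, I would apply the strong linkage principle \cite{Andersen}: since $L_G(\mu)$ is a composition factor of $V_G(\lambda),$ there exists such a chain in which each consecutive pair is related by an affine reflection in the dot action, i.e.\ $\mu_{i-1} = s_{\beta_i,n_i p}\cdot \mu_i$ for some $\beta_i \in \Phi^+$ and $n_i \in \Z_{\geqslant 1}.$ Unwinding the definition of the dot action, this means $\mu_{i-1} = \mu_i - k_i \beta_i$ where $k_i = \langle \mu_i + \rho,\beta_i^\vee \rangle - n_i p.$

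The third (and only computational) step is the local identity at a single linkage step. Expanding $\mathrm{d}(\mu_i,\mu_{i-1})$ with $\mu_i - \mu_{i-1} = k_i\beta_i$ and using $(\mu_i+\rho,\beta_i) = \tfrac{1}{2}(\beta_i,\beta_i)(k_i + n_i p),$ one obtains after cancellation
$$
\mathrm{d}(\mu_i,\mu_{i-1}) = k_i\, n_i\, p\,(\beta_i,\beta_i).
$$
With the normalization making long roots have squared length $1,$ we have $(\beta_i,\beta_i)\in\{1,\tfrac{1}{2}\}$ outside type $G_2,$ so in particular $2(\beta_i,\beta_i)\in\Z.$ Hence $2\mathrm{d}(\mu_i,\mu_{i-1})\in p\Z$ for each $i,$ and summing gives the result.

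The only real subtlety is bookkeeping: I would need to be careful that the strong linkage principle is stated (and applied) in the dot-action convention, so that the shift by $\rho$ appears in the right place and so that $k_i$ is genuinely an integer. Once the reformulation $\mathrm{d}(\lambda,\mu)=\|\lambda+\rho\|^2-\|\mu+\rho\|^2$ is in hand, the argument is entirely formal. The hypothesis $p>2$ and the exclusion of type $G_2$ are exactly what is needed to absorb the factor $2(\beta_i,\beta_i)$ into $\Z$ under the chosen normalization; if $G$ were of type $G_2,$ short roots would contribute a denominator of $3,$ breaking the divisibility.
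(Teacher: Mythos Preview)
Your argument is correct. The paper does not give its own proof here; it simply refers the reader to \cite[Proposition 6.2]{Se}. Your derivation---rewriting $\mathrm{d}(\lambda,\mu)$ as the difference of squared norms $\|\lambda+\rho\|^2 - \|\mu+\rho\|^2$, invoking the strong linkage chain from \cite{Andersen}, and computing $\mathrm{d}(\mu_i,\mu_{i-1}) = k_i n_i p\,(\beta_i,\beta_i)$ at each step---is the standard way to extract this corollary, and is essentially what one finds in Seitz's argument. The only place one must be careful, as you note, is the integrality of $k_i = \langle \mu_i+\rho,\beta_i^\vee\rangle - n_i p$, which holds because $\mu_i+\rho$ lies in the weight lattice; and the normalization hypothesis, together with the exclusion of $G_2$, is used exactly where you use it, to ensure $2(\beta_i,\beta_i)\in\Z$.
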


For  $r\in \Z$ and $\alpha \in \Phi,$ we denote by $s_{\alpha,r}:X(T)\to X(T)$ the affine reflection on $X(T)$ defined by $
s_{\alpha,r}(\lambda)=s_{\alpha}(\lambda)+r\alpha,\mbox{ }\lambda\in X(T).
$  Also for $\ell$ a prime, set $\mathscr{W}_\ell$ equal to the subgroup of $\Aff (X(T))$ generated by all $s_{\alpha,n\ell},$ with $\alpha\in \Phi,$ $n\in \Z,$ and call $\mathscr{W}_\ell$ the \emph{affine Weyl group} associated to $G$ and $ \ell.$

The dot action introduced in Section \ref{Formal_character} can be extended to an action of $\mathscr{W}_\ell$ on $X(T)$ and $X(T)_{\R}$ in the obvious way, setting $w\cdot \lambda= w(\lambda+\rho)-\rho,$ $w\in \mathscr{W}_\ell,$ $\lambda\in X(T).$ Finally, for $\ell$  a prime number  and $m\in \Z,$ we write $\nu_\ell(m)$ to denote the greatest integer $r$ such that $\ell^r$ divides $m$ (adopting the notation $\nu_0(m)=0$ for every $m\in \Z$).  The following result provides a powerful tool for understanding Weyl modules.

\begin{prop}[The Jantzen $p$-sum formula]\label{Proposition:Sum_formula}
Let $\lambda\in X^+(T)$ be a dominant weight. Then there exists a filtration $V_G(\lambda)=V^0\supsetneq V^1 \supseteq \ldots \supseteq V^k \supseteq 0$ of $V_G(\lambda)$ such that $V^0/V^1 \cong L_G(\lambda)$ and  
\begin{equation}
\sum_{i=1}^{k}{\ch V^i} = -\sum_{\alpha \in \Phi^+}{\sum_{r=2}^{\langle \lambda+\rho, \alpha \rangle-1}{\nu_p(r)\det (w_{\alpha,r})\chi(\xi_{\alpha,r})}},
\label{Another version of Jantzen p-sum formula}
\end{equation}
where for $\alpha \in \Phi^+$ and $1<r<\langle \lambda+\rho,\alpha \rangle,$ $\xi_{\alpha,r}$ denotes the unique weight in $\mathscr{W}\cdot (\lambda-r\alpha) \cap \mathscr{D}$ and $w_{\alpha,r}$ is an element in $\mathscr{W}$ satisfying $w_{\alpha,r}\cdot (\lambda-r\alpha) = \xi_{\alpha,r}.$
\end{prop}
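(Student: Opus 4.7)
The plan is to follow the classical strategy of Jantzen: construct a filtration of $V_G(\lambda)$ from a contravariant bilinear form on an integral model of the Weyl module, and then read the character sum off Shapovalov's determinant formula.

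First I would realize $V_G(\lambda)$ as the reduction modulo a uniformizer $\pi$ of an admissible $\mathcal{O}$-form $V_{\mathcal{O}}(\lambda)$, where $\mathcal{O}$ is a discrete valuation ring with residue field $K$ and fraction field $F$ of characteristic zero. The Chevalley anti-involution of the hyperalgebra provides, after fixing a normalization on a highest weight vector, a non-degenerate contravariant symmetric bilinear form $(\cdot,\cdot)$ on $V_{\mathcal{O}}(\lambda)$ with values in $\mathcal{O}$. Define
$$
V^i \;=\; \{\,\bar v \in V_G(\lambda) : \exists\,v\in V_{\mathcal{O}}(\lambda) \text{ lifting } \bar v \text{ with } (v,V_{\mathcal{O}}(\lambda))\subseteq \pi^i\mathcal{O}\,\}.
$$
Then $V^0=V_G(\lambda)$, and $V^1$ is the radical of the induced form on $V_G(\lambda)$. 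This radical coincides with $\rad(\lambda)$, since the quotient $V^0/V^1$ inherits a non-degenerate contravariant form that forces it to be the irreducible head $L_G(\lambda)$. A Smith normal form computation on the Gram matrix, weight space by weight space, then yields
$$
\sum_{i\ge 1}\ch V^i \;=\; \sum_{\mu}\nu_p(\det B_\mu)\,e^\mu,
$$
where $B_\mu$ denotes the Gram matrix of $(\cdot,\cdot)$ restricted to $V_{\mathcal{O}}(\lambda)_\mu$.

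Next I would substitute Shapovalov's determinant formula
$$
\det B_\mu \;=\; u\cdot\prod_{\alpha\in\Phi^+}\prod_{r\ge 1}\bigl(\langle\lambda+\rho,\alpha\rangle-r\bigr)^{P(\lambda-\mu-r\alpha)},
$$
where $u\in\mathcal{O}^{\times}$ and $P$ is Kostant's partition function, and convert the resulting expression into the character form on the right-hand side of the displayed identity. The mechanism is the classical generating-function identity
$$
\sum_{\mu}P(\nu-\mu)\,e^\mu \;=\; e^\nu\prod_{\alpha>0}\bigl(1-e^{-\alpha}\bigr)^{-1},
$$
which together with Weyl's character formula identifies the Weyl alternation of $\sum_\mu P(\lambda-\mu-r\alpha)\,e^\mu$ with $\chi(\lambda-r\alpha)$. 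Using Lemma \ref{[Preliminaries] The Jantzen p-sum formula: Tools to compute chi(lambda)} one rewrites $\chi(\lambda-r\alpha)=\det(w_{\alpha,r})\chi(\xi_{\alpha,r})$ when $\lambda-r\alpha$ is dot-regular, and notes that $\chi$ vanishes otherwise. The reindexing $r\mapsto\langle\lambda+\rho,\alpha\rangle-r$ turns $\nu_p(\langle\lambda+\rho,\alpha\rangle-r)$ into $\nu_p(r)$ at the cost of an $s_\alpha$-reflection, which produces the overall minus sign via $\det(s_\alpha)=-1$.

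The main obstacle will be the combinatorial bookkeeping in this last step. One must verify that the range of $r$ truncates exactly to $2\le r\le\langle\lambda+\rho,\alpha\rangle-1$: the term $r=1$ contributes $\nu_p(1)=0$, and the endpoints $r=0$ and $r=\langle\lambda+\rho,\alpha\rangle$ correspond to weights on a dot-action wall where $\chi$ vanishes by Lemma \ref{[Preliminaries] The Jantzen p-sum formula: Tools to compute chi(lambda)}. One must also carefully track the signs produced by the reindexing and the dot action, and ensure that passage from an infinite formal product to the finite double sum is justified weight by weight. The existence of the integral contravariant form and the Shapovalov determinant formula itself are standard but non-trivial inputs, which I would cite from \cite{Jantzen} rather than reprove.
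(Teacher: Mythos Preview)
Your outline is correct, but it is far more elaborate than what the paper actually does. The paper's proof occupies five lines: it simply cites \cite[II, 8.19]{Jantzen} for the existence of the Jantzen filtration together with the identity
\[
\sum_{i\geq 1}\ch V^i \;=\; \sum_{\alpha\in\Phi^+}\sum_{r=2}^{\langle\lambda+\rho,\alpha\rangle-1}\nu_p(r)\,\chi(s_{\alpha,r}\cdot\lambda),
\]
and then observes that $s_{\alpha,r}\cdot\lambda = s_\alpha\cdot(\lambda-r\alpha)$, so that writing $\lambda-r\alpha$ in the form $w_{\alpha,r}^{-1}\cdot\xi_{\alpha,r}$ with $\xi_{\alpha,r}\in\mathscr{D}$ and applying Lemma~\ref{[Preliminaries] The Jantzen p-sum formula: Tools to compute chi(lambda)} gives $\chi(s_{\alpha,r}\cdot\lambda)=\det(s_\alpha w_{\alpha,r}^{-1})\chi(\xi_{\alpha,r})=-\det(w_{\alpha,r})\chi(\xi_{\alpha,r})$.

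What you have sketched is essentially the content of Jantzen's original argument behind \cite[II, 8.19]{Jantzen} itself: the contravariant form on an integral lattice, the Smith normal form identity for $\sum_i\ch V^i$, the Shapovalov determinant, and the passage to Euler characters via the Kostant partition function. That is a valid and self-contained route, and your handling of the reindexing and the sign is accurate. But for the purposes of this paper the proposition is a mild reformulation of a result quoted as a black box; the only new content is the computation $s_{\alpha,r}\cdot\lambda=s_\alpha\cdot(\lambda-r\alpha)$ and the appeal to Lemma~\ref{[Preliminaries] The Jantzen p-sum formula: Tools to compute chi(lambda)}. Your proof buys self-containment at the cost of reproducing a substantial piece of \cite{Jantzen}; the paper's proof buys brevity by citing it.
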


\begin{proof}
By \cite[II, 8.19]{Jantzen}, there exists a filtration  $V_G(\lambda)=V^0\supsetneq V^1 \supseteq \ldots \supseteq V^k \supseteq 0$ of $V_G(\lambda)$ such that $V^0/V^1 \cong L_G(\lambda)$ and 
$$
\sum_{i=1}^{k}{\ch V^i} = \sum_{\alpha \in \Phi^+}{\sum_{r=2}^{\langle \lambda+\rho, \alpha \rangle-1}{\nu_p(r)\chi(s_{\alpha,r}\cdot \lambda)}}.
$$ 
Fix $\alpha\in \Phi^+,$ $1<r<\langle \lambda+\rho,\alpha\rangle,$ and let  $w_{\alpha,r}\in \mathscr{W}$ and $\xi_{\alpha,r}\in \mathscr{D}$ be such that $w_{\alpha,r}\cdot \xi_{\alpha,r}=\lambda-r\alpha.$ (Such elements exist, since $\mathscr{D}$ is a fundamental domain for the dot action.)  A straightforward calculation yields  $s_{\alpha,r}\cdot \lambda = s_{\alpha}\cdot (\lambda-r\alpha),$ from which one deduces that $\chi(s_{\alpha,r} \cdot \lambda)  =\chi((s_\alpha w_{\alpha,r} )\cdot \xi_{\alpha,r}).$ An application of Lemma \ref{[Preliminaries] The Jantzen p-sum formula: Tools to compute chi(lambda)}  then completes the proof.
\end{proof}

We shall call a filtration of $V_G(\lambda)$ as in Proposition \ref{Proposition:Sum_formula} a \emph{Jantzen filtration} of $V_G(\lambda).$ Let us then fix such a filtration $V_G(\lambda)=V^0\supsetneq V^1 \supseteq \ldots \supseteq V^k \supseteq 0$ in the remainder of the section. Also, following  \cite[II, 8.14]{Jantzen}, we write $\nu^c (T_{\lambda})$ to denote the expression \eqref{Another version of Jantzen p-sum formula}. As  $\{\chi(\lambda)\}_{\lambda\in X^+(T)}$ forms a $\Z$-basis of $\Z[X(T)]^{\mathscr{W}} $  (see \cite[Remark II.5.8]{Jantzen}, for instance), there exists $(a_\nu)_{\nu\in X^+(T)}\subset \mathbb{Z}$  such that   
\begin{equation}
\nu^c(T_\lambda)=\sum_{\nu \in X^+(T)}{a_\nu \chi(\nu)}.
\label{nu^c(T_lambda)_in_terms_of_chi(nu)'s}
\end{equation}

Consider a $T$-weight $\mu \in X(T)$ with $\mu \prec \lambda.$ In what follows, we  introduce a ``truncated'' version of the character $\nu^c(T_\lambda),$ which shall prove useful in computations. Define   
\begin{equation}
\nu_\mu^c(T_\lambda)= \sum_{\substack{\nu \in X^+(T) \\ \mu \preccurlyeq \nu \prec \lambda}}{a_\nu \chi_\mu (\nu)},
\label{nu_mu^c(T_lambda)_in_terms_of_chi_mu(nu)'s}
\end{equation}
where the $a_\nu$ $(\nu\in X^+(T))$ are as in \eqref{nu^c(T_lambda)_in_terms_of_chi(nu)'s}.  Finally, the latter decomposition yields  
\begin{equation}
\nu_\mu^c(T_\lambda)= \sum_{\substack{\xi \in X^+(T) \\ \mu \preccurlyeq \xi \prec \lambda}}{b_\xi \ch L_G(\xi)},
\label{nu_mu^c(T_lambda)_in_terms_of_ch(xi)'s}
\end{equation}
for some $b_\xi  \in \Z$ $,\xi\in X^+(T).$ 

The following proposition provides some insight on how the truncated $p$-sum formula \eqref{nu_mu^c(T_lambda)_in_terms_of_ch(xi)'s}  can be used in order to determine the possible composition factors of $V_G(\lambda),$ together with an upper bound for their multiplicity.

\begin{prop}\label{Insight_on_possible_composition_factors}
Let $\lambda\in X^+(T)$ and consider a $T$-weight $\mu \prec \lambda.$ Also let $\xi \in X^+(T)$ be a dominant weight such that $\mu \preccurlyeq \xi \prec \lambda.$ Then $\xi$ affords the highest weight of a composition factor of $V_G(\lambda)$ if and only if $b_\xi\neq 0$ in \eqref{nu_mu^c(T_lambda)_in_terms_of_ch(xi)'s}. Also  $[V_G(\lambda),L_G(\xi)]\leqslant b_\xi.$
\end{prop}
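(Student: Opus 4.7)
The plan is to combine the Jantzen sum formula with a bookkeeping argument carried out in the $\{\ch L_G(\xi)\}$-basis of $\Z[X(T)]^{\mathscr{W}}$. By Proposition~\ref{Proposition:Sum_formula}, I would fix a Jantzen filtration $V_G(\lambda)=V^0\supsetneq V^1\supseteq \cdots \supseteq V^k\supseteq 0$ with $V^0/V^1\cong L_G(\lambda)$ and $\sum_{i\geqslant 1}\ch V^i=\nu^c(T_\lambda)$. Expanding this identity in the irreducible basis gives $\sum_{i\geqslant 1}\ch V^i=\sum_{\xi\in X^+(T)} c_\xi \ch L_G(\xi)$ with $c_\xi=\sum_{i\geqslant 1}[V^i,L_G(\xi)]\in \Z_{\geqslant 0}$. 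The first observation I would record is that, for any $\xi\prec \lambda$, the quotient $V^0/V^1\cong L_G(\lambda)$ forces $[V^1,L_G(\xi)]=[V_G(\lambda),L_G(\xi)]$, and the inclusions $V^i\subseteq V^1$ give $[V^i,L_G(\xi)]\leqslant [V^1,L_G(\xi)]$; hence $c_\xi\geqslant [V_G(\lambda),L_G(\xi)]$, with $c_\xi=0$ if and only if $[V_G(\lambda),L_G(\xi)]=0$.

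Next I would rewrite the right-hand side in the same basis: substituting $\chi(\nu)=\sum_{\eta\preccurlyeq \nu}[V_G(\nu),L_G(\eta)]\ch L_G(\eta)$ into \eqref{nu^c(T_lambda)_in_terms_of_chi(nu)'s} yields $c_\xi=\sum_{\nu\in X^+(T)}a_\nu [V_G(\nu),L_G(\xi)]$, where any contributing $\nu$ necessarily satisfies $\xi\preccurlyeq \nu$. The key input at this stage is the strong linkage principle (see Proposition~\ref{Corollary to The linkage principle}): each dominant $\xi_{\alpha,r}$ occurring in Proposition~\ref{Proposition:Sum_formula} is a dot-conjugate of $\lambda-r\alpha$ for some $\alpha\in \Phi^+$ with $\nu_p(r)\geqslant 1$, so it is strongly linked to $\lambda$ and therefore $\preccurlyeq \lambda$; moreover a short length computation shows that $\xi_{\alpha,r}=\lambda$ would force $r=0$, which is impossible. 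Consequently every $\nu\in X^+(T)$ with $a_\nu\neq 0$ satisfies $\nu\prec \lambda$.

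To conclude, I would read off $b_\xi$ from \eqref{nu_mu^c(T_lambda)_in_terms_of_chi_mu(nu)'s}. Unwrapping the definition of $\chi_\mu$, the coefficient of $\ch L_G(\xi)$ in $\nu_\mu^c(T_\lambda)$ (for $\mu\preccurlyeq \xi\prec \lambda$) equals $\sum_{\mu\preccurlyeq \nu\prec \lambda,\,\xi\preccurlyeq \nu}a_\nu[V_G(\nu),L_G(\xi)]$. The chain $\mu\preccurlyeq \xi\preccurlyeq \nu$ already makes the restriction $\mu\preccurlyeq \nu$ vacuous, and by the previous step the extra restriction $\nu\prec \lambda$ does not discard any nonzero term. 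Therefore this coefficient coincides with $c_\xi$, and combining with the first step gives $b_\xi=c_\xi\geqslant [V_G(\lambda),L_G(\xi)]$, with $b_\xi=0$ if and only if $[V_G(\lambda),L_G(\xi)]=0$. This is exactly the statement of the proposition.

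The step I expect to be most delicate is the strong-linkage argument: truncating to $\nu\prec \lambda$ without knowing in advance that every $\nu$ with $a_\nu\neq 0$ already lies in that range would only produce the weaker estimate $b_\xi\leqslant c_\xi$, which is insufficient both for the claimed inequality and for the forward direction of the equivalence. Once that is in place, the remainder is a purely formal manipulation of the two decompositions of $\nu^c(T_\lambda)$ in the bases $\{\chi(\nu)\}_{\nu\in X^+(T)}$ and $\{\ch L_G(\xi)\}_{\xi\in X^+(T)}$.
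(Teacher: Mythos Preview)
Your approach is essentially the same as the paper's: set $c_\xi=\sum_{i\geqslant 1}[V^i,L_G(\xi)]$, observe that $[V_G(\lambda),L_G(\xi)]=[V^1,L_G(\xi)]\leqslant c_\xi$ with $c_\xi=0\Leftrightarrow [V_G(\lambda),L_G(\xi)]=0$, and then show $b_\xi=c_\xi$. The paper packages this identically (writing $V=V^1\oplus\cdots\oplus V^k$ and $[V,L_G(\xi)]$ for your $c_\xi$).

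The one place where your argument is weaker than the paper's is the justification that every $\nu$ with $a_\nu\neq 0$ satisfies $\nu\prec\lambda$. You invoke Proposition~\ref{Corollary to The linkage principle}, but that statement concerns composition factors of $V_G(\lambda)$, not the dominant weights $\xi_{\alpha,r}$ arising in the sum formula; and the assertion ``$\xi_{\alpha,r}$ is a dot-conjugate of $\lambda-r\alpha$, hence strongly linked to $\lambda$, hence $\preccurlyeq\lambda$'' is not justified by that citation (being $\mathscr{W}_p$-conjugate to $\lambda$ is necessary but not sufficient for $\uparrow\lambda$, and the $\mathscr{D}$-representative of a dot-orbit is the \emph{largest} element, so $\xi_{\alpha,r}\succcurlyeq \lambda-r\alpha$ rather than $\preccurlyeq$). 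The paper avoids this entirely: since $\nu^c(T_\lambda)=\sum_i\ch V^i$ is the character of a module all of whose $T$-weights are $\prec\lambda$, the triangularity of the bases $\{\chi(\nu)\}$ and $\{\ch L_G(\nu)\}$ (each $\ch L_G(\eta)$ is $\chi(\eta)$ plus a $\Z$-combination of $\chi(\eta')$ with $\eta'\prec\eta$) immediately forces $a_\nu=0$ whenever $\nu\not\prec\lambda$. This is both simpler and does not rely on linkage. Once you replace your strong-linkage paragraph by this observation, your proof matches the paper's.
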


\begin{proof}
By definition, we have $\nu^c(T_\lambda)=\ch V,$ where $V=V^1\oplus \cdots \oplus V^k.$  We first claim that $\xi$ affords the highest weight of a composition factor of $V_G(\lambda)$ if and only if $[V,L_G(\xi)]\neq 0.$ Indeed, $V^i \subsetneq V_G(\lambda)$ for $1 \leqslant i \leqslant k,$ and since $V_G(\lambda)/V^1 \cong L_G(\lambda),$ we have  
$$
0\leqslant [V_G(\lambda),L_G(\xi)] \leqslant [V,L_G(\xi)].
$$
In particular if $\xi$ affords the highest weight of a composition factor $V_G(\lambda),$ then $[V,L_G(\xi)] \neq 0.$ Conversely, assume $[V,L_G(\xi)] \neq 0.$ Since $V^0/V^1\cong L_G(\lambda)$ and $\xi\neq \lambda,$ there exists $1\leqslant i\leqslant k$ such that $[V^i,L_G(\xi)]\neq 0,$ and as $V^i\subseteq V_G(\lambda),$ the claim holds. In order to conclude, it remains to show that    $[V,L_G(\xi)]=b_\xi,$ where $b_\xi$ is as in  \eqref{nu_mu^c(T_lambda)_in_terms_of_ch(xi)'s}. Since every $T$-weight of $V_G(\lambda)$ (and hence of $V$) is under $\lambda,$ there exist integers $c_\nu$ such that 
$$
\nu_\mu^c(T_\lambda) = \nu^c(T_\lambda)-\sum_{\substack{\nu \in X^+(T)\\ \mu \not\preccurlyeq \nu \prec\lambda }}{c_\nu \ch L_G(\nu)}.
$$
In particular, this shows that $b_\xi =[V,L_G(\xi)]$ as desired, thus completing the proof.
\end{proof}

Fix $\mu\in X^+(T)$ with $\mu\prec \lambda.$ For $\nu\in X^+(T),$ we call the coefficient $a_\nu$ in \eqref{nu_mu^c(T_lambda)_in_terms_of_chi_mu(nu)'s} the \emph{contribution} of $\nu$ to $\nu^c_\mu(T_\lambda),$ and we say that $\nu$ \emph{contributes} to $\nu^c_\mu(T_\lambda)$ if its contribution is non-zero. Now applying Proposition \ref{Insight_on_possible_composition_factors} to a given triple  $\mu\preccurlyeq \xi \prec \lambda$ requires the knowledge of the contribution of $\nu$ to $\nu_\mu^c(T_\lambda)$ for each dominant $T$-weight $\xi \preccurlyeq \nu \prec \lambda.$  In certain cases, knowing whether or not a given $T$-weight contributes to $\nu^c_\mu(T_\lambda)$  can be easily determined, as the following result shows.

\begin{lem}\label{no_contribution_for_certain_weights}
Let $\lambda,$ $\mu$ and $\nu$ be as above, with $\nu$ maximal  with respect to the partial order   $\preccurlyeq$, such that $\nu$ contributes to $\nu^c_{\mu}(T_{\lambda}).$ Then $\nu$ affords the highest weight of a composition factor of $V_G(\lambda).$  
\end{lem}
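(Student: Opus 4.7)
The plan is to reduce the statement to Proposition \ref{Insight_on_possible_composition_factors} by showing that the coefficient $b_\nu$ in \eqref{nu_mu^c(T_lambda)_in_terms_of_ch(xi)'s} agrees with the contribution $a_\nu$ of $\nu$ to $\nu^c_\mu(T_\lambda)$. Since $a_\nu \neq 0$ by assumption, this would give $b_\nu \neq 0$ and hence the desired conclusion via Proposition \ref{Insight_on_possible_composition_factors}.

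To make this precise, I would first substitute the definition of $\chi_\mu(\eta)$, namely
$$
\chi_\mu(\eta) = \ch L_G(\eta) + \sum_{\substack{\tau \in X^+(T) \\ \mu \preccurlyeq \tau \prec \eta}} [V_G(\eta),L_G(\tau)] \ch L_G(\tau),
$$
into the definition \eqref{nu_mu^c(T_lambda)_in_terms_of_chi_mu(nu)'s} of $\nu^c_\mu(T_\lambda)$ and extract the coefficient of $\ch L_G(\nu)$ on both sides. This gives
$$
b_\nu = a_\nu + \sum_{\substack{\eta \in X^+(T) \\ \nu \prec \eta \prec \lambda}} a_\eta \, [V_G(\eta),L_G(\nu)],
$$
where the first term comes from the $\eta = \nu$ summand and the remaining terms from those $\eta$ with $\nu \prec \eta \prec \lambda$ and $\mu \preccurlyeq \nu$ (which holds by hypothesis).

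The maximality assumption on $\nu$ then kills the sum: every $\eta \in X^+(T)$ with $\nu \prec \eta \prec \lambda$ satisfies $a_\eta = 0$, for otherwise $\eta$ would be a contributor to $\nu^c_\mu(T_\lambda)$ strictly larger than $\nu$ in the partial order $\preccurlyeq$. Therefore $b_\nu = a_\nu \neq 0$, and an application of Proposition \ref{Insight_on_possible_composition_factors} yields that $\nu$ affords the highest weight of a composition factor of $V_G(\lambda)$.

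I do not anticipate any real obstacle here: the argument is purely bookkeeping, based on the triangularity of the change of basis between $\{\chi_\mu(\eta)\}$ and $\{\ch L_G(\xi)\}$ with respect to $\preccurlyeq$. The only subtlety is to check that the condition $\mu \preccurlyeq \nu$ (needed for the term $[V_G(\eta),L_G(\nu)] \ch L_G(\nu)$ to actually appear in $\chi_\mu(\eta)$) is automatic, which follows from $\nu$ being in the range of summation of \eqref{nu_mu^c(T_lambda)_in_terms_of_ch(xi)'s}.
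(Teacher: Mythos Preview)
Your proof is correct and follows essentially the same line as the paper's: both exploit the triangularity of the change of basis from $\{\chi_\mu(\eta)\}$ to $\{\ch L_G(\xi)\}$ to show $b_\nu=a_\nu$, then invoke Proposition~\ref{Insight_on_possible_composition_factors}. The paper phrases the argument contrapositively (grouping the non-$\nu$ contributors into a set $\Lambda^+$ of weights $\xi$ with $\nu\not\preccurlyeq\xi$, whence $[V_G(\xi),L_G(\nu)]=0$), while you directly note that any $\eta$ with $\nu\prec\eta\prec\lambda$ has $a_\eta=0$ by maximality; these are the same observation.
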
 

\begin{proof}
Write $\Lambda^+=\{\xi\in X^+(T):\mu \preccurlyeq \xi \preccurlyeq \lambda,~\nu \not\preccurlyeq \xi\}.$ By maximality of $\nu,$ there exists $a_\nu\in \mathbb{Z}^*$ and $(a_\xi)_{\xi\in \Lambda^+}\subset \Z$ such that 
\begin{equation}
\nu_\mu^c(T_\lambda)=a_\nu \chi_\mu(\nu) + \sum_{\xi\in \Lambda^+}{a_\xi \chi_\mu(\xi)}.
\label{Maximal_weight_contributing}
\end{equation}
As $\sigma \preccurlyeq \xi$ for all $\sigma \in \Lambda(\xi),$ $\xi\in \Lambda^+,$ we get that $\nu$ is not a weight in $\Lambda(\xi),$ $\xi\in \Lambda^+,$ and hence cannot afford the highest weight of a composition factor of $V_G(\xi),$ $\xi\in \Lambda^+.$ Therefore, rewriting $\chi_\mu(\nu)$ and each $\chi_\mu(\xi)$ of \eqref{Maximal_weight_contributing} in terms of characters of irreducibles (recall that  $[V_G(\nu),L_G(\nu)]=1$)  yields the existence of a tuple $(b_\xi)_{\xi \in \Lambda^+}\subset \Z$ such that $\nu_\mu^c(T_\lambda)=a_\nu \ch L_G(\nu) + \sum_{\xi\in \Lambda^+}{b_\xi\ch L_G(\xi)}.$ An application of Proposition  \ref{Insight_on_possible_composition_factors} then completes the proof.
\end{proof}

Fix $\nu\in X^+(T),$ and recall from \cite{Bourbaki} the description of the simple roots and fundamental weights for $T$ in terms of a basis $\{\varepsilon_1,\ldots,\varepsilon_{d_\Phi}\}$ for a Euclidean space $E$ of dimension $d_\Phi.$   Following the idea of \cite{McNinch}, for  $\alpha \in \Phi^+$ and $r\in \mathbb{Z}_{\geqslant 0}$ such that $1<r<\langle \lambda+\rho, \alpha\rangle$, we write $\lambda+\rho-r\alpha=a_1\varepsilon_1+\cdots+a_{d_\Phi}\varepsilon_{d_\Phi},$ as well as $\nu +\rho =b_1\varepsilon_1+\cdots+b_{d_\Phi}\varepsilon_{d_\Phi}.$ Also, we set $A_{\alpha,r}=(a_j)_{j=1}^{d_\Phi}\in \mathbb{Q}^{d_\Phi}$ and $B_{\nu}= (b_j)_{j=1}^{d_\Phi}\in \mathbb{Q}^{d_\Phi}.$  The action of the Weyl group $\mathscr{W}$ of $G$ on the basis $\{\varepsilon_1,\ldots,\varepsilon_{d_\Phi}\}$ is described in \cite{Bourbaki}, and extends to an action of $\mathscr{W}$ on $\mathbb{Q}^{d_\Phi}$ in the obvious way. (We write $w\cdot A$ for $w\in \mathscr{W},$ $A\in \mathbb{Q}^{d_\Phi}.$) Define the \emph{support} of an element $z\in \Z\Phi$ to be the subset $\mbox{supp}(z)$ of $\Pi$ consisting of those simple roots $\alpha$ such that $c_{\alpha}\neq 0$ in the decomposition $z=\sum{c_{\alpha}\alpha}.$ Also for  $w\in \mathscr{W},$ we write $\det (w)$ for the determinant of $w$ as an invertible linear transformation of $X(T)_{\R}.$ The following result is  our main tool for determining the contribution of $\nu$ to $\nu^c_\mu(T_\lambda),$ for each weight $\nu\in X^+(T)$ with $\mu\preccurlyeq \nu \prec\lambda.$

\begin{thm}\label{How_to_determine_contributions}
Let $\lambda\in X^+(T),$ and fix a Jantzen filtration $V_G(\lambda)\supsetneq V^1\supseteq V^2 \supseteq \ldots \supseteq V^k  \supseteq 0$ of $V_G(\lambda).$ Also consider a weight $\mu\in X(T)$ with $\mu\prec\lambda,$ and let $\nu\in X^+(T)$ be such that $\mu\preccurlyeq \nu\prec\lambda.$ Finally, write $I_\nu=\{(\alpha,r)\in \Phi^+\times [2,\langle \lambda+\rho,\alpha\rangle]:\textnormal{supp}(\alpha)=\textnormal{supp}(\lambda-\nu), B_\nu \in  \mathscr{W} \cdot A_{\alpha,r}\},$ and for each pair $(\alpha,r)\in I_\nu,$ choose $w_{\alpha,r}\in \mathscr{W}$ such that $w_{\alpha,r}\cdot A_{\alpha,r}=B_\nu.$ Then the contribution of $\nu$ to $\nu_\mu^c(T_\lambda)$ is given by 
$$
-\sum_{(\alpha,r)\in I_\nu}{\nu_p(r)\det (w_{\alpha,r})}.
$$ 
\end{thm}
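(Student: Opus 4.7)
The plan is to unpack the Jantzen $p$-sum formula (Proposition~\ref{Proposition:Sum_formula}) and re-group its terms by dominant weight. Since $\chi(\xi_{\alpha,r})=0$ whenever $\xi_{\alpha,r}\notin X^+(T)$ (by Lemma~\ref{[Preliminaries] The Jantzen p-sum formula: Tools to compute chi(lambda)}), the vanishing summands may be discarded, yielding
\begin{equation*}
\nu^c(T_\lambda)\;=\;-\sum_{\eta\in X^+(T)}\Bigg(\sum_{(\alpha,r)\,:\,\xi_{\alpha,r}=\eta}\nu_p(r)\det(w_{\alpha,r})\Bigg)\chi(\eta).
\end{equation*}
Because $\{\chi(\eta)\}_{\eta\in X^+(T)}$ is a $\Z$-basis of $\Z[X(T)]^{\mathscr{W}}$, comparing with~\eqref{nu^c(T_lambda)_in_terms_of_chi(nu)'s} identifies each $a_\eta$ with the parenthesized sum. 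The coefficients $a_\nu$ featured in~\eqref{nu_mu^c(T_lambda)_in_terms_of_chi_mu(nu)'s} are exactly the same, so the theorem reduces to showing that the pairs $(\alpha,r)$ with $\xi_{\alpha,r}=\nu$ are precisely those in $I_\nu$.

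I would then translate the orbit condition $\xi_{\alpha,r}=\nu$ into coordinates: by definition of the dot action, $\xi_{\alpha,r}=\nu$ is equivalent to the existence of $w\in \mathscr{W}$ with $w(\lambda+\rho-r\alpha)=\nu+\rho$, which in the Euclidean basis $\{\varepsilon_1,\ldots,\varepsilon_{d_\Phi}\}$ reads $w\cdot A_{\alpha,r}=B_\nu$. Since $\nu$ is a dominant weight, $\nu+\rho$ is strongly dominant, so any such $w$ is unique and may be chosen as the element $w_{\alpha,r}$ of the statement; this recovers the orbit condition $B_\nu\in \mathscr{W}\cdot A_{\alpha,r}$.

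The hard part will be the support equality $\textnormal{supp}(\alpha)=\textnormal{supp}(\lambda-\nu)$. To handle it I would set $J=\textnormal{supp}(\alpha)$ and introduce the parabolic subgroup $\mathscr{W}_J=\langle s_{\alpha_j}:\alpha_j\in J\rangle$, which fixes $M_J:=\sum_{\alpha_k\notin J}\langle\lambda+\rho,\alpha_k\rangle\lambda_k$. Since $\langle\alpha,\alpha_k\rangle\leqslant 0$ for every simple root $\alpha_k\notin J$, one obtains $\langle\lambda+\rho-r\alpha,\alpha_k\rangle\geqslant \langle\lambda+\rho,\alpha_k\rangle\geqslant 1$, an inequality preserved by every element of $\mathscr{W}_J$. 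Consequently a strongly dominant representative $\nu+\rho$ can be reached using only reflections in $\mathscr{W}_J$, so $w_{\alpha,r}$ may be chosen in $\mathscr{W}_J$; the identity $\lambda-\nu=(I-w_{\alpha,r})(\lambda+\rho)+r\,w_{\alpha,r}(\alpha)$ then lies in $\Z\Phi_J$, yielding $\textnormal{supp}(\lambda-\nu)\subseteq J$. The reverse inclusion is more delicate and would be carried out by a root-system analysis inside $\Phi_J$, showing that any simple root $\alpha_k\in J$ absent from $\lambda-\nu$ must also be absent from $\alpha$. Once the support equality is secured, combining it with the Euclidean orbit condition yields exactly $I_\nu$, and the contribution formula for $\nu$ follows.
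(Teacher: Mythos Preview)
Your overall strategy coincides with the paper's: unpack Proposition~\ref{Proposition:Sum_formula}, discard the summands with $\xi_{\alpha,r}\notin X^+(T)$, regroup what remains according to the dominant weight $\eta=\xi_{\alpha,r}$, and read off $a_\nu$ by linear independence of the $\chi(\eta)$. The translation of $w\cdot(\lambda-r\alpha)=\nu$ into the Euclidean condition $w\cdot A_{\alpha,r}=B_\nu$ is likewise identical to the paper's final step.

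The one substantive divergence is the support equality $\textnormal{supp}(\alpha)=\textnormal{supp}(\lambda-\nu)$, which you correctly flag as the hard part. The paper does not prove it at all: it simply invokes \cite[Lemma~4.5.6]{McNinch}. Your inline argument, by contrast, is incomplete on both sides. For the inclusion $\textnormal{supp}(\lambda-\nu)\subseteq J$, the assertion that the inequality $\langle\,\cdot\,,\alpha_k\rangle\geqslant 1$ (for $\alpha_k\notin J$) is ``preserved by every element of $\mathscr{W}_J$'' is not justified and is false as a general statement about $\mathscr{W}_J$-orbits: passing to the $J$-dominant representative $v_0$ of $v=\lambda+\rho-r\alpha$ adds a non-negative $\Z\Phi_J$-combination to $v$, and since $\langle\alpha_j,\alpha_k\rangle\leqslant 0$ for $j\in J$, $k\notin J$, this can only \emph{decrease} the pairing with $\alpha_k$. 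What one actually needs is that the (unique) $\mathscr{W}$-dominant representative already lies in the $\mathscr{W}_J$-orbit of $v$; this is true, but it is precisely the content of McNinch's lemma and requires the bound $r<\langle\lambda+\rho,\alpha\rangle$ in an essential way. Your identity $\lambda-\nu=(I-w_{\alpha,r})(\lambda+\rho)+r\,w_{\alpha,r}(\alpha)$ then does give the inclusion once $w_{\alpha,r}\in\mathscr{W}_J$ is established. For the reverse inclusion you only announce ``a root-system analysis inside $\Phi_J$'', which is not yet a proof.

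In short: your reduction to the support condition is sound and matches the paper, but the support condition itself is where all the work lies, and the paper outsources it to \cite[Lemma~4.5.6]{McNinch}. Either cite that lemma, or supply a complete argument for both inclusions.
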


\begin{proof}
By Proposition \ref{Proposition:Sum_formula}, the contribution of $\nu$ to the truncated Jantzen $p$-sum formula $\nu_\mu^c(T_\lambda)$ is given by  
$$
-\sum_{\alpha\in \Phi^+}\sum_{r=2}^{\langle \lambda+\rho,\alpha\rangle -1}{\nu_p(r) \det (w_{\alpha,r})},
$$
where for $\alpha\in \Phi^+$ and $ 2\leqslant r\leqslant \langle \lambda+\rho,\alpha\rangle -1,$ the element $w_{\alpha,r}$ is either zero (if  $\nu \notin \mathscr{W}\cdot (\lambda-r\alpha)$), or a chosen element in $\mathscr{W}$ satisfying $w_{\alpha,r}\cdot   (\lambda-r\alpha)=\nu.$ In addition, observe that by \cite[Lemma 4.5.6]{McNinch}, the latter can only occur if $\lambda-r\alpha$ and $\nu$ have the same support. Finally, identifying the action of $\mathscr{W}$ on $X(T)$ with that of $\mathscr{W}$ on $\mathbb{Q}^{d_\Phi}$ as above completes the proof. 
\end{proof}

\section{The $B_n$-case $(n\geqslant 2)$}\label{The_Bn_case}	

Let $K$ be an algebraically closed field of characteristic $p\neq 2,$ and let $Y$  a simply connected, simple algebraic group of type $A_{2n}$ $(n\geqslant 2)$ over $K.$ Consider a subgroup $G$ of type $B_n,$ embedded in the usual way, as the stabilizer of a non-degenerate quadratic form on the natural module for $Y.$   Fix a Borel subgroup $B_Y=  U_Y T_Y$ of $Y,$ where $T_Y$ is a maximal torus of $Y$ and $U_Y$ is the unipotent radical of $B_Y,$ let $\Pi(Y) = \{\alpha_1,\ldots,\alpha_{2n}\}$ denote a corresponding base of the root system $\Phi(Y)=\Phi^+(Y)\sqcup \Phi^-(Y)$ of $Y,$ and let $\{\lambda_1,\ldots,\lambda_{2n}\}$ be the set of fundamental dominant weights for $T_Y$ corresponding to our choice of base $\Pi(Y).$ Also set $T=T_Y\cap G,$ $B=B_Y\cap G,$ so that $T$ is a maximal torus of $G,$ and $B$ is a Borel subgroup of $G$ containing $T.$ Let $\Pi(G)=\{\beta_1,\ldots,\beta_n\}$ be the corresponding base for the root system $\Phi(G)=\Phi^+(G)\sqcup \Phi^-(G)$ of $G,$ and let $\varpi_1,\ldots,\varpi_n$ denote the associated fundamental weights. Here the root restrictions are given by $\alpha_i|_{T}=\alpha_{2n-i+1}|_{T}=\beta_i$ for $1\leqslant i\leqslant n.$ Finally, using \cite[Table 1, p.69]{Humphreys1} and the fact that $\lambda_1|_{T}=\varpi_1$ yields
\begin{equation}
\lambda_i|_{T}=\lambda_{2n-i+1}|_{T}=\varpi_i, \mbox{ } \lambda_n|_{T}=\lambda_{n+1}|_{T}= 2\varpi_n, ~1\leqslant i\leqslant n-1.
\label{[Dn<A2n-1] Weight restrictions for B}
\end{equation} 

In this section, we show how to obtain the tables \ref{Table_B}, \ref{Table_of_dimensions_main_result_type_B}, and \ref{Restriction_of_V_Y(lambda)_to_G_of_type_B} in Theorem \ref{Main_Result_1}, Corollary \ref{Introduction: corollary on the dimension (B)}, and Proposition \ref{Restrictions_of_irreducibles_of_type_A}, respectively. In order to do so, we   rely as much as possible on the embedding of $G$ in $Y$ described above, proceeding in the following steps: we start by computing the formal character of the restriction to $G$ of certain Weyl modules for $Y$ (see Proposition  \ref{Type_B:restriction_in_characteristic_zero_0<j<n+1}), and then deduce the character of certain tensor products of irreducible Weyl modules for $G$ (see Lemmas \ref{chT(omega)_for_B} and \ref{Another_chT(omega)_for_B}). Applying Proposition \ref{chT(omega)_as_a_sum_of_xi's} shall then yield an upper bound for the multiplicities of the possible composition factors of the Weyl modules $V_G(\varpi),$ in the case where $\varpi$ is as in the statement of Theorem \ref{Main_Result_1}. Finally, using Theorem \ref{How_to_determine_contributions}, we compute various contributions to the Jantzen $p$-sum formula in each case, and  we conclude using Proposition \ref{Insight_on_possible_composition_factors}. The proofs of Corollary \ref{Introduction: corollary on the dimension (B)} and Proposition \ref{Restrictions_of_irreducibles_of_type_A} are also given at the end on the section.

\subsection{Restriction of certain Weyl modules for $Y$}     

We start our investigation by showing that if $V$ is a $KY$-module with unique  highest weight $\lambda\in X^+(T_Y),$ then every $T$-weight of $V$ is under the restriction of $\lambda$ to $T.$ Hence the ordering of $T_Y$-weights is preserved when restricting to $G.$

\begin{lem}\label{[B]restriction_of_highest_weight}
Let $\lambda\in X^+(T_Y)$ be a dominant weight, and let $V$ be a $KY$-module with unique highest weight $\lambda.$ Then every $T$-weight $\xi$ of $V$  satisfies $\xi \preccurlyeq \lambda|_{T}.$
\end{lem}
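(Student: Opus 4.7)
The plan is to reduce the $T$-weight comparison in $V$ to the $T_Y$-weight comparison (which we get for free from the hypothesis on $V$) by exploiting the root restriction data $\alpha_i|_T = \alpha_{2n-i+1}|_T = \beta_i$ stated just above the lemma.

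First I would recall the standard fact that if $\lambda$ is the unique highest weight of $V$, then every $T_Y$-weight $\mu$ of $V$ satisfies $\mu \preccurlyeq \lambda$ in the ordering on $X(T_Y)_{\mathbb{R}}$, that is,
\[
\lambda-\mu=\sum_{i=1}^{2n} c_i\alpha_i,\qquad c_i\in\mathbb{Z}_{\geqslant 0}.
\]
(This uses only that $\mathscr{W}$-conjugates of a dominant weight lie below it and that every weight is $\mathscr{W}$-conjugate to a unique dominant weight, as recalled in Section~\ref{Rational representations}.)

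Next I would restrict both sides of this equality to $T$. By \eqref{[Dn<A2n-1] Weight restrictions for B}, each $\alpha_i|_T$ coincides with some $\beta_j\in\Pi(G)$; explicitly, $\alpha_i|_T=\beta_i$ for $1\leqslant i\leqslant n$ and $\alpha_{2n-i+1}|_T=\beta_i$ for $1\leqslant i\leqslant n$. Therefore
\[
\lambda|_T-\mu|_T=\sum_{i=1}^{2n} c_i\,\alpha_i|_T
\]
is a $\mathbb{Z}_{\geqslant 0}$-linear combination of simple roots of $G$, i.e.\ $\mu|_T\preccurlyeq \lambda|_T$.

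Finally I would close the loop by noting that any $T$-weight $\xi$ of $V$ arises as $\mu|_T$ for some $T_Y$-weight $\mu$ of $V$, because the $T_Y$-weight space decomposition of $V$ refines the $T$-weight space decomposition (each $V_\xi=\bigoplus_{\mu|_T=\xi}V_\mu$ is nonzero only if some summand is nonzero). Combining this with the previous step yields $\xi\preccurlyeq\lambda|_T$, as required. The only conceptual point to check carefully is the first step; the remaining computations are bookkeeping in the restriction table, and I do not anticipate any real obstacle.
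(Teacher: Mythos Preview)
Your proposal is correct and follows essentially the same approach as the paper: pick a $T_Y$-weight $\mu$ with $\mu|_T=\xi$, write $\lambda-\mu$ as a nonnegative integer combination of the $\alpha_i$, restrict, and use that each $\alpha_i|_T$ is a simple root of $G$. The only cosmetic point is that the label \eqref{[Dn<A2n-1] Weight restrictions for B} records the \emph{weight} restrictions; the root restrictions $\alpha_i|_T=\alpha_{2n-i+1}|_T=\beta_i$ are stated in the surrounding text, so you may want to cite that sentence directly rather than the displayed equation.
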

\begin{proof}
Let $\xi \in X(T)$ be a weight of $V|_G.$ Then there exists a $T_Y$-weight  $\mu $ of $V$ such that $\mu|_{T}=\xi.$ Since $V$ has unique highest weight, there exist $c_1,\ldots,c_{2n}\in \mathbb{Z}_{\geqslant 0}$ such that $\mu=\lambda-\sum_{r=1}^{2n}{c_r\alpha_r}.$ Therefore $\xi= \lambda|_{T}-\sum_{r=1}^{2n}{c_r\alpha_r|_{T}} $ and the assertion follows from the root and weight restrictions in \eqref{[Dn<A2n-1] Weight restrictions for B}.
\end{proof}

We next investigate the formal character of the restriction to $G$ of the Weyl module  $V_Y(\lambda_1+\lambda_j),$ where $1\leqslant j\leqslant 2n.$

\begin{prop}\label{Type_B:restriction_in_characteristic_zero_0<j<n+1}
Let $1\leqslant j\leqslant 2n,$ and consider the dominant $T_Y$-weight $\lambda=\lambda_1+\lambda_j\in X^+(T_Y).$ Also set $\varpi=\lambda|_{T}$ and adopt the notation $\varpi_0=0.$ Then 
$$
\ch V_Y(\lambda)|_G =
\begin{cases}
\chi(\varpi) + \chi(\varpi_{j-1})		& \mbox{ if $1\leqslant j\leqslant n;$}	\cr 
\chi(\varpi) + \chi(2\varpi_n)			& \mbox{ if $j=n+1,n+2;$}		\cr
\chi(\varpi) + \chi(\varpi_{2n-j+2})		& \mbox{ otherwise.}				\cr
\end{cases}
$$

\end{prop}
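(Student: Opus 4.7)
The plan is to reduce to characteristic zero, where $V_Y(\lambda)$ is irreducible, and to exploit the embedding $G \hookrightarrow Y$ together with Lemma \ref{Tensor_products_in_type_A}. Since both sides of the claimed identity are $\Z$-linear combinations of the $p$-independent expressions $\chi(\mu)$, and since formal characters of Weyl modules are independent of $\characteristic K$, it is enough to establish the equality when $\characteristic K = 0$. Under this assumption, Lemma \ref{Tensor_products_in_type_A} applied to $Y$ of type $A_{2n}$ gives (with the convention $\lambda_{2n+1}=0$)
\[
V_Y(\lambda_1)\otimes V_Y(\lambda_j) \cong V_Y(\lambda) \oplus V_Y(\lambda_{j+1})
\]
as $KY$-modules; restricting to $G$ and using that $V_Y(\lambda_1)|_G \cong V_G(\varpi_1)$ has character $\chi(\varpi_1)$ then yields
\[
\ch V_Y(\lambda)|_G = \chi(\varpi_1) \cdot \ch V_Y(\lambda_j)|_G - \ch V_Y(\lambda_{j+1})|_G.
\]

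Next I would determine $\ch V_Y(\lambda_i)|_G$ for every $0 \leqslant i \leqslant 2n+1$. By Lemma \ref{Dimension_of_various_irreducibles_in_type_A}, in characteristic zero $V_Y(\lambda_i)$ is isomorphic to the exterior power $\Lambda^i W$ of the natural module $W = V_Y(\lambda_1)$; and since $G$ preserves a non-degenerate symmetric bilinear form on $W$, Hodge duality induces a $KG$-isomorphism $\Lambda^i W \cong \Lambda^{2n+1-i} W$. Combining this with Lemma \ref{Dimension_of_various_irreducibles_in_type_B} (and the convention $\varpi_0 = 0$) yields
\[
\ch V_Y(\lambda_i)|_G =
\begin{cases}
\chi(\varpi_i) & \text{if } 0 \leqslant i \leqslant n-1,\\
\chi(2\varpi_n) & \text{if } i \in \{n,\, n+1\},\\
\chi(\varpi_{2n-i+1}) & \text{if } n+2 \leqslant i \leqslant 2n+1.
\end{cases}
\]

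Substituting into the preceding character identity reduces the proposition to the three Pieri-type tensor product decompositions for $G$ of type $B_n$ in characteristic zero, namely
\[
\chi(\varpi_1)\chi(\varpi_k) = \chi(\varpi_1+\varpi_k) + \chi(\varpi_{k+1}) + \chi(\varpi_{k-1}) \quad (1\leqslant k\leqslant n-2),
\]
\[
\chi(\varpi_1)\chi(\varpi_{n-1}) = \chi(\varpi_1+\varpi_{n-1}) + \chi(2\varpi_n) + \chi(\varpi_{n-2}),
\]
and
\[
\chi(\varpi_1)\chi(2\varpi_n) = \chi(\varpi_1+2\varpi_n) + \chi(2\varpi_n) + \chi(\varpi_{n-1}).
\]
A routine case split on $j \in \{1\}$, $2 \leqslant j \leqslant n-1$, $j \in \{n,\, n+1,\, n+2\}$, and $n+3 \leqslant j \leqslant 2n$, together with the description of $\varpi = \lambda|_T$ provided by the weight restrictions above, then produces the three formulas in the statement.

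The main obstacle will be the verification of the three Pieri identities above, which are not previously derived in the excerpt. A self-contained justification could proceed by noting that both sides lie in $\Z[X(T)]^{\mathscr{W}}$ with the same leading term $\chi(\varpi_1+\varpi_k)$, and by comparing $T$-dimensions via Theorem \ref{Weyl's formula}; alternatively, one may invoke the classical Pieri rule for $\SO_{2n+1}$.
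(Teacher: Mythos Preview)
Your approach is correct but genuinely different from the paper's. The paper works more directly: for $1\leqslant j\leqslant n$ it simply cites a branching result of Koike \cite[Proposition~1.5.3]{Koike} to obtain $V_Y(\lambda)|_G\cong V_G(\varpi)\oplus V_G(\varpi_{j-1})$ in characteristic zero, and for the remaining $j$ it lists explicitly which $T_Y$-weights of $V_Y(\lambda)$ restrict to the candidate $T$-weight, compares multiplicities, and finishes with a dimension check via Theorem~\ref{Weyl's formula}. Your route---pulling the Littlewood--Richardson decomposition in $Y$ back to $G$ and reducing everything to Pieri decompositions for $\SO_{2n+1}$---is cleaner and more uniform across all $j$, and it avoids the external reference to Koike.

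Two points deserve care. First, within the paper's logical structure the three Pieri identities you need are precisely the content of Lemmas~\ref{chT(omega)_for_B} and~\ref{Another_chT(omega)_for_B}, whose proofs invoke Proposition~\ref{Type_B:restriction_in_characteristic_zero_0<j<n+1}; so importing them as written would be circular. You flag this, which is good. Second, your proposed remedy of ``same leading term plus dimension via Weyl's formula'' is not sufficient on its own: a matching total dimension does not by itself pin down which $\chi(\mu)$ occur. A complete independent verification would check the multiplicity at each of the (at most three) dominant weights strictly below $\varpi_1+\varpi_k$ in the tensor product, which is a short computation, or else invoke the classical Pieri rule for $\SO_{2n+1}$ as you also suggest. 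With either of those in place, your argument goes through.
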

\begin{proof}
Write $V=V_Y(\lambda)$ and first notice that $\ch V|_G$ is independent of $p.$ Hence  we may and shall assume $K$ has characteristic zero in the remainder of the proof. In the case where $1\leqslant j\leqslant n,$ an application of \cite[Proposition 1.5.3]{Koike} yields $V_Y(\lambda)|_G\cong V_G(\varpi)\oplus V_G(\varpi_{j-1}),$ from which the result follows in this situation. Next assume $j=n+1.$ Here the weights $\lambda-(\alpha_1+\cdots+\alpha_n),$ $\lambda-(\alpha_1+\cdots+\alpha_r+\alpha_{n+1}+\cdots +\alpha_{2n -r})$  $(1\leqslant r\leqslant n-1),$ and $\lambda-(\alpha_{n+1}+\cdots+\alpha_{2n})$ all restrict to $\varpi'=2\varpi_n\in X^+(T).$ Therefore the latter occurs in a second composition factor of $V_Y(\lambda)|_G,$ whose highest weight $\mu\in X^+(T)$ satisfies $\varpi'\preccurlyeq \mu \preccurlyeq \varpi$ by Lemma  \ref{[B]restriction_of_highest_weight}. The only possibility is that $\varpi'$ itself affords the highest weight of a composition factor. Applying Theorem \ref{Weyl's formula}, Table \ref{Various_dimensions_in_type_A}, and Table \ref{Various_dimensions_in_type_B} then yields $\dim V_Y(\lambda)=\dim V_G(\varpi) + \dim V_G(2\varpi_n),$ from which the desired result follows in this case as well. The remaining cases can be dealt with in a similar fashion, hence the details are omitted here.
\end{proof}

\subsection{Formal character of various tensor products}     

We next determine the formal character of the tensor product $V_G(\varpi_1)\otimes V_G(\varpi_j)$ for $1\leqslant j\leqslant n,$ as well as of the formal character of the tensor product $V_G(\varpi_1)\otimes V_G(2\varpi_n).$ Observe that since $p\neq 2,$ each of the considered Weyl modules is irreducible, and hence is tilting.

\begin{lem}\label{chT(omega)_for_B}
Let $1\leqslant j\leqslant n,$ and consider the dominant $T$-weight $\varpi=\varpi_1+\varpi_j\in X^+(T).$ Also set $\varpi_0=\varpi_{n+1}=0$ and write $T(w)$ for the tensor product $V_G(\varpi_1)\otimes V_G(\varpi_j).$ Then $T(\varpi)$ is tilting and its formal character is given by 
$$
\ch T(\varpi)=\chi(\varpi)+(1-\delta_{j,n})\chi(\varpi_{j-1})+ (1-\delta_{j,1})\chi(\varpi_{j+1}+(\delta_{j,n-1} + \delta_{j,n})\varpi_n).
$$
\end{lem}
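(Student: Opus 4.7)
The plan is to first deduce the tilting property from general tensor-product considerations, and then compute the character by reducing to characteristic zero and exploiting the embedding $G \subset Y$ already set up in the section.

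Since $p \neq 2$, Lemma \ref{Dimension_of_various_irreducibles_in_type_B} shows that each of $V_G(\varpi_1)$ and $V_G(\varpi_j)$ is irreducible, and therefore coincides with its own Weyl module as well as with its induced dual $H^0$. In particular each is tilting, and so is $T(\varpi) = V_G(\varpi_1) \otimes V_G(\varpi_j)$ by Proposition \ref{Tensor product of modules with good filtrations}. This also shows that $\ch T(\varpi) = \ch V_G(\varpi_1) \cdot \ch V_G(\varpi_j)$ is independent of $p$, so for the character computation I may assume $\characteristic K = 0$ and use complete reducibility.

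For $1 \leq j \leq n-1$, the restriction $V_Y(\lambda_i)|_G$ is irreducible and isomorphic to $V_G(\varpi_i)$ for $i \in \{1, j\}$; this follows from the weight restrictions \eqref{[Dn<A2n-1] Weight restrictions for B} together with a Weyl-dimension comparison via Lemmas \ref{Dimension_of_various_irreducibles_in_type_A} and \ref{Dimension_of_various_irreducibles_in_type_B}. Consequently
$$T(\varpi) \cong \bigl(V_Y(\lambda_1) \otimes V_Y(\lambda_j)\bigr)\big|_G \cong V_Y(\lambda_1+\lambda_j)\big|_G \;\oplus\; V_Y(\lambda_{j+1})\big|_G,$$
where the second isomorphism comes from Lemma \ref{Tensor_products_in_type_A}. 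Applying Proposition \ref{Type_B:restriction_in_characteristic_zero_0<j<n+1} to each summand produces the relevant Weyl characters on the right-hand side, with the weight restriction $\lambda_n|_T = 2\varpi_n$ accounting for the correction $(\delta_{j,n-1}+\delta_{j,n})\varpi_n$ in the borderline $j+1 = n$ case.

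The case $j = n$ is the main obstacle, because the spin module $V_G(\varpi_n)$ does not arise as a restriction from $Y = A_{2n}$. There I would compute $\ch T(\varpi)$ directly, either by invoking Klimyk's tensor-product formula for the natural module against a fundamental $B_n$-representation, or by a short dimension count: Weyl's formula (Theorem \ref{Weyl's formula}) yields $\dim V_G(\varpi_1 + \varpi_n) = 2n \cdot 2^n$, while $\dim V_G(\varpi_n) = 2^n$ by Lemma \ref{Dimension_of_various_irreducibles_in_type_B}, and the identity $(2n+1)\cdot 2^n = 2n\cdot 2^n + 2^n$ forces the characteristic-zero decomposition $T(\varpi_1 + \varpi_n) \cong V_G(\varpi_1 + \varpi_n) \oplus V_G(\varpi_n)$. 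With the conventions $\varpi_{n+1}=0$ and $\delta_{j,n}=1$, this is precisely the stated formula.
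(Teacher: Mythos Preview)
Your proof is correct and mirrors the paper's approach: tilting via Proposition~\ref{Tensor product of modules with good filtrations}, reduction to characteristic zero, the embedding $G\subset Y$ combined with Lemma~\ref{Tensor_products_in_type_A} and Proposition~\ref{Type_B:restriction_in_characteristic_zero_0<j<n+1} for $j<n$, and a dimension count for $j=n$. The only refinements the paper adds are to cite Seitz directly (rather than Proposition~\ref{Type_B:restriction_in_characteristic_zero_0<j<n+1}, which is stated only for weights $\lambda_1+\lambda_j$) for the summand $V_Y(\lambda_{j+1})|_G$, and in the $j=n$ case to first observe that $\Lambda^+(T(\varpi))=\{\varpi,\varpi_n\}$, which is what makes the dimension identity genuinely conclusive.
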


\begin{proof}
By Lemma \ref{Dimension_of_various_irreducibles_in_type_B}, both $V_G(\varpi_1)$ and $V_G(\varpi_j)$ are irreducible $KG$-modules, and hence $T(\varpi)$ is tilting by Proposition \ref{Tensor product of modules with good filtrations}. Also $\ch T(\varpi)$ is independent of $p,$ so we may and shall assume $K$ has characteristic zero in the remainder of the proof. By \cite[Theorem 1, Table 1 $(\textnormal{I}_2,\textnormal{I}_3)$]{Se} together with Lemma \ref{Tensor_products_in_type_A}, we successively get 
\begin{equation*}
T(\varpi)	\cong	\left(V_Y(\lambda_1)\otimes V_Y(\lambda_j)\right)|_G	\cong 	 V_Y(\lambda)|_G \oplus V_Y(\lambda_{j+1})|_G.
\end{equation*}

Now if $1\leqslant j <n,$ then applying \eqref{[Dn<A2n-1] Weight restrictions for B} yields the restrictions $\lambda|_{T}=\varpi$ and $\lambda_{j+1}|_{T} = \varpi_{j+1}+\delta_{j,n-1}\varpi_n.$ Therefore Proposition \ref{Type_B:restriction_in_characteristic_zero_0<j<n+1} yields $V_Y(\lambda)|_G\cong V_G(\varpi) \oplus V_G(\varpi_{j-1}),$ while $V_Y(\lambda_{j+1})|_G \cong  V_G(\varpi_{j+1}+\delta_{j,n}\varpi_n)$ by \cite[Theorem 1, Table 1 $(\textnormal{I}_2,\textnormal{I}_3)$]{Se}. The assertion thus holds in this situation and so it remains to consider the case where $j=n.$ Here applying Theorem \ref{Weyl's formula} yields 
\begin{equation}
\dim T(\varpi) = \dim V_G(\varpi) + 2^n > \dim V_G(\varpi),
\label{dim T(omega)=dim V(omega) + dim V(omega_n) in type B}
\end{equation} 
showing the existence of a second composition factor of $T(\varpi).$ Now $\varpi$ is the unique highest weight of $T(\varpi)$ and so an application of Lemma \ref{[B]restriction_of_highest_weight} yields $\Lambda^+(T(\varpi))=\{\varpi,\varpi_n\}.$ As $\m_{T(\varpi)}(\varpi)=\m_{V_G(\varpi)}(\varpi)=1,$ the weight $\varpi$ cannot afford the highest weight of a second composition factor of $T(\varpi),$ thus forcing $[T(\varpi),V_G(\varpi_n)]>0.$ Finally,  $\dim V_G(\varpi_n)=2^n $ by Table \ref{Various_dimensions_in_type_B}, and hence \eqref{dim T(omega)=dim V(omega) + dim V(omega_n) in type B} completes the proof.
\end{proof}

\begin{lem}\label{Another_chT(omega)_for_B}
Consider the $T$-weight $\varpi=\varpi_1+2\varpi_n\in X^+(T),$ and write $T(w)=V_G(\varpi_1)\otimes V_G(2\varpi_n).$ Then $T(\varpi)$ is tilting and its formal character is given by 
$$
\ch T(\varpi)=\chi(\varpi) + \chi(\varpi_{n-1})+\chi(2\varpi_n).
$$
\end{lem}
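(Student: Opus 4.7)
The plan is to mirror the strategy used for Lemma \ref{chT(omega)_for_B}, exploiting the embedding $G<Y$ of type $B_n$ inside type $A_{2n}$ and reducing to a tensor product decomposition in type $A$. First, since $p\neq 2$, both $V_G(\varpi_1)$ and $V_G(2\varpi_n)$ are irreducible by Lemma \ref{Dimension_of_various_irreducibles_in_type_B}; they are Weyl modules hence tilting, and Proposition \ref{Tensor product of modules with good filtrations} then gives at once that $T(\varpi)=V_G(\varpi_1)\otimes V_G(2\varpi_n)$ is tilting. As $\ch T(\varpi)$ is independent of $p$, I would then pass to characteristic zero for the character computation.

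Next, I would identify the two factors with restrictions of Weyl modules for $Y$: by \cite[Theorem 1, Table 1 ($\mathrm{I}_2,\mathrm{I}_3$)]{Se} (and Lemma \ref{Dimension_of_various_irreducibles_in_type_B}, comparing dimensions) one has $V_G(\varpi_1)\cong V_Y(\lambda_1)|_G$ and $V_G(2\varpi_n)\cong V_Y(\lambda_n)|_G$. Hence
\[
T(\varpi)\;\cong\;\bigl(V_Y(\lambda_1)\otimes V_Y(\lambda_n)\bigr)\big|_G.
\]
Applying Lemma \ref{Tensor_products_in_type_A} to $Y$ with $j=n$ yields, in characteristic zero,
\[
V_Y(\lambda_1)\otimes V_Y(\lambda_n)\;\cong\;V_Y(\lambda_1+\lambda_n)\oplus V_Y(\lambda_{n+1}).
\]

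It then remains to restrict each summand to $G$ and sum the characters. For the first summand, $(\lambda_1+\lambda_n)|_T=\varpi_1+2\varpi_n=\varpi$, and the case $j=n$ of Proposition \ref{Type_B:restriction_in_characteristic_zero_0<j<n+1} gives
\[
\ch V_Y(\lambda_1+\lambda_n)|_G \;=\;\chi(\varpi)+\chi(\varpi_{n-1}).
\]
For the second summand, $\lambda_{n+1}|_T=2\varpi_n$; since the natural $KY$-module is self-dual as a $KG$-module (as $G$ stabilizes a non-degenerate quadratic form), we have $V_Y(\lambda_{n+1})|_G\cong V_Y(\lambda_n)|_G\cong V_G(2\varpi_n)$, so its character is $\chi(2\varpi_n)$. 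Adding the two contributions yields $\ch T(\varpi)=\chi(\varpi)+\chi(\varpi_{n-1})+\chi(2\varpi_n)$, as required.

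The only mildly delicate point I anticipate is the identification $V_Y(\lambda_{n+1})|_G\cong V_G(2\varpi_n)$, since Lemma \ref{Dimension_of_various_irreducibles_in_type_B} is formulated for the fundamental weights up through $\lambda_n$. If invoking the self-duality argument feels too terse, an equivalent and completely elementary route is a dimension count: Lemma \ref{Tensor_products_in_type_A} together with Lemma \ref{Dimension_of_various_irreducibles_in_type_B} already forces $\dim T(\varpi)-\dim V_G(\varpi)-\dim V_G(\varpi_{n-1})=\dim V_G(2\varpi_n)$, while Lemma \ref{[B]restriction_of_highest_weight} (applied to $V_Y(\lambda_{n+1})$) restricts the possible highest weights of composition factors to weights $\preccurlyeq 2\varpi_n$, of which only $2\varpi_n$ itself can arise by looking at the $T$-weight $2\varpi_n$ in $T(\varpi)$; this pins down the remaining constituent and completes the proof without appealing to the self-duality.
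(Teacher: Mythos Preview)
Your proof is correct and follows essentially the same route as the paper: establish tilting via Lemma \ref{Dimension_of_various_irreducibles_in_type_B} and Proposition \ref{Tensor product of modules with good filtrations}, pass to characteristic zero, decompose $(V_Y(\lambda_1)\otimes V_Y(\lambda_n))|_G$ via Lemma \ref{Tensor_products_in_type_A}, and apply Proposition \ref{Type_B:restriction_in_characteristic_zero_0<j<n+1} to the first summand. The only difference is that the paper dispenses with your closing discussion of $V_Y(\lambda_{n+1})|_G$ by directly invoking Seitz's result \cite[Theorem~1, Table~1 ($\mathrm{I}_2,\mathrm{I}_3$)]{Se}, which covers all fundamental weights $\lambda_i$ for $1\leqslant i\leqslant 2n$ (not just $i\leqslant n$), immediately giving $V_Y(\lambda_{n+1})|_G\cong V_G(2\varpi_n)$; your self-duality argument and dimension count are valid but unnecessary.
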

\begin{proof}
By Lemma  \ref{Dimension_of_various_irreducibles_in_type_B}, both $V_G(\varpi_1)$ and $V_G(2\varpi_n)$ are irreducible $KG$-modules, and hence $T(\varpi)$ is tilting by Proposition \ref{Tensor product of modules with good filtrations}. Also $\ch T(\varpi)$ is independent of $p,$ so we may and shall assume $K$ has characteristic zero in the remainder of the proof. By \cite[Theorem 1, Table 1 $(\textnormal{I}_2,\textnormal{I}_3)$]{Se} together with Lemma \ref{Tensor_products_in_type_A}, we successively get 
\begin{equation*}
T(\varpi)	\cong	\left(V_Y(\lambda_1)\otimes V_Y(\lambda_n)\right)|_G	\cong 	 V_Y(\lambda)|_G \oplus V_G(2\varpi_{n}).
\end{equation*}
Now applying \eqref{[Dn<A2n-1] Weight restrictions for B} yields the restriction $\lambda|_{T}=\varpi$ and so $V_Y(\lambda)|_G\cong V_G(\varpi) \oplus V_G(\varpi_{n-1})$ by  Proposition \ref{Type_B:restriction_in_characteristic_zero_0<j<n+1}, thus completing the proof.
\end{proof}

\subsection{Various contributions to the truncated Jantzen $p$-sum formula}  

In this section, we compute certain contributions to the truncated Jantzen formula for some Weyl modules $V_G(\varpi),$ where $\varpi$ is as in the first column of Table \ref{Table_B}, starting by the case where $\varpi=2\varpi_1.$

\begin{prop}\label{Type_B:Sum_formula_for_(2,0,...,0)} 
Assume $\varpi=2\varpi_1,$ and consider the zero weight $\mu=0.$ Let $V_G(\varpi)=V^0\supsetneq V^1\supseteq \ldots \supseteq V^k \supseteq 0$ be a Jantzen filtration of $V_G(\varpi).$ Then 
$$
\nu^c_\mu(T_\varpi)=\nu_p(2n+1)\ch L_G(\mu).
$$
\end{prop}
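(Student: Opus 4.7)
The plan is to compute $\nu^c_\mu(T_\varpi)$ directly from Theorem \ref{How_to_determine_contributions}, identifying every dominant weight $\nu$ with $0 \preccurlyeq \nu \prec 2\varpi_1$ and computing its contribution $a_\nu$ in the decomposition $\nu^c_\mu(T_\varpi) = \sum a_\nu \chi_\mu(\nu)$. I would work throughout in the standard $\varepsilon$-basis for type $B_n$, in which $\varpi_1 = \varepsilon_1 = \beta_1 + \cdots + \beta_n$, $\rho = \sum_k (n-k+\tfrac12)\varepsilon_k$, and $\mathscr{W}$ acts by signed permutations.

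First I would enumerate the candidate weights. Writing a dominant $\nu = \sum b_i\varepsilon_i$ with $b_1\geqslant\cdots\geqslant b_n\geqslant 0$ and imposing $2\varpi_1 - \nu \in \Gamma$, one checks directly that the only candidates are $\nu \in \{0,\varpi_1,\varpi_2\}$. By Lemma \ref{Dimension_of_various_irreducibles_in_type_B}, each of $V_G(0)$, $V_G(\varpi_1)$, $V_G(\varpi_2)$ is irreducible, so $\chi_\mu(\nu) = \ch L_G(\nu)$ for these three weights. It therefore suffices to prove $a_{\varpi_1}=a_{\varpi_2}=0$ and $a_0 = \nu_p(2n+1)$.

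For each candidate $\nu$, I would apply Theorem \ref{How_to_determine_contributions}, using that $\mathscr{W}$-conjugacy reduces to matching multisets of absolute values of $\varepsilon$-coordinates. The case $\nu=\varpi_2$ is immediate: $\lambda-\nu = \beta_1$ forces $\alpha=\beta_1$ and $r=2$, with trivial contribution since $\nu_p(2)=0$. For $\nu=\varpi_1$ and $\nu=0$, the difference $\lambda-\nu$ has full support, so $\alpha$ ranges over the positive roots of full support, namely $\varepsilon_1$ and $\varepsilon_1+\varepsilon_j$ for $2\leqslant j\leqslant n$. For $\nu=0$, a short case analysis shows that the only pairs $(\alpha,r)$ in $I_0$ are $(\varepsilon_1,2)$ and $(\varepsilon_1,2n+1)$; the former contributes $0$ (as $\nu_p(2)=0$), and the latter contributes $-\nu_p(2n+1)\det(s_{\varepsilon_1}) = \nu_p(2n+1)$, giving the desired coefficient $a_0$.

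The main obstacle is the case $\nu=\varpi_1$, where the contribution is \emph{a priori} nonzero. A careful case analysis on the pair of coordinate equations $|x|,|y|\in\{n+\tfrac12,\,n-j+\tfrac12\}$ with $x-y=j+1$ shows that exactly two pairs lie in $I_{\varpi_1}$: the pair $(\varepsilon_1,2n+2)$, conjugated by $s_{\varepsilon_1}$ of determinant $-1$, and the pair $(\varepsilon_1+\varepsilon_n,n+1)$, conjugated by the signed permutation swapping positions $1$ and $n$ with one sign flip, which has determinant $+1$. Their contributions are $+\nu_p(2n+2)$ and $-\nu_p(n+1)$; since $p$ is odd one has $\nu_p(2n+2)=\nu_p(n+1)$, and they cancel, yielding $a_{\varpi_1}=0$. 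Combining the three cases gives $\nu^c_\mu(T_\varpi) = \nu_p(2n+1)\ch L_G(0) = \nu_p(2n+1)\ch L_G(\mu)$.
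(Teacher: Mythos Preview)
Your argument is correct, and the overall architecture---enumerate the dominant $\nu$ with $0\preccurlyeq\nu\prec 2\varpi_1$, compute each $a_\nu$ via Theorem~\ref{How_to_determine_contributions}, and use irreducibility of the relevant Weyl modules to identify $\chi_\mu(\nu)$ with $\ch L_G(\nu)$---is exactly the paper's. The difference lies in how the two ``extra'' weights $\varpi_2=\varpi-\beta_1$ and $\varpi_1=\varpi-(\beta_1+\cdots+\beta_n)$ are disposed of.

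The paper does not compute $I_{\varpi_1}$ or $I_{\varpi_2}$ at all. Instead it observes that both weights have multiplicity~$1$ in $V_G(2\varpi_1)$, so by Premet's theorem neither can afford the highest weight of a composition factor; a recursive application of Lemma~\ref{no_contribution_for_certain_weights} then forces $a_{\varpi_2}=a_{\varpi_1}=0$ without any coordinate work. Your route is more direct: you compute $I_{\varpi_2}=\{(\beta_1,2)\}$ (harmless since $\nu_p(2)=0$) and $I_{\varpi_1}=\{(\varepsilon_1,2n+2),(\varepsilon_1+\varepsilon_n,n+1)\}$, and then exhibit the cancellation $\nu_p(2n+2)-\nu_p(n+1)=0$ coming from the opposite determinants and the hypothesis $p\neq 2$. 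This avoids invoking \cite{Pr} and Lemma~\ref{no_contribution_for_certain_weights}, at the price of the extra case analysis for $\nu=\varpi_1$. Both approaches then handle $\nu=0$ identically, locating the sole nontrivial contribution at $(\varepsilon_1,2n+1)$.
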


\begin{proof}
We start by computing all contributions to $\nu_\mu^c(T_\varpi).$ Here the  dominant $T$-weights $\nu\in X^+(T)$ such that  $\mu\preccurlyeq \nu \prec \varpi$ are $\varpi-\beta_1,$ $\varpi-(\beta_1+\cdots+\beta_n),$ and $\mu$ itself. Now $\varpi-\beta_1$ and $\varpi-(\beta_1+\cdots+\beta_n)$    have multiplicity $1$ in $V_G(\varpi),$ and hence none of them can afford of $V_G(\varpi)$ the highest weight of a composition factor by \cite{Pr}. Recursively applying Lemma \ref{no_contribution_for_certain_weights} then shows that those same weights cannot contribute to $\nu_\mu^c(T_\varpi).$ Therefore  it remains to compute the contribution of $\mu,$ starting by determining all pairs $(\beta,r)\in I_\mu$ as in Theorem \ref{How_to_determine_contributions}. A straightforward computation yields 
$$
B_\mu = \tfrac{1}{2}(2n-1,2n-3,\ldots,3,1),
$$
and since  $\varpi-\mu$ has support $\Pi,$ we get that  $\beta\in \{\varepsilon_1,\varepsilon_1+\varepsilon_2,\varepsilon_1+\varepsilon_3,\ldots,\varepsilon_1+\varepsilon_n\}$ by definition of $I_\mu.$ Recall from \cite[Planche II]{Bourbaki} that $\mathscr{W}$ acts by all permutations and sign changes of the $\varepsilon_i.$ Also, one checks that for  $2 \leqslant \ell\leqslant n$ and $r\in \mathbb{Z},$  we have 

$$
A_{\varepsilon_1+\varepsilon_\ell,r}= B_\mu +(2-r,\underbrace{0,\ldots,0}_{\ell-2},-r,\underbrace{0,\ldots,0}_{n-\ell}).
$$
Consequently $B_\mu \in \mathscr{W}\cdot A_{\varepsilon_1+\varepsilon_\ell,r}$ if and only if $\{| 2(n-r)+3| , |2(n-\ell-r)+1| \}=\{2n-1,2(n-\ell)+1\}.$  We thus study each possibility separately and show that in each case, no weight in  $\mathscr{W}\cdot(\varpi-\mathbb{Z}(\varepsilon_1+\varepsilon_\ell))$ can contribute to $\nu^c_\mu(T_\varpi).$

\begin{enumerate}
\item If $2(n-r)+3=2n-1,$ then $ r=2$ and hence $\nu_p(r)=0,$ since $p\neq 2$ by assumption. Hence the weight $\varpi-2(\varepsilon_1+\varepsilon_\ell)$ cannot contribute to $\nu^c_\mu(T_\varpi)$ in this situation.
\item If $2(n-r)+3=-2n+1,$ then $r=2n+1,$ so that $|2(n-\ell-r)+1| = 2(n+\ell)+1.$ As it is impossible for the latter to be equal to $2(n-\ell)+1,$ we get the desired assertion in this case.
\item If $2(n-r)+3=2(n-\ell)+1,$ then $r=\ell+1$ and $|2(n -\ell-r)+1|=|2(n-2\ell)-1|.$ The latter cannot equal $2n-1,$ thus showing the assertion in this case as well.
\item If $2(n-r)+3=-2(n-\ell)-1,$ then $r=2n-\ell+2,$ in which case $|2(n- \ell-r)+1|=2n+3\neq 2n-1.$
\end{enumerate} 
Therefore a contribution to 
$\nu^c_\mu(T_\varpi)$ can only come from the situation where  $\beta=\varepsilon_1,$ which we assume holds in the remainder of the proof. Here for $r\in \Z,$ we have
$$
A_{\beta,r}= B_\mu+(2-r,0,\ldots,0),
$$
from which one deduces that  $B_\mu \in \mathscr{W}\cdot A_{\beta,r}$ if and only if $ |2(n-r)+3| =2n-1.$ The latter equality is satisfied if and only if   $r=2$ or $2n+1,$ and since $p\neq 2,$ the weight $\varpi-2\beta$ cannot contribute to the $p$-sum  $\nu_\mu^c(T_\varpi).$ Using the action of $\mathscr{W}$ described above, one checks that  $\mu=s_{\varepsilon_1} \cdot (\varpi-(2n+1)\varepsilon_1),$ so that   $\nu^c_\mu(T_\varpi)=\nu_p(2n+1)\chi _\mu (\mu).$ Finally, as $\chi_\mu(\mu)=\ch L_G(\mu),$ the proof is complete.  
\end{proof}

Proceeding as in the proof of Proposition \ref{Type_B:Sum_formula_for_(2,0,...,0)}, we next compute the formal character $\nu^c_\mu(T_\varpi)$ in the situation where $\varpi=\varpi_1+\varpi_j$ for some $1<j<n,$ and $\mu=\varpi_{j-1}.$ Recall that the case $j=2$ was dealt with in \cite[Lemma 4.5.7]{McNinch}.

\begin{prop}\label{Type_B:Sum_formula_for_(1,0,...,0,1,0,...,0)} 
Assume $\varpi= \varpi_1 + \varpi_j$ for some $1<j<n,$ and write $\mu=\varpi_{j-1}.$ Let $V_G(\varpi)=V^0\supsetneq V^1\supseteq \ldots \supseteq V^k \supseteq 0$ be a Jantzen filtration of $V_G(\varpi).$  Then
$$
\nu^c_\mu(T_\varpi)=\nu_p(j+1)\ch L_G(\varpi_{j+1} + \delta_{j,n-1}\varpi_n) + \nu_p(2n-j+2)\ch L_G(\mu).
$$
\end{prop}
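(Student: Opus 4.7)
The plan is to mimic the approach of Proposition \ref{Type_B:Sum_formula_for_(2,0,...,0)}, combining the tensor product character of Lemma \ref{chT(omega)_for_B} with Theorem \ref{How_to_determine_contributions}. Writing $T(\varpi) = V_G(\varpi_1) \otimes V_G(\varpi_j)$, Lemma \ref{chT(omega)_for_B} yields
$$
\ch T(\varpi) = \chi(\varpi) + \chi(\mu) + \chi(\varpi_{j+1} + \delta_{j,n-1}\varpi_n),
$$
and since both $V_G(\mu)$ and $V_G(\varpi_{j+1} + \delta_{j,n-1}\varpi_n)$ are irreducible by Lemma \ref{Dimension_of_various_irreducibles_in_type_B}, Proposition \ref{chT(omega)_as_a_sum_of_xi's} shows that every composition factor of $V_G(\varpi)$ has highest weight in $\{\varpi, \mu, \varpi_{j+1} + \delta_{j,n-1}\varpi_n\}$, while also yielding $\chi_\mu(\mu) = \ch L_G(\mu)$ and $\chi_\mu(\varpi_{j+1} + \delta_{j,n-1}\varpi_n) = \ch L_G(\varpi_{j+1} + \delta_{j,n-1}\varpi_n)$.

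An iterative use of Lemma \ref{no_contribution_for_certain_weights} combined with Proposition \ref{Insight_on_possible_composition_factors} then rules out any contribution to $\nu^c_\mu(T_\varpi)$ from a dominant weight $\nu \in (\mu, \varpi)$ distinct from $\varpi_{j+1} + \delta_{j,n-1}\varpi_n$. Indeed, any maximal such $\nu^*$ would, by the irreducibilities noted above together with the vanishing of $[V_G(\xi), L_G(\nu^*)]$ for $\xi \not\succcurlyeq \nu^*$, produce a coefficient of $\ch L_G(\nu^*)$ in $\nu_\mu^c(T_\varpi)$ equal to its contribution $a_{\nu^*}$, forcing $\nu^*$ to be a composition factor of $V_G(\varpi)$ and contradicting the previous step. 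It therefore suffices to compute $a_\mu$ and $a_{\varpi_{j+1} + \delta_{j,n-1}\varpi_n}$ via Theorem \ref{How_to_determine_contributions}.

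For $\nu = \varpi_{j+1} + \delta_{j,n-1}\varpi_n$, the support condition $\textnormal{supp}(\beta) = \{\beta_1,\ldots,\beta_j\}$ singles out $\beta = \varepsilon_1 - \varepsilon_{j+1}$ uniformly across the subcases $j < n-1$ and $j = n-1$, and a coordinatewise comparison of $A_{\beta,r}$ with $B_\nu$ selects $r = j + 1$, the associated Weyl element being the transposition $(1, j+1)$ of determinant $-1$, producing $a_\nu = \nu_p(j+1)$. For $\nu = \mu$, the condition $\textnormal{supp}(\beta) = \Pi$ restricts $\beta$ to $\{\varepsilon_1\} \cup \{\varepsilon_1 + \varepsilon_\ell : 2 \leqslant \ell \leqslant n\}$; the crux of the computation is to verify that the coordinate entry $(2n - 2j + 3)/2$ appearing at position $j$ of $A_{\beta,r}$ whenever $\beta \neq \varepsilon_1 + \varepsilon_j$ is absent from the multiset of $B_\mu$, thereby precluding a Weyl conjugation, so that only the pair $(\beta, r) = (\varepsilon_1 + \varepsilon_j, 2n - j + 2)$ contributes, via the signed transposition at coordinates $1$ and $j$ (determinant $-1$), yielding $a_\mu = \nu_p(2n - j + 2)$. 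The main obstacle is precisely this combinatorial check for $\nu = \mu$; once it is carried out, substituting the values $\chi_\mu(\nu) = \ch L_G(\nu)$ immediately produces the asserted formula.
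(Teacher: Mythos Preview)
Your proof is correct, and the coordinate computations for $I_{\nu_1}$ and $I_\mu$ match the paper's in substance. The genuine difference lies in how you eliminate contributions from dominant weights in $[\mu,\varpi)$ other than $\mu$ and $\nu_1=\varpi_{j+1}+\delta_{j,n-1}\varpi_n$. The paper explicitly enumerates these weights (for $j>2$ the only additional one is $\nu_2=\varpi_1+\varpi_{j-2}$) and rules out $\nu_2$ by restricting to the $B_{n-j+2}$-Levi subgroup corresponding to $\beta_{j-1},\ldots,\beta_n$, where Lemma~\ref{Dimension_of_various_irreducibles_in_type_B} shows $\nu_2$ is not a composition factor; Lemma~\ref{no_contribution_for_certain_weights} then applies directly. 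You instead front-load Lemma~\ref{chT(omega)_for_B} together with Proposition~\ref{chT(omega)_as_a_sum_of_xi's} to constrain the composition factors of $V_G(\varpi)$ globally, and then run a variant of the Lemma~\ref{no_contribution_for_certain_weights} argument that exploits the irreducibility of $V_G(\nu_1)$ to conclude $b_{\nu^*}=a_{\nu^*}$ for any maximal extraneous $\nu^*$. Your route avoids both the enumeration of dominant weights and the Levi restriction, at the price of invoking the tilting-module computation inside the proof; the paper keeps the proposition self-contained and defers Lemma~\ref{chT(omega)_for_B} to the proof of Theorem~\ref{Main_Result_1}. Note also that the paper offloads the case $j=2$ to \cite[Lemma 4.5.7]{McNinch}, whereas your argument treats all $1<j<n$ uniformly, and that the paper spells out the case analysis for the set equalities $\{|2(n-r)+3|,\,|2(n-j\pm r)\mp 1|\}=\{2n+1,\,2(n-j)+1\}$ in more detail than your sketch.
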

 
\begin{proof}
We refer the reader to \cite[Lemma 4.5.7]{McNinch} for a proof in the case where $j=2$ and hence assume $2<j<n$ in what follows.  Here the dominant $T$-weights $\nu\in X^+(T)$ such that $\mu \preccurlyeq \nu \prec \varpi$ are $\nu_1=\varpi_{j+1} + \delta_{j,n-1}\varpi_n,$ $\nu_2=\varpi_1+\varpi_{j-2},$ and  $\mu$ itself. We compute the contribution of each of those weights, starting by determining all pairs $(\beta,r)\in I_{\nu_1}$ as in Theorem  \ref{How_to_determine_contributions}. A straightforward computation yields

$$
B_{\nu_1}=\tfrac{1}{2}(2n+1,2n-1,\ldots,\underbrace{2n-2j+1}_{(j+1)^\textnormal{th}\mbox{ } \textnormal{coord.}},2n-2j-3,\ldots ),
$$
and since $\varpi-\nu_1$ has support $\{\beta_1,\ldots,\beta_j\},$ we get that $\beta=\varepsilon_1-\varepsilon_{j+1}$ by definition of $I_{\nu_1}.$ Also, one easily checks that for $r\in \Z,$ we have 
$$
A_{\beta,r}=\tfrac{1}{2}(2n+3-2r,2n-1,\ldots,\underbrace{2n-2j+3}_{j^\textnormal{th}\mbox{ } \textnormal{coord.}},2n-2j-1+2r,2n-2j-3,\ldots,1).
$$ 
Therefore $B_{\nu_1}\in \mathscr{W}\cdot A_{\beta,r}$ if and only if $\{|2(n-r)+3|,|2(n-j+r)-1|\}=\{2n+1,2(n-j)+1\}.$ Again, we deal with each possibility separately.

\begin{enumerate}
\item If $2(n-r)+3=2n+1,$ then $ r=1$ and so $\nu_p(r)=0.$ Hence the weight $\varpi-\beta$ cannot contribute to $\nu^c_\mu(T_\varpi)$ in this situation.
\item If $2(n-r)+3=-2n-1,$ then $r=2(n+1),$ so that $ 2(n-j+r)-1  = 6n-2j+3.$ As it is impossible for the latter to be equal to $2(n-j)+1,$ we get the that $\varpi-2(n+1)\beta$ does not contribute to the truncated sum either.
\item If $2(n-r)+3=2(n-j)+1,$ then $r=j+1$ and $2(n-j+r)-1=2n+1,$ in which case  one checks that $B_\mu =s_{\varepsilon_1-\varepsilon_{j+1}}\cdot A_{\beta,j+1}.$ 
\item If $2(n-r)+3=-2(n-j)-1,$ then $r=2n-j+2 ,$ in which case $2(n-j+r)-1= 6n-4j+3 \neq 2n+1.$ Therefore $\varpi-(2n-j+2)\beta$ does not contribute in this case.
\end{enumerate} 

Consequently $I_{\nu_1}=\{(\varepsilon_1-\varepsilon_{j+1},j+1)\}$ and hence an application of Theorem \ref{How_to_determine_contributions} (where we take $w_{\varepsilon_1-\varepsilon_{j+1},j+1}= s_{\varepsilon_1-\varepsilon_j}$ yields the contribution $\nu_p(j+1)$ for $\nu_1$ to $\nu^c_\mu(T_\varpi).$
\vspace{5mm}

Next applying Lemma \ref{Dimension_of_various_irreducibles_in_type_B} to the $B_{n-j+2}$-Levi subgroup of $G$ corresponding to the simple roots $\beta_{j-1},\ldots,\beta_n,$ we get that $\nu_2$ does not afford the highest weight of a composition factor of $V_G(\varpi).$ Therefore $\nu_2$ cannot contribute to $\nu_\mu^c(T_\varpi)$ by Lemma \ref{no_contribution_for_certain_weights} and it only remains to consider the dominant $T$-weight $\mu.$ Here we have
$$
B_\mu=\tfrac{1}{2}(2n+1,2n-1,\ldots,\underbrace{2(n-j)+5,}_{(j-1)^\textnormal{th}\mbox{ } \textnormal{coord.}}2(n-j)+1, 2(n-j)-1,\ldots,3,1),
$$
and since $\varpi-\mu$ has support $\Pi,$ we get that $\beta\in \{\varepsilon_1,\varepsilon_1+\varepsilon_2,\varepsilon_1+\varepsilon_3,\ldots,\varepsilon_1+\varepsilon_n\}.$ Also for $r\in \Z,$ we have 
$$
A_{\varepsilon_1,r}=B_\mu +(1-r,\underbrace{0,\ldots,0}_{j-2 },1,0,\ldots,0).
$$
Hence $|(A_{\varepsilon_1,r})_j|$ is distinct from $(B_{\mu})_t$ for all $1\leqslant t\leqslant n,$ showing that  $\mu\notin \mathscr{W}\cdot (\varpi-\Z\varepsilon_1).$ Arguing in a similar fashion, one checks that if $\beta= \varepsilon_1+\varepsilon_\ell$ for some $2\leqslant \ell\leqslant n$ different from $j,$ then $\mu\notin \mathscr{W}\cdot (\varpi-\beta\mathbb{Z} ).$ Finally,   consider $\beta=\varepsilon_1+\varepsilon_j,$ in which case
$$
A_{\beta,r}=B_\mu +(1-r,\underbrace{0,\ldots,0}_{j-2 },1-r,0,\ldots,0),~r\in \Z.
$$
Therefore  $B_\mu \in \mathscr{W} \cdot A_{\beta,r}$ if and only if $\{|2(n-r)+3|,|2(n-j-r)+3|\}=\{2n+1,2(n-j)+1\}.$ Now clearly $2(n-r)+3\neq 2n+1,$ for it would force $r=1.$ Also if $2(n-r)+3=-2n-1,$ then $r=2(n+1),$ in which case $|2(n-j-r)+3|= 2(n+j)+1\neq  2(n-j)+1 .$ If on the other hand $2(n-r)+3=2(n-j)+1,$ then $r=j+1,$ and again one checks that $|2(n-j-r)+3| \neq 2n+1$ in this situation. Finally, arguing in a similar fashion shows that $ B_\mu \in \mathscr{W}\cdot A_{\beta,r}$ if and only if $r=2n-j+2,$ in which case $\mu=(s_{\varepsilon_1} s_{\varepsilon_j} s_{\varepsilon_1-\varepsilon_j}) \cdot (\varpi-r\beta).$ Consequently 
$$
\nu_{\mu}^c(T_\varpi)=\nu_p(j+1)\chi_\mu(\varpi_{j+1} + \delta_{j,n-1}\varpi_n) + \nu_p(2n-j+2)\chi_\mu(\mu).
$$
In order to conclude, first observe that $\chi_\mu(\mu)=\ch L_G(\mu)$ by definition, while Lemma \ref{Dimension_of_various_irreducibles_in_type_B} yields $\chi_\mu(\varpi_{j+1} + \delta_{j,n-1}\varpi_n)=\ch L_G(\varpi_{j+1} + \delta_{j,n-1}\varpi_n).$ The proof is complete.
\end{proof}

Proceeding as in the proofs of Proposition \ref{Type_B:Sum_formula_for_(2,0,...,0)}, we next compute the formal character $\nu^c_\mu(T_\varpi)$ in the situation where $\varpi=\varpi_1+\varpi_n$  and $\mu=\varpi_{n}.$

\begin{prop}\label{Type_B:Sum_formula_for_(1,0,...,0,1)} 
Assume $\varpi= \varpi_1 + \varpi_n,$ and write $\mu=\varpi_n.$ Let $V_G(\varpi)=V^0\supsetneq V^1\supseteq \ldots \supseteq V^k \supseteq 0$ be a Jantzen filtration of $V_G(\varpi).$  Then 
$$
\nu^c_\mu(T_\varpi)=\nu_p(2n+1) \ch L_G(\mu).
$$
\end{prop}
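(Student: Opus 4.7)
I plan to proceed exactly as in the proofs of Propositions \ref{Type_B:Sum_formula_for_(2,0,...,0)} and \ref{Type_B:Sum_formula_for_(1,0,...,0,1,0,...,0)}. The first step is to enumerate the dominant $T$-weights $\nu \in X^+(T)$ satisfying $\mu \preccurlyeq \nu \prec \varpi$. Since $\varpi - \mu = \varpi_1 = \beta_1 + \beta_2 + \cdots + \beta_n$, any such weight has the form $\nu = \mu + \sum_{i=1}^n c_i \beta_i$ with $c_i \in \{0, 1\}$. A short case analysis using the concavity conditions $2 c_i \geq c_{i-1} + c_{i+1}$ implied by the dominance of $\nu$ (together with the boundary requirements coming from $\langle \nu, \beta_1 \rangle \geq 0$ and from the last coordinate being non-negative) forces either all $c_i = 0$ or all $c_i = 1,$ so that $\mu$ is the only dominant weight in the half-open interval between $\mu$ and $\varpi$ that we need to consider.

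The second step is to apply Theorem \ref{How_to_determine_contributions} in order to compute the contribution of $\mu$ to $\nu^c_\mu(T_\varpi).$ A direct calculation yields $B_\mu = (n, n-1, \ldots, 1)$ and $\varpi + \rho = (n+1, n-1, n-2, \ldots, 1)$ in the standard $\varepsilon$-basis, and since $\varpi - \mu = \varepsilon_1$ has support $\Pi,$ the admissible positive roots $\beta$ are $\varepsilon_1$ together with $\varepsilon_1 + \varepsilon_j$ for $2 \leq j \leq n.$ For $\beta = \varepsilon_1,$ the condition $B_\mu \in \mathscr{W} \cdot A_{\beta,r}$ reduces to $|n+1-r| = n,$ giving only $r = 2n+1$ in the admissible range, with $w_{\beta, 2n+1} = s_{\varepsilon_1}$ and $\det(w_{\beta, 2n+1}) = -1.$ For $\beta = \varepsilon_1 + \varepsilon_j$ with $2 \leq j \leq n,$ one compares the multiset $\{|n+1-r|, |n-j+1-r|\}$ with $\{n, n-j+1\}$; a case-by-case check on the two possible matchings rules out every value of $r$ in the admissible range, essentially because the resulting linear conditions on $r$ and $j$ have no integer solutions with $j \geq 2.$

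Assembling these contributions gives $\nu^c_\mu(T_\varpi) = \nu_p(2n+1) \chi_\mu(\mu),$ and since $\chi_\mu(\mu) = \ch L_G(\mu)$ by definition, the desired formula follows. The main (mild) obstacle is the case analysis for the roots $\varepsilon_1 + \varepsilon_j,$ where one must carefully verify that neither of the two possible pairings of the multisets yields an admissible $r.$ As this is the same routine computation carried out in Proposition \ref{Type_B:Sum_formula_for_(1,0,...,0,1,0,...,0)}, no genuine conceptual difficulty is expected.
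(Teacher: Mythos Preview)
Your proposal is correct and follows essentially the same approach as the paper's proof: the paper likewise observes that $\Lambda^+(\varpi)=\{\varpi,\mu\}$, computes $B_\mu=(n,n-1,\ldots,1)$, restricts to the roots $\varepsilon_1$ and $\varepsilon_1+\varepsilon_\ell$ by the support condition, finds that only $(\varepsilon_1,2n+1)$ contributes (with $w=s_{\varepsilon_1}$), and concludes via $\chi_\mu(\mu)=\ch L_G(\mu)$. The only cosmetic difference is that you spell out the concavity argument for the enumeration of dominant weights in the interval, whereas the paper simply asserts $\Lambda^+(\varpi)=\{\varpi,\mu\}$; conversely, the paper displays the explicit formula for $A_{\varepsilon_1+\varepsilon_\ell,r}$ and defers the multiset comparison to the reader, while you sketch the case analysis directly.
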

 
\begin{proof}
We proceed as in the proof of Propositions \ref{Type_B:Sum_formula_for_(2,0,...,0)} and \ref{Type_B:Sum_formula_for_(1,0,...,0,1,0,...,0)}. Here  $\Lambda^+(\varpi) = \{\varpi,\mu\},$ thus we only need to   compute the contribution of  all pairs $(\beta,r)\in I_{\mu}$ as in Theorem  \ref{How_to_determine_contributions}. We have
$$
B_\mu=(n,n-1,\ldots,2,1),
$$
and since $\varpi-\mu=\beta_1+\cdots+\beta_n$ has support $\Pi,$ we get that  $\beta\in \{\varepsilon_1,\varepsilon_1+\varepsilon_2,\varepsilon_1+\varepsilon_3,\ldots,\varepsilon_1+\varepsilon_n\} $ by definition of $I_\mu.$  Let then $2\leqslant \ell\leqslant n$ be fixed and assume $\beta=\varepsilon_1+\varepsilon_\ell,$ in which case one checks that we have 
$$
A_{\beta,r}= (n+1-r,n-1,n-2,\ldots,2,1) + (\underbrace{0,\ldots,0}_{\ell-1},-r,0,\ldots,0),~r \in \Z.
$$
Consequenly  $B_\mu \in \mathscr{W}\cdot A_{\beta,r}$ if and only if $\{|n+1-r|,|n+1-\ell-r|\}=\{n,n+1-\ell\}.$ Proceeding exactly as in the proofs of Propositions \ref{Type_B:Sum_formula_for_(2,0,...,0)} and \ref{Type_B:Sum_formula_for_(1,0,...,0,1,0,...,0)}, one easily shows that the latter set equality never holds. Therefore $\mu\notin \mathscr{W}\cdot (\varpi-r\beta)$ for $r\in \Z$  in this situation. Finally, we also  leave to the reader to check that $ B_\mu \in \mathscr{W}\cdot  A_{\varepsilon_1,r}$ if and only if $r=2n+1,$ in which case $\mu=s_{\varepsilon_1}\cdot (\varpi-r\varepsilon_1).$ Hence $\nu^c(T_\varpi)=\nu_p(2n+1)\chi(\mu),$ and since $V_G(\mu)$ is irreducible by Lemma \ref{Dimension_of_various_irreducibles_in_type_B}, we get that $\chi(\mu)=\ch L_G(\mu),$ from which the result follows.
\end{proof}

We conclude this section by computing the character $\nu^c_\mu(T_\varpi)$ in the situation where $\varpi=\varpi_1+2\varpi_n$  and $\mu=\varpi_{n-1}.$ Again, we proceed as in the proofs of Propositions \ref{Type_B:Sum_formula_for_(2,0,...,0)}, \ref{Type_B:Sum_formula_for_(1,0,...,0,1,0,...,0)}, and \ref{Type_B:Sum_formula_for_(1,0,...,0,1)}.

\begin{prop}\label{Type_B:Sum_formula_for_(1,0,...,0,2)} 
Assume $\varpi= \varpi_1 + 2\varpi_n,$ and write $\mu=\varpi_{n-1}.$ Let $V_G(\varpi)=V^0\supsetneq V^1\supseteq \ldots \supseteq V^k \supseteq 0$ be a Jantzen filtration of $V_G(\varpi).$  Then 
$$
\nu^c_{\mu}(T_\varpi)= \nu_p(n+1)\ch L_G(2\varpi_n) + \nu_p( n+2) \ch L_G(\mu).
$$
\end{prop}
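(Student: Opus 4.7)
The plan is to follow closely the strategy of Propositions \ref{Type_B:Sum_formula_for_(2,0,...,0)}, \ref{Type_B:Sum_formula_for_(1,0,...,0,1,0,...,0)}, and \ref{Type_B:Sum_formula_for_(1,0,...,0,1)}: apply Theorem \ref{How_to_determine_contributions} to compute, for each dominant weight $\nu$ satisfying $\mu \preccurlyeq \nu \prec \varpi$, its contribution to $\nu_\mu^c(T_\varpi)$, and then use the irreducibility results of Lemma \ref{Dimension_of_various_irreducibles_in_type_B} to convert each $\chi_\mu$ into the character of a simple module. Writing $\varpi-\mu=\varepsilon_1+\varepsilon_n=\beta_1+\cdots+\beta_{n-1}+2\beta_n$ in the $\varepsilon$-basis and enumerating the dominant $\mathbb{Z}_{\geqslant 0}$-combinations of simple roots that may be subtracted from $\varpi$, I expect the dominant weights of the interval to be precisely $\nu_1=2\varpi_n$, $\nu_2=\varpi_1+\varpi_{n-1}$, and $\mu$ itself.

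I would first rule out $\nu_2$. Since $\nu_2=\varpi-\beta_n$ with $\langle\varpi,\beta_n\rangle=2$, Weyl's character formula gives $\m_{V_G(\varpi)}(\nu_2)=1$, so \cite{Pr} forces $\nu_2$ not to afford the highest weight of a composition factor of $V_G(\varpi)$; since $\varpi$ is the only dominant weight strictly above $\nu_2$ in $[\mu,\varpi]$, Lemma \ref{no_contribution_for_certain_weights} then implies that $\nu_2$ cannot contribute. (Alternatively, $\varpi-\nu_2=\beta_n$ has support $\{\beta_n\}$, forcing the only candidate in $I_{\nu_2}$ to be $\beta=\varepsilon_n$, and a direct multiset check leaves only $r\in\{1,2\}$, both with $\nu_p(r)=0$ since $p\neq 2$.) The contributions of $\nu_1$ and $\mu$ are then computed directly via Theorem \ref{How_to_determine_contributions}. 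For $\nu_1$, $\varpi-\nu_1=\varepsilon_1$ has full support, so $\beta$ ranges over the positive roots of full support, namely $\varepsilon_1$ and $\varepsilon_1+\varepsilon_j$ for $2\leqslant j\leqslant n$; a multiset comparison between $B_{\nu_1}=\tfrac{1}{2}(2n+1,2n-1,\ldots,3)$ and $A_{\beta,r}$ leaves only the pair $(\varepsilon_1,2n+2)$, yielding contribution $\nu_p(2n+2)=\nu_p(n+1)$ (as $p\neq 2$) via $w=s_{\varepsilon_1}$ of determinant $-1$. For $\mu$, $\varpi-\mu=\varepsilon_1+\varepsilon_n$ again has full support; here $|B_\mu|=\tfrac{1}{2}\{2n+1,2n-1,\ldots,5,1\}$ does not contain $3/2$, which eliminates $\beta=\varepsilon_1$ and every $\beta=\varepsilon_1+\varepsilon_j$ with $j<n$ (since in each such case the ``unchanged'' coordinates of $A_{\beta,r}$ retain $3/2$), leaving only $(\varepsilon_1+\varepsilon_n,n+2)$ with $w=s_{\varepsilon_1}s_{\varepsilon_n}s_{\varepsilon_1-\varepsilon_n}$ of determinant $-1$ and contribution $\nu_p(n+2)$.

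To finish, since $V_G(2\varpi_n)$ is irreducible by Lemma \ref{Dimension_of_various_irreducibles_in_type_B}, one has $\chi_\mu(2\varpi_n)=\ch L_G(2\varpi_n)$, and $\chi_\mu(\mu)=\ch L_G(\mu)$ holds by definition of the truncated character; combining these yields the claimed identity. The main obstacle will be the multiset-matching casework for the contribution of $\mu$: several $(\beta,r)$ pairs must be ruled out, and the structural feature that makes everything collapse to a single pair is the absence of $3/2$ from $|B_\mu|$, which must be carefully exploited.
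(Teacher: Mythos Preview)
Your proposal is correct and follows essentially the same route as the paper: rule out $\varpi-\beta_n$ via \cite{Pr} and Lemma~\ref{no_contribution_for_certain_weights}, then use Theorem~\ref{How_to_determine_contributions} to find the unique contributing pair $(\varepsilon_1,2n+2)$ for $\nu_1=2\varpi_n$ and the unique pair $(\varepsilon_1+\varepsilon_n,n+2)$ for $\mu$, before converting $\chi_\mu(\nu_1)$ and $\chi_\mu(\mu)$ to simple characters. The only cosmetic differences are that you eliminate the wrong $\beta$'s for $\mu$ via the clean ``$\tfrac{3}{2}$ is absent from $|B_\mu|$'' observation rather than a case analysis, and you justify $\chi_\mu(2\varpi_n)=\ch L_G(2\varpi_n)$ via the irreducibility of $V_G(2\varpi_n)$ from Lemma~\ref{Dimension_of_various_irreducibles_in_type_B} instead of the paper's multiplicity-one argument with \cite{Pr}.
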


\begin{proof}
We proceed as usual, starting by observing that the dominant weights $\nu\in X^+(T)$ satisfying $\mu\preccurlyeq \nu \prec \varpi$ are  $\varpi-\beta_n,$ and  $\nu_1=2\varpi_n.$ Since $\varpi-\beta_n$ has multiplicity $1$ in $V_G(\varpi),$ it cannot afford the highest weight of a composition factor of $V_G(\varpi)$ by \cite{Pr}, and hence does not contribute to $\nu_{\mu}^c(T_\varpi)$ by Lemma \ref{no_contribution_for_certain_weights}. We  next compute the contribution of all pairs $(\beta,r)\in I_{\nu_1}$ as in Theorem  \ref{How_to_determine_contributions}. Here 
$$
B_{\nu_1}=\tfrac{1}{2}(2n+1,2n-1,\ldots,3),
$$
and since $\varpi-\nu_1=\beta_1+\cdots+\beta_n$ has support $\Pi,$ we get that  $\beta\in \{\varepsilon_1,\varepsilon_1+\varepsilon_2,\varepsilon_1+\varepsilon_3,\ldots,\varepsilon_1+\varepsilon_n\} $ by definition of $I_{\nu_1}.$  Now for any pair $(\beta,r)$ in $I_{\nu_1},$ we have 
$$
A_{\beta,r}= \tfrac{1}{2}(2n+3,2n-1,2n-3,\ldots,3)-r\beta.
$$
One checks that $B_{\nu_1}$ and $A_{\beta,r}$ are $\mathscr{W}$-conjugate for the dot action if and only if $\beta=\varepsilon_1$ and $r=2(n+1),$ in which case $\nu_1=s_{\varepsilon_1}\cdot (\varpi-2(n+1)\beta).$ Hence the $T$-weight $\nu_1$ contributes by $ \nu_p(n+1)$ to $\nu_\mu^c(T_\varpi)$ by Theorem \ref{How_to_determine_contributions}. We  now compute the contribution of all pairs $(\beta,r)\in I_{\mu}$ as in Theorem  \ref{How_to_determine_contributions}. Here we have
$$
B_{\mu}=\tfrac{1}{2}(2n+1,2n-1,\ldots,5,1).
$$
Again, since $\varpi-\mu$ has support $\Pi,$ we get that  $\beta\in \{\varepsilon_1,\varepsilon_1+\varepsilon_2,\varepsilon_1+\varepsilon_3,\ldots,\varepsilon_1+\varepsilon_n\} $ by definition of $I_{\mu}.$ 
Now for any pair $(\beta,r)$ in $I_{\mu},$ we have 
$$
A_{\beta,r}= \tfrac{1}{2}(2n+3,2n-1,2n-3,\ldots,3)-r\beta,
$$
and hence if $B_{\mu}$ and $A_{\beta,r}$ are $\mathscr{W}$-conjugate for the dot action, then $\beta=\varepsilon_1+\varepsilon_n,$ which we shall assume holds in what follows. One then checks that $B_{\mu} \in \mathscr{W}\cdot A_{\beta,r}$ if and only if $r=n+2,$ and that $\mu=(s_{\varepsilon_1}s_{\varepsilon_n}s_{\varepsilon_1-\varepsilon_n})\cdot (\varpi-(n+2)\beta)$ in this situation. In particular, we get that $\mu$ contributes by $\nu_p(n+2) $ to $\nu^c_{\mu}(T_\varpi)$ by Theorem  \ref{How_to_determine_contributions}.  Therefore $\nu^c_{\mu}(T_\varpi)=\nu_p(n+1)\chi_{\mu}(\nu_1) + \nu_p(n+2)\chi_{\mu}(\mu),$ and since $\m_{V_G(\nu_1)}(\mu)=1,$ an application of \cite{Pr} yields $\chi_{\mu}(\nu_1)=\ch L_G(\nu_1).$ Also we clearly have $\chi_{\mu}(\mu)=\ch L_G(\mu),$ thus completing the proof.
\end{proof}

\subsection{Conclusion}     

We conclude this section by giving proofs of Theorem \ref{Main_Result_1},  Corollary  \ref{Introduction: corollary on the dimension (B)}, and Proposition \ref{Restrictions_of_irreducibles_of_type_A}, in the situation where $G$ is of type $B_n$ $(n\geqslant 2)$ and $Y$ is of type $A_{2n}.$

\begin{proof}[Proof of Theorem \textnormal{\ref{Main_Result_1}} (The $B_n$-case)]
We deal with each row of Table \ref{Table_B} separately, starting with the case where $\varpi=2\varpi_1.$ Here by Lemma \ref{chT(omega)_for_B}, we have $\ch T(\varpi) =\chi(\varpi)+\ch L_G(0),$  and an application of Proposition \ref{chT(omega)_as_a_sum_of_xi's} yields $[V_G(\varpi),L_G(\mu)]=0$ for   $\mu\in X^+(T)$ different from $\varpi$ or $0.$ Moreover, the same proposition also shows that if $0$ affords the highest weight of a composition factor of $V_G(\varpi),$ then $[V_G(\varpi),L_G(0)]=1.$ Now 
$$
\nu_0^c(T_\varpi)=\nu_p(2n+1)\ch L_G(0)
$$
by Proposition \ref{Type_B:Sum_formula_for_(2,0,...,0)}, and so the zero weight  affords the highest weight of a composition factor if and only if $p$ divides $ 2n+1$ by Proposition \ref{Insight_on_possible_composition_factors}. The desired assertion thus holds in this situation.

We next consider the case where $\varpi=\varpi_1+\varpi_j$ for some $1<j<n.$ Here again, an application of Lemma \ref{chT(omega)_for_B}   yields $\ch T(\varpi) = \chi(\varpi) + \chi(\varpi_{j-1})+ \chi(\varpi_{j+1} + \delta_{j,n-1}\varpi_n),$ so that $[V_G(\varpi),L_G(\mu)]=0$ for  $\mu\in X^+(T)$ different from $\varpi,$ $\varpi_{j-1},$ or $\varpi_{j+1}+\delta_{j,n-1}\varpi_n,$ by Proposition  \ref{chT(omega)_as_a_sum_of_xi's}. The latter also shows that if  $\mu\in \{\varpi_{j-1},\varpi_{j+1}+\delta_{j,n-1}\varpi_n\}$ affords the highest weight of a composition factor of $V_G(\varpi),$ then $[V_G(\varpi),L_G(\mu)]=1.$ Applying Proposition \ref{Type_B:Sum_formula_for_(1,0,...,0,1,0,...,0)} yields 
$$
\nu_{\varpi_{j-1}}^c(T_\varpi)= \nu_p(j+1)\ch L_G(\varpi_{j+1}+\delta_{j,n-1}\varpi_n) + \nu_p(2n-j+2)\ch L_G(\varpi_{j-1}).
$$
Arguing as in the previous case, one gets that  $V_G(\varpi)=\varpi/(\varpi_{j+1}+\delta_{j,n-1}\varpi_n)^{\epsilon_p(j+1)}/ \varpi_{j-1}^{\epsilon_p(2n-j+2)}$ by Proposition \ref{Insight_on_possible_composition_factors}. Finally,  both   $V_G(\varpi_{j+1}+\delta_{j,n-1}\varpi_n)$ and $V_G(\varpi_{j-1})$ are irreducible by Lemma \ref{Dimension_of_various_irreducibles_in_type_B}, and hence $\Ext^1_G(L_G(\varpi_{j-1}),L_G(\varpi_{j+1}+\delta_{j,n-1}\varpi_n))=0$  by Proposition \ref{Extensions between two irreducibles}. The desired result thus holds in this case as well.
\vspace{5mm}

Next consider the dominant $T$-weight $\varpi=\varpi_1+\varpi_n.$ Here $\ch T(\varpi)=\chi(\varpi)+\chi(\varpi_n) $ by Lemma \ref{chT(omega)_for_B}, so that  $[V_G(\varpi),L_G(\mu)]=0$ for $\mu\in X^+(T)$ different from $\varpi$ or $\varpi_n$ by Proposition \ref{chT(omega)_as_a_sum_of_xi's}. The latter also shows that if  $\varpi_n$ affords the highest weight of a composition factor of $V_G(\varpi),$ then $[V_G(\varpi),L_G(\varpi_n)]=1.$ Applying Proposition \ref{Type_B:Sum_formula_for_(1,0,...,0,1)} yields 
$$
\nu_{\varpi_n}^c(T_\varpi)= \nu_p(2n+1)\ch L_G(\varpi_n),
$$
and so $\varpi_n$ affords the highest weight of a composition factor of $V_G(\varpi)$ if and only if $p$ divides $2n+1$ by Proposition \ref{Insight_on_possible_composition_factors}. Hence the assertion holds as desired.  
\vspace{5mm}

Finally, we consider the situation where $\varpi=\varpi_1+2\varpi_n.$ Here an application of Lemma \ref{Another_chT(omega)_for_B} yields $\ch T(\varpi)=\chi(\varpi)+ \chi(\varpi_{n-1})+\chi(2\varpi_n),$ while $[V_G(\varpi),L_G(\mu)]=0$ for $\mu\in X^+(T)$ different from $\varpi,$ $\varpi_{n-1},$ or $2\varpi_{n} $  by Proposition \ref{Type_B:Sum_formula_for_(1,0,...,0,1,0,...,0)}. The latter also shows that if  $\mu=2\varpi_n$ affords the highest weight of a composition factor of $V_G(\varpi),$ then $[V_G(\varpi),L_G(\mu)]=1.$ Applying Proposition \ref{Type_B:Sum_formula_for_(1,0,...,0,2)}  yields 
$$
\nu^c_{2\varpi_n}(T_\varpi)= \nu_p(n+1)\ch L_G(\varpi_{n-1}) + \nu_p( n+2) \ch L_G(2\varpi_n).
$$
Using Proposition \ref{Insight_on_possible_composition_factors}, we then get that $V_G(\varpi)=\varpi/(\varpi_{j+1}+\delta_{j,n-1}\varpi_n)^{\epsilon_p(j+1)}/ \varpi_{j-1}^{\epsilon_p(2n-j+2)}.$ Here again, a suitable application of Proposition \ref{Extensions between two irreducibles} allows  us to conclude, thus completing the proof.
\end{proof}

The proof of Corollary \ref{Introduction: corollary on the dimension (B)} in the $B_n$-case simply consists in computing the dimension of $V_G(\varpi)$ and $\rad(\varpi),$ this for each $\varpi$ appearing in the first column of Table \ref{Table_B}. The details are thus left to the reader. We conclude with a proof of Proposition \ref{Restrictions_of_irreducibles_of_type_A} in the case where $G$ is of type $B_n.$

\begin{proof}[Proof of Proposition \textnormal{\ref{Restrictions_of_irreducibles_of_type_A}} (The $B_n$-case)]
We give a proof in the situation where $1\leqslant j\leqslant n-1,$ and omit  the other cases, as they can be  dealt with in a similar fashion. First observe that by Lemma \ref{Tensor_products_in_type_A}, we have
$$
\ch V|_G=\ch V_Y(\lambda)|_G-\epsilon_p(j+1)\ch L_Y(\lambda_{j+1})|_G.
$$
Also, an application of Proposition \ref{Type_B:restriction_in_characteristic_zero_0<j<n+1} yields $\ch V_Y(\lambda)|_G = \chi(\varpi)+\ch L_G(\varpi_{j-1}),$ while by \cite[Theorem 1, Table 1 $(\textnormal{I}_2,\textnormal{I}_3)$]{Se}, we have $\ch L_Y(\lambda_{j+1})|_G= \ch L_G(\varpi_{j+1}+\delta_{j,n-1}\varpi_n).$ Therefore
$$
\ch V|_G=\chi(\varpi)+ \ch L_G(\varpi_{j-1}) -\epsilon_p(j+1)\ch L_G(\varpi_{j+1}+\delta_{j,n-1}\varpi_n).
$$
Now $\chi(\varpi)= \ch L_G(\varpi)+\epsilon_p(j+1)\ch L_G(\varpi_{j+1}+ \delta_{j,n-1}\varpi_n) + \epsilon_p(2n-j+2)\ch L_G(\varpi_{j-1}) $ by  Theorem \ref{Main_Result_1}, and so the result follows.
\end{proof}

\section{The $D_n$-case $(n\geqslant 4)$}		

Let $K$ be an algebraically closed field of characteristic $p\neq 2,$ and let $Y$  a simply connected simple algebraic group of type $A_{2n-1}$ $(n\geqslant 3)$ over $K.$  Consider a subgroup $G$ of type $D_n,$ embedded in the usual way, as the stabilizer of a non-degenerate quadratic form on the natural module for $Y.$ Fix a Borel subgroup $B_Y = U_Y T_Y$ of $Y,$ where $T_Y$ is a maximal torus of $Y$ and $U_Y$ is the unipotent radical of $B_Y,$ let $\Pi(Y ) = \{\alpha_1,\ldots,\alpha_{2n-1}\}$ denote a corresponding base of the root system $\Phi(Y)=\Phi^+(Y)\sqcup \Phi^-(Y)$ of $Y,$ and let $\{\lambda_1,\ldots,\lambda_{2n-1}\}$ be the set of fundamental dominant weights for $T_Y$ corresponding to our choice of base $\Pi(Y).$ Also set $T=T_Y\cap G,$ $B=B_Y\cap G,$ so that $T$ is a maximal torus of $G,$ and $B$ is a Borel subgroup of $G$ containing $T.$ Let $\Pi(G)=\{\beta_1,\ldots,\beta_n\}$ be the corresponding base for the root system $\Phi(G)=\Phi^+(G)\sqcup \Phi^-(G)$ of $G,$ and let $\varpi_1,\ldots,\varpi_n$ denote the associated fundamental weights. The $A_{n-1}$-parabolic subgroup of $G$ corresponding to the simple roots $\{\beta_1,\ldots,\beta_{n-1}\}$ embeds in an $A_{n-1}\times A_{n-1}$-parabolic subgroup of $Y,$ and up to conjugacy, we may assume that this gives $\alpha_i|_{T}=\alpha_{2n-1-i}|_{T}=\beta_i$ for $1\leqslant i\leqslant n-1.$ By considering the action of the Levi factors of these parabolics on the natural $KY$-module $L_Y(\lambda_1),$ we can deduce that $\alpha_n|_{T}=\beta_n-\beta_{n-1}.$ Finally, using \cite[Table 1, p.69]{Humphreys1} and the fact that $\lambda_1|_{T}=\varpi_1$ yields
\begin{equation}
\lambda_i|_{T}=\lambda_{2n-i}|_{T}=\varpi_i, \mbox{ } \lambda_{n-1}|_{T}=\lambda_{n+1}|_{T}= \varpi_{n-1}+\varpi_n, \mbox{ } \lambda_n|_{T}=2\varpi_n \mbox{ for $1\leqslant i\leqslant n-2.$}
\label{[Dn<A2n-1] Weight restrictions for D}
\end{equation}

In this section, we show how to obtain the tables \ref{Table_D}, \ref{Table_of_dimensions_main_result_type_D},  and \ref{Restriction_of_V_Y(lambda)_to_G_of_type_D} in Theorem \ref{Main_Result_1}, Corollary \ref{Introduction: corollary on the dimension (B)}, and Proposition \ref{Restrictions_of_irreducibles_of_type_A}, respectively. We proceed as in Section \ref{The_Bn_case}, relying as much as possible on the embedding of $G$ in $Y$ described above.

\subsection{Restriction to $G$ of certain Weyl modules for $Y$}     

Let $V$ be a $KY$-module with unique  highest weight $\lambda\in X^+(T_Y).$ We start by proving a result similar to Lemma \ref{[B]restriction_of_highest_weight}.  Contrary to what we had in the latter result, the $T$-weight $\lambda|_T$ is not necessarily the unique highest weight of $V|_G.$ (For example, if $\langle \lambda,\alpha_n\rangle \neq 0,$ then the weight $\lambda-\alpha_n$ restricts to $\lambda|_T+\beta_{n-1}-\beta_n,$ which is neither under nor above $\lambda|_T.)$ Notice that according to the restrictions to $T$ of the simple roots for $T_Y$ stated above, we have that the restriction to $T$ of a given $T_Y$-weight $\mu=\lambda- \sum_{r=1}^{2n-1}{c_r \alpha_r}$ is given by
\begin{equation}
\mu|_T=\lambda|_T-\sum_{r=1}^{n-2}{(c_r+c_{2n-r})\beta_r -(c_{n-1}-c_n+c_{n+1})\beta_{n-1} - c_n\beta_n}.
\label{restriction_of_any_TY-weight_type_D}
\end{equation}

\begin{lem}\label{[D]restriction_of_highest_weight1}
Let $\lambda\in X^+(T_Y)$ be a dominant weight, and let $V$ be a $KY$-module with unique highest weight $\lambda.$ Then the following assertions hold.
\begin{enumerate}
\item \label{restriction_of_weights_1} If $\langle \lambda, \alpha_n\rangle =0,$ then every $T$-weight $\xi$ of $V$ satisfies $\xi \preccurlyeq \lambda|_T.$
\item \label{restriction_of_weights_2} If $\langle \lambda, \alpha_n\rangle =1,$ then each of $\lambda|_{T}$ and $(\lambda -\alpha_n)|_{T}$ affords the highest weight of a $KG$-composition factor of $V.$ Furthermore, every $T$-weight $\xi$ of $V$ either satisfies $\xi\preccurlyeq \lambda|_T$ or $\xi \preccurlyeq (\lambda-\alpha_n)|_T.$
\end{enumerate}
\end{lem}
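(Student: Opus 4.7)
The plan is to combine the explicit restriction formula \eqref{restriction_of_any_TY-weight_type_D} with an elementary $\mathfrak{sl}_2$-theoretic bound on the coefficient $c_n.$ Given a weight $\mu=\lambda-\sum_{r=1}^{2n-1}c_r\alpha_r$ of $V,$ the hypothesis that $\lambda$ is the unique highest weight forces $c_r\geqslant 0$ for every $r.$ Observing that $s_n\mu:=\mu-\langle\mu,\alpha_n\rangle\alpha_n$ is again a $T_Y$-weight of $V,$ it too lies below $\lambda$ in the $T_Y$-order, which yields the inequality $c_n+\langle \mu,\alpha_n\rangle \geqslant 0.$ Expanding $\langle\mu,\alpha_n\rangle = \langle \lambda,\alpha_n\rangle + c_{n-1}+c_{n+1}-2c_n$ via the $A_{2n-1}$-Cartan matrix then produces the key bound $c_n\leqslant c_{n-1}+c_{n+1}+\langle\lambda,\alpha_n\rangle.$

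For assertion (1), the hypothesis $\langle\lambda,\alpha_n\rangle=0$ immediately gives $c_{n-1}-c_n+c_{n+1}\geqslant 0,$ which is precisely the coefficient of $\beta_{n-1}$ appearing in $\lambda|_T-\mu|_T$ according to \eqref{restriction_of_any_TY-weight_type_D}. The remaining coefficients $c_r+c_{2n-r}$ $(1\leqslant r\leqslant n-2)$ and $c_n$ are manifestly nonnegative, so $\mu|_T\preccurlyeq\lambda|_T.$ For assertion (2) the same bound becomes $c_n\leqslant c_{n-1}+c_{n+1}+1$: if $c_n\leqslant c_{n-1}+c_{n+1}$ the previous argument still yields $\mu|_T \preccurlyeq \lambda|_T,$ while if $c_n=c_{n-1}+c_{n+1}+1$ a direct computation using $\alpha_n|_T=\beta_n-\beta_{n-1}$ shows that
$$
(\lambda-\alpha_n)|_T-\mu|_T=\sum_{r=1}^{n-2}(c_r+c_{2n-r})\beta_r + (c_{n-1}-c_n+c_{n+1}+1)\beta_{n-1} + (c_n-1)\beta_n
$$
is a nonnegative $\mathbb{Z}$-linear combination of the $\beta_r,$ hence $\mu|_T\preccurlyeq(\lambda-\alpha_n)|_T.$

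To finish assertion (2) I will check that both $\lambda|_T$ and $(\lambda-\alpha_n)|_T$ are dominant $T$-weights of $V|_G$: the former is immediate from \eqref{[Dn<A2n-1] Weight restrictions for D}, while for the latter the hypothesis $\langle\lambda,\alpha_n\rangle=1$ is precisely what ensures the coefficient of $\varpi_n$ remains nonnegative after subtracting $\alpha_n|_T=\beta_n-\beta_{n-1}.$ Since $(\lambda-\alpha_n)|_T-\lambda|_T=\beta_{n-1}-\beta_n$ belongs neither to $\Gamma$ nor to $-\Gamma,$ the two weights are incomparable in $\preccurlyeq,$ and by the previous paragraph each is maximal in the $T$-order among weights of $V|_G$; thus each must afford the highest weight of some $KG$-composition factor. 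The main obstacle is the case distinction in assertion (2), which is unavoidable because $\alpha_n|_T$ is not itself in $\Gamma$; this is precisely what distinguishes the $D_n$-case from its $B_n$-analogue, Lemma \ref{[B]restriction_of_highest_weight}, and forces the appearance of a second maximal restriction.
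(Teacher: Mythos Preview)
Your proof is correct and follows essentially the same approach as the paper: both arguments hinge on the observation that $s_{\alpha_n}\mu$ is again a $T_Y$-weight below $\lambda$, which forces the bound $c_n\leqslant c_{n-1}+c_{n+1}+\langle\lambda,\alpha_n\rangle$ and hence controls the sign of the $\beta_{n-1}$-coefficient in \eqref{restriction_of_any_TY-weight_type_D}. The only differences are organizational---you argue directly and establish the ``furthermore'' clause of (2) before deducing maximality of $(\lambda-\alpha_n)|_T$ from incomparability, whereas the paper argues by contradiction and verifies maximality first---but the mathematical content is the same.
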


\begin{proof}
Write $\varpi=\lambda|_{T}$ and $\varpi'=(\lambda-\alpha_n)|_T.$ First suppose that $\langle \lambda,\alpha_n\rangle=0$ and, seeking a contradiction, assume the existence of non-negative integers $c_1,\ldots,c_{2n-1}\in \mathbb{Z}_{\geqslant 0}$ such that $\mu=\lambda- \sum_{r=1}^{2n-1}{c_r \alpha_r}$  satisfies $\xi=\mu |_{T}\not\preccurlyeq \varpi.$ By \eqref{restriction_of_any_TY-weight_type_D}, we get that $c_n>c_{n-1}+c_{n+1}.$ In particular, we get that $\langle \mu, \alpha_n\rangle < -c_n,$ from which one deduces that $s_{\alpha_n}(\mu)$ is a $T_Y$-weight of $V$ that is not under $\lambda,$ a contradiction. Therefore \ref{restriction_of_weights_1} holds as desired, and we assume $\langle \lambda, \alpha_n\rangle =1$ in the remainder of the proof. 

We first show that $\varpi$ and $\varpi'$ are  highest weights of $V|_G,$ and hence each of them affords the highest weight of a $KG$-composition factor of $V.$ Let $\mu=\lambda-\sum_{r=1}^{2n-1}{c_r\alpha_r}$ be a $T_Y$-weight of $V$   such that $\varpi'\preccurlyeq \mu|_{T}.$ By \eqref{restriction_of_any_TY-weight_type_D}, we get $c_r=0$ for every $1\leqslant r\leqslant 2n-1$ different from $n-1,n,n+1,$ as well as $c_n=1,$ and hence $c_{n-1}+c_{n+1}=0.$ Therefore $ c_{n-1}=c_{n+1}=0$ and thus $\mu|_{T}=\varpi'$ as desired. In order to prove the last assertion, assume for a contradiction the existence of a $T_Y$-weight $\mu=\lambda-\sum_{r=1}^{2n-1}{c_r\alpha_r}$ of $V$ such that neither $\mu|_{T}\preccurlyeq \varpi$ nor $\mu|_{T}\preccurlyeq \varpi'.$ By \eqref{restriction_of_any_TY-weight_type_D} again, we have $c_{n-1}-c_n+c_{n+1}<-1.$ In particular $\langle \mu , \alpha_n \rangle <-c_n,$ showing that $s_{\alpha_n}(\mu)$ is a $T_Y$-weight of $V$ that is not under $\lambda,$ a contradiction.
\end{proof}

\begin{corol}\label{Corollary_on_V(lambda_n)}
Let $\lambda=\lambda_n\in X^+(T_Y),$ and consider the Weyl module $V_Y(\lambda)$ having highest weight $\lambda.$ Then the formal character of the restriction $V_Y(\lambda)$ to $G$ is given by
$$
\ch V_G(\lambda)|_G = \chi(2\varpi_{n-1}) + \chi(2\varpi_n).
$$ 
\end{corol}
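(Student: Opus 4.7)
The plan is to reduce to characteristic zero via character-independence and then finish by a dimension count. Since $V_Y(\lambda)$ is a Weyl module, its formal character, and hence that of its restriction $V_Y(\lambda)|_G$, is independent of $\characteristic K$. I may therefore assume that $K$ has characteristic zero throughout, in which case $V_Y(\lambda)|_G$ is completely reducible.

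The first task is to identify the $KG$-highest weights that occur. Since $\langle \lambda_n,\alpha_n\rangle =1$, Lemma \ref{[D]restriction_of_highest_weight1}\ref{restriction_of_weights_2} applies: both $\lambda_n|_T$ and $(\lambda_n-\alpha_n)|_T$ afford the highest weight of a $KG$-composition factor of $V_Y(\lambda_n)$, and every $T$-weight of the restriction is dominated by one of these two. From \eqref{[Dn<A2n-1] Weight restrictions for D} we have $\lambda_n|_T = 2\varpi_n$. Using $\alpha_n|_T = \beta_n-\beta_{n-1}$ together with the standard expressions $\beta_{n-1}=\varepsilon_{n-1}-\varepsilon_n$, $\beta_n=\varepsilon_{n-1}+\varepsilon_n$, and $2\varpi_n=\varepsilon_1+\cdots+\varepsilon_n$, a direct computation gives
$$
(\lambda_n-\alpha_n)|_T = 2\varpi_n -\beta_n+\beta_{n-1} = \varepsilon_1+\cdots+\varepsilon_{n-1}-\varepsilon_n = 2\varpi_{n-1}.
$$
Thus $V_G(2\varpi_n)$ and $V_G(2\varpi_{n-1})$ each appear at least once as a composition factor, and no other weight can afford the highest weight of a composition factor.

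The second task is a dimension comparison. By Lemma \ref{Dimension_of_various_irreducibles_in_type_A}\ref{Various_dimensions_in_type_A_2} (or Weyl's formula applied to $V_Y(\lambda_n)\cong \Lambda^n L_Y(\lambda_1)$) one has $\dim V_Y(\lambda_n)=\binom{2n}{n}$, while Lemma \ref{Dimension_of_various_irreducibles_in_type_D} yields $\dim V_G(2\varpi_{n-1})=\tfrac{1}{2}\binom{2n}{n}$; applying the graph automorphism of $G$ that swaps the simple roots $\beta_{n-1}$ and $\beta_n$ (and hence the fundamental weights $\varpi_{n-1}$ and $\varpi_n$) gives $\dim V_G(2\varpi_n)=\tfrac{1}{2}\binom{2n}{n}$ as well. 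Therefore
$$
\dim V_Y(\lambda_n) = \dim V_G(2\varpi_n) + \dim V_G(2\varpi_{n-1}),
$$
so in characteristic zero $V_Y(\lambda_n)|_G\cong V_G(2\varpi_n)\oplus V_G(2\varpi_{n-1})$, and the formula follows upon invoking $\chi(\mu)=\ch V_G(\mu)$ for $\mu\in X^+(T)$.

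The main obstacle is less the dimension count than the precise identification $(\lambda_n-\alpha_n)|_T = 2\varpi_{n-1}$: this is what pins down the second composition factor and rules out any weight strictly between $2\varpi_{n-1}$ and $2\varpi_n$ (in the restricted order) from carrying a hidden composition factor. Once this restriction is computed and the two dimensions are known to sum correctly, there is no room for additional factors and the character identity is forced.
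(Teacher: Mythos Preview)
Your proof is correct and follows essentially the same approach as the paper: reduce to characteristic zero, invoke Lemma~\ref{[D]restriction_of_highest_weight1}\ref{restriction_of_weights_2} to exhibit $2\varpi_n$ and $2\varpi_{n-1}$ as highest weights of composition factors, then finish with the dimension count $\dim V_Y(\lambda_n)=\binom{2n}{n}=\dim V_G(2\varpi_{n-1})+\dim V_G(2\varpi_n)$ from Tables~\ref{Various_dimensions_in_type_A} and~\ref{Various_dimensions_in_type_D}. You spell out the identification $(\lambda_n-\alpha_n)|_T=2\varpi_{n-1}$ and the use of the graph automorphism explicitly, whereas the paper leaves these implicit, but the argument is the same.
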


\begin{proof}
As usual, we assume $K$ has characteristic zero. Now by part \ref{restriction_of_weights_2} of Lemma \ref{[D]restriction_of_highest_weight1}, each of $\varpi=2\varpi_n$ and $\varpi'=2\varpi_{n-1}$ affords the highest weight of a composition factor of $V_Y(\lambda)|_G.$  An application of Tables \ref{Various_dimensions_in_type_A} and \ref{Various_dimensions_in_type_D} then yields $\dim V_Y(\lambda) = \dim V_G(2\varpi_{n-1})+\dim V_G(2\varpi_n),$ thus completing the proof.
\end{proof}

We next investigate the formal character of the restriction to $G$ of the Weyl module $V_Y(\lambda_1+\lambda_j),$ where $1\leqslant j \leqslant 2n-1.$

\begin{prop}\label{[D]character_of_V_Y(lambda)_G_for_D}
Let $1\leqslant j\leqslant 2n-1$ and  consider the dominant $T_Y$-weight $\lambda=\lambda_1+\lambda_j\in X^+(T_Y).$ Also set $\varpi=\lambda|_T$  and adopt the notation $\varpi_0=0.$ Then  
$$
\ch V_Y(\lambda)|_G  =
\begin{cases}
\chi(\varpi) + \chi(\varpi_{j-1})	& \mbox{ if $1\leqslant j\leqslant n-1;$}\cr
\chi(\varpi)+\chi(\varpi_1+2\varpi_{n-1})+ \chi(\varpi_{n-1}+\varpi_n) & \mbox{ if $j=n;$}\cr 
\chi(\varpi) + \chi(2\varpi_{n-1}) + \chi(2\varpi_n) 													& \mbox{ if $j=n+1;$} 				\cr
\chi(\varpi) + \chi(\varpi_{n-1}+\varpi_n)										& \mbox{ if $j=n+2;$}  						\cr
\chi(\varpi) + \chi(\varpi_{2n-j+1})										& \mbox{ otherwise.}
\end{cases}
$$
\end{prop}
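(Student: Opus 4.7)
First I would reduce the problem to characteristic zero. Since $V_Y(\lambda)$ is a Weyl module, its formal character is given by Theorem \ref{Weyl's formula}, and hence $\ch V_Y(\lambda)|_G$ is independent of $p.$ In characteristic zero the category of finite-dimensional rational $KG$-modules is semisimple, so it suffices to produce an explicit decomposition $V_Y(\lambda)|_G \cong \bigoplus V_G(\mu_i)$ and read off the character as $\sum \chi(\mu_i).$

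For the ``generic'' ranges $1\leqslant j\leqslant n-1$ and $n+3\leqslant j\leqslant 2n-1$ I would invoke the classical branching rule from $A_{2n-1}$ to $D_n$ via \cite[Proposition 1.5.3]{Koike} (the analogue of the citation used in the proof of Proposition \ref{Type_B:restriction_in_characteristic_zero_0<j<n+1}); the symmetry $\lambda_i|_T = \lambda_{2n-i}|_T$ from \eqref{[Dn<A2n-1] Weight restrictions for D} reduces the latter range to the former. The middle cases $j = n, n+1, n+2$ are the substantive ones, and I would treat them by a uniform tensor-product argument. In characteristic zero Lemma \ref{Tensor_products_in_type_A} gives
$$
V_Y(\lambda_1) \otimes V_Y(\lambda_j) \cong V_Y(\lambda_1+\lambda_j) \oplus V_Y(\lambda_{j+1}),
$$
so
$$
\ch V_Y(\lambda_1+\lambda_j)|_G = \ch V_G(\varpi_1)\cdot \ch V_Y(\lambda_j)|_G - \ch V_Y(\lambda_{j+1})|_G.
$$
The ingredients on the right are known: by \cite[Theorem 1, Table 1 ($\textnormal{I}_4,\textnormal{I}_5$)]{Se} we have $V_Y(\lambda_{n-1})|_G \cong V_Y(\lambda_{n+1})|_G \cong V_G(\varpi_{n-1}+\varpi_n)$ and $V_Y(\lambda_{n\pm 2})|_G \cong V_G(\varpi_{n-2})$, while $V_Y(\lambda_n)|_G \cong V_G(2\varpi_{n-1}) \oplus V_G(2\varpi_n)$ by Corollary \ref{Corollary_on_V(lambda_n)}.

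The remaining work is then to decompose, in characteristic zero, each of the tensor products $V_G(\varpi_1) \otimes V_G(\mu)$ for $\mu \in \{\varpi_{n-2}, \varpi_{n-1}+\varpi_n, 2\varpi_{n-1}, 2\varpi_n\}.$ These are standard Pieri (or Brauer--Klimyk) computations with the natural $KG$-module; carrying them out and substituting into the identity above produces each of the claimed formulas after cancellation. The delicate step will be the $j = n$ case, where Lemma \ref{[D]restriction_of_highest_weight1}\ref{restriction_of_weights_2} applies (since $\langle \lambda_1+\lambda_n,\alpha_n\rangle = 1$) and already forces two distinct highest $T$-weights $\varpi = \varpi_1+2\varpi_n$ and $\varpi_1+2\varpi_{n-1}$ in $V_Y(\lambda)|_G$; the tensor-product calculation must then be used to verify that exactly one copy of $V_G(\varpi_{n-1}+\varpi_n)$ survives as an additional composition factor, the two copies contributed by $V_G(\varpi_1)\otimes (V_G(2\varpi_{n-1})\oplus V_G(2\varpi_n))$ being reduced by the subtraction of $V_Y(\lambda_{n+1})|_G \cong V_G(\varpi_{n-1}+\varpi_n).$
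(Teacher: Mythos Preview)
Your tensor-product strategy for the middle cases $j=n,n+1,n+2$ is valid and genuinely different from the paper's argument. The paper does \emph{not} compute the products $V_G(\varpi_1)\otimes V_G(\mu)$ first; rather it works directly on $V_Y(\lambda)|_G$: it invokes Lemma~\ref{[D]restriction_of_highest_weight1} to locate the highest $T$-weights, explicitly lists the $T_Y$-weights restricting to each candidate subdominant $T$-weight, compares the resulting multiplicity against $\m_{V_G(\varpi)}$ to detect an extra composition factor, and finishes with a dimension count via Theorem~\ref{Weyl's formula}. In fact the logical order in the paper is the reverse of yours: Proposition~\ref{[D]character_of_V_Y(lambda)_G_for_D} is proved first, and Lemmas~\ref{chT(omega)_for_D}--\ref{Another_other_chT(omega)_for_D} (the tensor decompositions you would need) are deduced \emph{from} it. Your route, computing those tensor products independently by Brauer--Klimyk and then subtracting, works equally well and is arguably cleaner, since it avoids the ad hoc multiplicity bookkeeping; the paper's route has the advantage of not relying on any external Pieri-type input for $D_n$.

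One point does need repair. The claim that the symmetry $\lambda_i|_T=\lambda_{2n-i}|_T$ ``reduces'' the range $n+3\leqslant j\leqslant 2n-1$ to $1\leqslant j\leqslant n-1$ is not correct as stated: that equality concerns the restriction of a single fundamental weight, but $V_Y(\lambda_1+\lambda_j)$ and $V_Y(\lambda_1+\lambda_{2n-j})$ are non-isomorphic $Y$-modules whose restrictions to $G$ are visibly different (compare the two ``otherwise'' lines of the statement: one contributes $\chi(\varpi_{j-1})$, the other $\chi(\varpi_{2n-j+1})$). Nor does Koike's branching rule, in the form the paper cites, apply directly once the indexing partition $(2,1^{j-1})$ has more than $n$ parts. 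The paper handles $j>n+1$ by the same multiplicity-counting method as for $j=n+1$. Your own tensor-product method, however, extends uniformly to this range (using $V_Y(\lambda_j)|_G\cong V_G(\varpi_{2n-j})$ from \cite{Se} and the $D_n$-Pieri decomposition of $V_G(\varpi_1)\otimes V_G(\varpi_{2n-j})$), so you can simply drop the symmetry remark and treat all $j\geqslant n$ the same way.
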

\begin{proof}
Write $V=V_Y(\lambda)$ and first observe that $\ch V|_G$ is independent of $p,$ so we may and shall assume $K$ has characteristic zero in the remainder of the proof.  In the case where $1\leqslant j\leqslant n-1,$ an application of \cite[Proposition 1.5.3]{Koike} yields $V_Y(\lambda)|_G\cong V_G(\varpi)\oplus V_G(\varpi_{j-1}),$ from which the result follows. 
\vspace{5mm}

Next assume $j=n,$ and write $\varpi'=\varpi_1+2\varpi_{n-1},$ $\varpi''=2\varpi_{n-1}+\varpi_n.$ By Part \ref{restriction_of_weights_2} of Lemma \ref{[D]restriction_of_highest_weight1}, each of $\varpi$ and $\varpi'$ affords the highest weight of a composition factor of $V|_G.$ Observe that the only $T_Y$-weights restricting to $\varpi''=\varpi_{n-1}+\varpi_n$ are $\lambda-(\alpha_1+\cdots+\alpha_r+\alpha_n+\cdots+\alpha_{2n-r-1})$ (for $1\leqslant r \leqslant n-1$) and $\lambda-(\alpha_n+\cdots+\alpha_{2n-1}).$ An application of Lemma \ref{Tensor_products_in_type_A} then yields $\m_{V|_G}(\varpi'') = 2n-1$ as well as $\m_{V_G(\varpi)}(\varpi'')=\m_{V_G(\varpi')}(\varpi'')=n-1,$ thus showing that $\varpi''$ occurs in a third $KG$-composition factor of $V.$  Now one easily checks that every $T$-weight $\nu\in \Lambda^+(V|_G)$ such that $\varpi''\prec\nu \prec \varpi$ or $\varpi''\prec \nu \prec \varpi'$ satisfies $
\m_{V|_G}(\nu)=\m_{V_G(\varpi)}(\nu)+\m_{V_G(\varpi')}(\nu),$
showing that $\varpi''$ affords the highest weight of a third $KG$-composition factor of $V$ by Part \ref{restriction_of_weights_2} of Lemma \ref{[D]restriction_of_highest_weight1}. As in the proof of Proposition \ref{Type_B:restriction_in_characteristic_zero_0<j<n+1}, an application of Theorem \ref{Weyl's formula} then yields the desired result.
\vspace{5mm}

Next suppose that $j=n+1$ and consider the dominant $T$-weight $\varpi'=\varpi-(\beta_1+\cdots+\beta_{n-1})\in X^+(T).$ Then the $T_Y$-weights restricting to $\varpi'$ are $\lambda-(\alpha_1+\cdots+\alpha_{n-1}),$ $\lambda-(\alpha_1+\cdots+\alpha_r+\alpha_{n+1}+\cdots+\alpha_{2n-r-1})$ $(1\leqslant r<n-1),$ and $\lambda-(\alpha_{n+1}+\cdots+\alpha_{2n-1}).$ Therefore $\m_{V|_G}(\varpi')=n,$ while $\m_{V_G(\varpi)}(\varpi')=n-1$ by Lemma \ref{Tensor_products_in_type_A}, showing that $\varpi'$ occurs in a second $KG$-composition factor of $V.$ Since there is no dominant weight $\nu \in X^+(\varpi)$ such that $\varpi'\prec \nu \prec \varpi,$ an application of Lemma \ref{[D]restriction_of_highest_weight1} yields $[V|_G,L_G(\varpi')]=1$ as desired. We leave to the reader to show that $[V|_G,L_G(2\varpi_{n-1})]=1$ as well, from which one easily concludes thanks to Theorem \ref{Weyl's formula}, for example.
\vspace{5mm}

Next consider $n+1<j \leqslant 2n-1$ and let  $\varpi'=\varpi-(\beta_1+\cdots+\beta_{2n-j}) \in X^+(T).$ Then one easily checks that the $T_Y$-weights restricting to $\varpi'$  are $\lambda-(\alpha_j+\cdots+\alpha_{2n-1}),$ $\lambda-(\alpha_1+\cdots+\alpha_r + \alpha_j+\cdots+\alpha_{2n-r-1})$ $(1\leqslant r \leqslant 2n-j-1),$ and $\lambda-(\alpha_1+\cdots+\alpha_{2n-j}).$ Therefore $\m_{V|_G}(\varpi')=2n-j+1,$ while an application of Lemma \ref{Tensor_products_in_type_A} yields $\m_{V_G(\varpi)}(\varpi')=2n-j,$ showing that $\varpi'$ occurs in a second $KG$-composition factor of $V.$ As above, there is no dominant weight $\nu \in X^+(\varpi)$ such that $\varpi'\prec \nu \prec \varpi$ and thus $[V|_G,L_G(\varpi')]=1$ by Lemma \ref{[D]restriction_of_highest_weight1}. Again, applying Theorem \ref{Weyl's formula} completes the proof.
\end{proof}

\subsection{Formal character of certain tensor products}     

We next determine the formal character of the tensor product $V_G(\varpi_1)\otimes V_G(\varpi_j+\delta_{j,n}\varpi_n)$ for $1\leqslant j\leqslant n,$ as well as of the formal character of the tensor product $V_G(\varpi_1)\otimes V_G(\varpi_{n-1}+\varpi_n).$ Observe that since $p\neq 2,$ each of the Weyl modules considered is irreducible, and hence is tilting.

\begin{lem}\label{chT(omega)_for_D}
Let $1\leqslant j\leqslant n-1,$ and consider the dominant $T$-weight $\varpi=\varpi_1+\varpi_j + \delta_{j,n-1}\varpi_n\in X^+(T).$ Also set $\varpi_0=\varpi_{n+1}=0$ and write $T(w)$ for the tensor product $V_G(\varpi_1)\otimes V_G(\varpi_j+\delta_{j,n-1}\varpi_n).$ Then $T(\varpi)$ is tilting and its formal character is given by 
$$
\ch T(\varpi)=\chi(\varpi) + \chi(\varpi_{j-1}) +  (1-\delta_{j,n-1})\chi(\varpi_{j+1}+\delta_{j,n-2}\varpi_n) + \delta_{j,n-1}(\chi(2\varpi_{n-1})+\chi(2\varpi_n)).
$$
\end{lem}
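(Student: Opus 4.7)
The plan is to mimic the argument used to establish Lemma \ref{chT(omega)_for_B}, exploiting the embedding $G\hookrightarrow Y$ with $Y$ of type $A_{2n-1}$. First, since $p\neq 2$, Lemma \ref{Dimension_of_various_irreducibles_in_type_D} gives that both factors $V_G(\varpi_1)$ and $V_G(\varpi_j+\delta_{j,n-1}\varpi_n)$ are irreducible Weyl modules, hence tilting; Proposition \ref{Tensor product of modules with good filtrations} then shows that $T(\varpi)$ is tilting. Because $\ch T(\varpi)$ is independent of $p$, I would reduce to the characteristic zero setting for the character computation.

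Next I would identify the tensor factors as restrictions from $Y$: by \cite[Theorem 1, Table 1 (I$_4$, I$_5$)]{Se} we have $V_G(\varpi_1)\cong V_Y(\lambda_1)|_G$ and $V_G(\varpi_j+\delta_{j,n-1}\varpi_n)\cong V_Y(\lambda_j)|_G$, so
\[
T(\varpi)\cong \bigl(V_Y(\lambda_1)\otimes V_Y(\lambda_j)\bigr)\big|_G.
\]
Lemma \ref{Tensor_products_in_type_A} (in characteristic zero) gives the decomposition $V_Y(\lambda_1)\otimes V_Y(\lambda_j)\cong V_Y(\lambda)\oplus V_Y(\lambda_{j+1})$, where $\lambda=\lambda_1+\lambda_j$, and it remains to compute the character of each summand restricted to $G$.

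The summand $V_Y(\lambda)|_G$ is handled by Proposition \ref{[D]character_of_V_Y(lambda)_G_for_D}: since $1\leqslant j\leqslant n-1$, the first case of that proposition applies and yields $\ch V_Y(\lambda)|_G=\chi(\varpi)+\chi(\varpi_{j-1})$. For $V_Y(\lambda_{j+1})|_G$ I would split into two cases. When $1\leqslant j\leqslant n-2$, the index $j+1$ satisfies $j+1\leqslant n-1$, and again by \cite[Theorem 1, Table 1 (I$_4$, I$_5$)]{Se} we have $V_Y(\lambda_{j+1})|_G\cong V_G(\varpi_{j+1}+\delta_{j+1,n-1}\varpi_n)=V_G(\varpi_{j+1}+\delta_{j,n-2}\varpi_n)$, so the additional contribution is $\chi(\varpi_{j+1}+\delta_{j,n-2}\varpi_n)$, agreeing with the $(1-\delta_{j,n-1})$-term in the stated formula.

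The remaining case $j=n-1$ is the only one requiring a different argument, and I expect this to be the main (mild) obstacle. Here $\lambda_{j+1}=\lambda_n$ is the ``half-spin'' fundamental weight for $Y$, so one cannot simply invoke (I$_4$, I$_5$) of \cite{Se}. Instead I would appeal to Corollary \ref{Corollary_on_V(lambda_n)}, which gives $\ch V_Y(\lambda_n)|_G=\chi(2\varpi_{n-1})+\chi(2\varpi_n)$; this exactly produces the $\delta_{j,n-1}(\chi(2\varpi_{n-1})+\chi(2\varpi_n))$ term in the stated formula, while $(1-\delta_{j,n-1})=0$ in this case. Summing the contributions from both summands in each of the two cases yields the claimed equality for $\ch T(\varpi)$, completing the proof.
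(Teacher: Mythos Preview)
Your proposal is correct and follows essentially the same approach as the paper's own proof: establishing tilting via Lemma \ref{Dimension_of_various_irreducibles_in_type_D} and Proposition \ref{Tensor product of modules with good filtrations}, reducing to characteristic zero, identifying $T(\varpi)$ with $(V_Y(\lambda_1)\otimes V_Y(\lambda_j))|_G$ via \cite{Se}, decomposing via Lemma \ref{Tensor_products_in_type_A}, and then handling $V_Y(\lambda)|_G$ through Proposition \ref{[D]character_of_V_Y(lambda)_G_for_D} and $V_Y(\lambda_{j+1})|_G$ either through \cite{Se} (for $j\leqslant n-2$) or Corollary \ref{Corollary_on_V(lambda_n)} (for $j=n-1$).
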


\begin{proof}
By Lemma \ref{Dimension_of_various_irreducibles_in_type_D}, both $V_G(\varpi_1)$ and $V_G(\varpi_j+\delta_{j,n-1}\varpi_n)$ are irreducible $KG$-modules, and hence $T(\varpi)$ is tilting by Proposition \ref{Tensor product of modules with good filtrations}. Also $\ch T(\varpi)$ is independent of $p,$ so we may and shall assume $K$ has characteristic zero in the remainder of the proof. By \cite[Theorem 1, Table 1 $(\textnormal{I}_4,\textnormal{I}_5)$]{Se} together with Lemma \ref{Tensor_products_in_type_A}, we successively get 
\begin{equation*}
T(\varpi)	\cong	\left(V_Y(\lambda_1)\otimes V_Y(\lambda_j)\right)|_G	\cong 	 V_Y(\lambda)|_G \oplus V_Y(\lambda_{j+1})|_G.
\end{equation*}

Now if $1\leqslant j \leqslant n-2,$ then applying \eqref{[Dn<A2n-1] Weight restrictions for B} yields the restrictions $\lambda|_{T}=\varpi$ and $\lambda_{j+1}|_{T} = \varpi_{j+1}+\delta_{j,n-2}\varpi_n.$ Therefore Proposition \ref{[D]character_of_V_Y(lambda)_G_for_D} yields $V_Y(\lambda)|_G\cong V_G(\varpi) \oplus V_G(\varpi_{j-1}),$ while $V_Y(\lambda_{j+1})|_G \cong  V_G(\varpi_{j+1}+\delta_{j,n-2}\varpi_n)$ by \cite[Theorem 1, Table 1 $(\textnormal{I}_4,\textnormal{I}_5)$]{Se}. The assertion thus holds in this situation and so it remains to consider the case where $j=n-1.$ Here we have $\lambda|_T=\varpi,$ $\lambda_n|_T=2\varpi_n,$ and applying Corollary \ref{Corollary_on_V(lambda_n)} and Proposition \ref{[D]character_of_V_Y(lambda)_G_for_D} yields $
T(\varpi) \cong V_G(\varpi) \oplus V_G(\varpi_{n-1}+\varpi_n) \oplus V_G(2\varpi_{n-1}) \oplus V_G(2\varpi_n)$ as desired. The proof is complete.
\end{proof}

\begin{lem}\label{Another_chT(omega)_for_D}
Consider the dominant $T$-weight $\varpi=\varpi_1+\varpi_{n-1} \in X^+(T),$ and write $T(w)$ for the tensor product $V_G(\varpi_1)\otimes V_G( \varpi_{n-1}).$ Then $T(\varpi)$ is tilting and its formal character is given by 
$$
\ch T(\varpi)=\chi(\varpi) + \chi(\varpi_n).
$$
\end{lem}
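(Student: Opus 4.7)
The plan is to mirror the proof of Lemma \ref{Another_chT(omega)_for_B}: establish that $T(\varpi)$ is tilting, reduce to characteristic zero, and then compute its Weyl-character decomposition. For the first step, since $p\neq 2$, both $V_G(\varpi_1)$ and $V_G(\varpi_{n-1})$ are irreducible by Lemma \ref{Dimension_of_various_irreducibles_in_type_D}, hence tilting, and Proposition \ref{Tensor product of modules with good filtrations} then yields that $T(\varpi)$ is tilting; moreover, $\ch T(\varpi)$ is a product of Weyl characters and thus independent of $p$, so I may assume $K$ has characteristic zero throughout.

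The main difficulty, and the essential departure from the $B_n$-case treated in Lemma \ref{Another_chT(omega)_for_B}, is that the half-spin module $V_G(\varpi_{n-1})$ is \emph{not} the restriction to $G$ of any fundamental $KY$-module: by the restrictions in \eqref{[Dn<A2n-1] Weight restrictions for D} we have $\lambda_{n-1}|_T=\varpi_{n-1}+\varpi_n$ and $\lambda_n|_T=2\varpi_n$, so none of the $V_Y(\lambda_j)$ restricts to $V_G(\varpi_{n-1})$. Consequently I cannot rewrite $T(\varpi)$ as the restriction of a tensor product of $KY$-modules and simply invoke Lemma \ref{Tensor_products_in_type_A} as was done in the proof of Lemma \ref{Another_chT(omega)_for_B}.

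Instead I would argue directly. Since $\varpi_1$ is minuscule for $D_n$---its $\mathscr{W}$-orbit consists of the $2n$ weights $\pm\varepsilon_i$, each of multiplicity one in $V_G(\varpi_1)$---the standard formula for multiplication by a minuscule Weyl character yields
$$
\chi(\varpi_1)\,\chi(\varpi_{n-1}) = \sum_{i=1}^{n}\bigl(\chi(\varpi_{n-1}+\varepsilon_i)+\chi(\varpi_{n-1}-\varepsilon_i)\bigr),
$$
where $\chi$ is extended to all of $X(T)$ via Lemma \ref{[Preliminaries] The Jantzen p-sum formula: Tools to compute chi(lambda)}. Using the explicit $\varepsilon$-coordinates $\rho+\varpi_{n-1}=(n-\tfrac12,n-\tfrac32,\ldots,\tfrac32,-\tfrac12)$, a short case analysis on each of the $2n$ summands shows that $\rho+\varpi_{n-1}\pm\varepsilon_i$ has two coordinates of equal absolute value---so the corresponding $\chi$ vanishes by Lemma \ref{[Preliminaries] The Jantzen p-sum formula: Tools to compute chi(lambda)}---in every case except $+\varepsilon_1$ and $+\varepsilon_n$, which yield $\rho+\varpi$ and $\rho+\varpi_n$ respectively. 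This gives $\ch T(\varpi)=\chi(\varpi)+\chi(\varpi_n)$ as claimed.
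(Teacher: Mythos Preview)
Your proof is correct but takes a different route from the paper's. The paper does \emph{not} attempt to realize $T(\varpi)$ as a restriction from $Y$; instead it argues by dimension, exactly as in the $j=n$ case of Lemma~\ref{chT(omega)_for_B}: using Theorem~\ref{Weyl's formula} and Table~\ref{Various_dimensions_in_type_D} one computes $\dim T(\varpi)=\dim V_G(\varpi)+2^{n-1}$, observes that $\Lambda^+(T(\varpi))=\{\varpi,\varpi_n\}$ with $\m_{T(\varpi)}(\varpi)=1$, and concludes since $\dim V_G(\varpi_n)=2^{n-1}$. Your minuscule/Brauer computation is a genuine alternative: it avoids computing $\dim V_G(\varpi)$ via Weyl's formula and gives the character decomposition directly, at the cost of a small case analysis on the $2n$ weights $\pm\varepsilon_i$. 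Both approaches are short; the paper's fits more seamlessly with the surrounding arguments, while yours is more self-contained. Your paragraph explaining why the restriction-from-$Y$ method of Lemma~\ref{Another_chT(omega)_for_B} fails here is accurate, but somewhat moot given that the paper never tries it.
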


\begin{proof}
By Lemma \ref{Dimension_of_various_irreducibles_in_type_D}, both $V_G(\varpi_1)$ and $V_G(\varpi_{n-1})$ are irreducible, thus showing that $T(\varpi)$ is tilted by Proposition \ref{Tensor product of modules with good filtrations}. Also $\ch T(\varpi)$ is independent of $p,$ so we may and shall assume $K$ has characteristic zero in the remainder of the proof. Applying Theorem \ref{Weyl's formula} and Table \ref{Various_dimensions_in_type_D} then yields 
$$
\dim T(\varpi)=\dim V_G(\varpi)+2^{n-1} > \dim V_G(\varpi),
$$
showing the existence of a second composition factor of $T(\varpi).$ Now $\Lambda^+(T(\varpi))=\{\varpi,\varpi_n\},$ and since $\m_{T(\varpi)}(\varpi)=\m_{V_G(\varpi)}(\varpi)=1,$ we deduce that $[T(\varpi),V_G(\varpi_n)]>0.$ Finally, an application of Table \ref{Various_dimensions_in_type_D} yields $\dim V_G(\varpi_n)=2^{n-1},$  thus completing the proof.
\end{proof}

\begin{lem}\label{Another_other_chT(omega)_for_D}
Consider the dominant $T$-weight $\varpi=\varpi_1+2\varpi_n \in X^+(T),$ and write $T(w)$ for the tensor product $V_G(\varpi_1)\otimes V_G(2\varpi_n).$ Then $T(\varpi)$ is tilting and its formal character is given by 
$$
\ch T(\varpi)= \chi(\varpi)+\chi(\varpi_{n-1}+\varpi_n).
$$
\end{lem}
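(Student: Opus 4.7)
The plan is to first establish that $T(\varpi)$ is tilting, reduce to characteristic zero, compute $\ch T(\varpi)$ via the embedding $G \hookrightarrow Y$ of type $A_{2n-1},$ and conclude using the nontrivial diagram automorphism of $D_n.$

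First, both $V_G(\varpi_1)$ and $V_G(2\varpi_n)$ are irreducible---the former by Lemma \ref{Dimension_of_various_irreducibles_in_type_D}, the latter by the same lemma applied to $V_G(2\varpi_{n-1})$ combined with the diagram automorphism swapping the last two nodes---hence tilting, and so $T(\varpi)$ is tilting by Proposition \ref{Tensor product of modules with good filtrations}. Since $\ch T(\varpi)$ does not depend on $p,$ one may assume $\characteristic K = 0,$ in which case all modules are semisimple.

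The key difference from the type $B_n$ analog (Lemma \ref{Another_chT(omega)_for_B}) is that $V_Y(\lambda_n)|_G$ is no longer irreducible: Corollary \ref{Corollary_on_V(lambda_n)} gives $V_Y(\lambda_n)|_G \cong V_G(2\varpi_{n-1}) \oplus V_G(2\varpi_n).$ To isolate $T(\varpi),$ I would tensor this decomposition with $V_G(\varpi_1) \cong V_Y(\lambda_1)|_G$ (by \cite[Theorem 1, Table 1 $(\textnormal{I}_4,\textnormal{I}_5)$]{Se}) and apply Lemma \ref{Tensor_products_in_type_A} to obtain
\[
\bigl(V_G(\varpi_1) \otimes V_G(2\varpi_{n-1})\bigr) \oplus T(\varpi) \cong V_Y(\lambda_1+\lambda_n)|_G \oplus V_Y(\lambda_{n+1})|_G.
\]
Proposition \ref{[D]character_of_V_Y(lambda)_G_for_D} (applied with $j=n$) together with \cite[Theorem 1, Table 1 $(\textnormal{I}_4,\textnormal{I}_5)$]{Se} (identifying $V_Y(\lambda_{n+1})|_G \cong V_G(\varpi_{n-1}+\varpi_n)$) then yield the character of the right-hand side as $\chi(\varpi) + \chi(\varpi_1+2\varpi_{n-1}) + 2\chi(\varpi_{n-1}+\varpi_n).$

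The main obstacle is then to disentangle the two summands on the left-hand side. For this I would invoke the nontrivial diagram automorphism $\sigma$ of $D_n$ which swaps $\varpi_{n-1}$ and $\varpi_n$ while fixing $\varpi_1.$ Since $\sigma$ sends $V_G(2\varpi_n)$ to $V_G(2\varpi_{n-1}),$ the tensor product $V_G(\varpi_1) \otimes V_G(2\varpi_{n-1})$ is the $\sigma$-twist of $T(\varpi),$ and its character is obtained from $\ch T(\varpi)$ by applying $\sigma$ to weights. Writing $\ch T(\varpi) = \sum_\mu a_\mu \chi(\mu)$ with $a_\mu \geq 0$ (using semisimplicity), the displayed identity then rewrites as $\sum_\mu (a_\mu + a_{\sigma(\mu)})\chi(\mu) = \chi(\varpi) + \chi(\sigma(\varpi)) + 2\chi(\varpi_{n-1}+\varpi_n).$ Since $\varpi$ is the unique highest weight of $T(\varpi)$ with multiplicity one, $a_\varpi = 1,$ and linear independence of Weyl characters then forces $a_{\sigma(\varpi)} = 0,$ $a_{\varpi_{n-1}+\varpi_n} = 1,$ and all other coefficients zero, yielding $\ch T(\varpi) = \chi(\varpi) + \chi(\varpi_{n-1}+\varpi_n)$ as claimed.
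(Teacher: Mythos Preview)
Your proof is correct and follows essentially the same approach as the paper's: both reduce to characteristic zero, use Corollary~\ref{Corollary_on_V(lambda_n)} to decompose $(V_Y(\lambda_1)\otimes V_Y(\lambda_n))|_G$ as $T(\varpi)\oplus \bigl(V_G(\varpi_1)\otimes V_G(2\varpi_{n-1})\bigr)$, compute the total character via Lemma~\ref{Tensor_products_in_type_A} and Proposition~\ref{[D]character_of_V_Y(lambda)_G_for_D}, and then separate the two summands using the graph automorphism of $D_n$. Your final disentangling step is in fact more explicit than the paper's, which merely notes the dimension equality and leaves the rest to the reader.
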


\begin{proof}
By Lemma \ref{Dimension_of_various_irreducibles_in_type_D}, both $V_G(\varpi_1)$ and $V_G(2\varpi_n)$ are irreducible $KG$-modules, and hence $T(\varpi)$ is tilting by Proposition \ref{Tensor product of modules with good filtrations}. Also $\ch T(\varpi)$ is independent of $p,$ so we may and shall assume $K$ has characteristic zero in the remainder of the proof. Observe that by Corollary \ref{Corollary_on_V(lambda_n)}, we have 
$$
(V_Y(\lambda_1)\otimes V_Y(\lambda_n))|_G \cong T(\varpi)\oplus V_G(\varpi_1)\otimes V_G(2\varpi_{n-1}),
$$
while on the other hand, Lemma \ref{Tensor_products_in_type_A}  yields $
\left(V_Y(\lambda_1)\otimes V_Y(\lambda_n)\right)|_G	\cong 	 V_Y(\lambda)|_G \oplus V_Y(\lambda_{n+1} )|_G.$ Now  Proposition \ref{[D]character_of_V_Y(lambda)_G_for_D} yields $V_Y(\lambda)|_G\cong V_G(\varpi) \oplus V_G(\varpi_1+2\varpi_{n-1}) + V_G(\varpi_{n-1}+\varpi_n),$ while applying  \cite[Theorem 1, Table 1 $(\textnormal{I}_5)$]{Se} gives $V_Y(\lambda_{n+1})|_G \cong  V_G(\varpi_{n+1}+ \varpi_n).$ Consequently we have 
$$
T(\varpi)\oplus V_G(\varpi_1)\otimes V_G(2\varpi_{n-1})\cong 	 V_G(\varpi) \oplus V_G(\varpi_1+2\varpi_{n-1}) \oplus  V_G(\varpi_{n-1}+\varpi_n )^2.
$$ 
Clearly $\dim T(\varpi)=\dim V_G(\varpi_1)\otimes V_G(2\varpi_{n-1}),$ as $\varpi_n$ and $\varpi_{n-1}$ are conjugate under the action of the graph automorphism of order $2$ of $G,$ from which one deduces the desired result.
\end{proof}

\subsection{Various contributions to the truncated Jantzen $p$-sum formula} 

In this section, we compute certain contributions to the truncated Jantzen formula for some Weyl modules $V_G(\varpi),$ where $\varpi$ is as in the first column of Table \ref{Table_D}, starting by the case where $\varpi=2\varpi_1.$

\begin{prop}\label{Type_D:Sum_formula_for_(2,0,...,0)} 
Assume $\varpi=2\varpi_1,$ and write $\mu=0.$ Let $V_G(\varpi)=V^0\supsetneq V^1\supseteq \ldots \supseteq V^k \supseteq 0$ be a Jantzen filtration of $V_G(\varpi).$ Then 
$$
\nu^c_\mu(T_\varpi)=\nu_p(n)\ch L_G(\mu).
$$
\end{prop}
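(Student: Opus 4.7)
The plan is to follow the template of Proposition \ref{Type_B:Sum_formula_for_(2,0,...,0)}, adapted to the combinatorics of $D_n.$ First I would identify the dominant $T$-weights strictly between $\mu = 0$ and $\varpi = 2\varpi_1.$ The $T$-weights of $V_G(\varpi)$ are $\mathscr{W}$-conjugate to $2\varepsilon_1,$ $\varepsilon_1+\varepsilon_2,$ or $0,$ from which one deduces that the only candidates are $\varpi_2 = \varpi - \beta_1$ and $\mu$ itself. The former has multiplicity one in $V_G(\varpi)$ (a standard fact for $\lambda - \alpha$ when $\alpha$ is simple and $\langle \lambda, \alpha \rangle > 0$), hence by \cite{Pr} it cannot afford the highest weight of a composition factor of $V_G(\varpi);$ Lemma \ref{no_contribution_for_certain_weights} then rules out any contribution of $\varpi_2$ to $\nu^c_\mu(T_\varpi).$

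It remains to compute the contribution of $\mu$ itself via Theorem \ref{How_to_determine_contributions}. In $\varepsilon$-coordinates, one has $B_\mu = \rho = (n-1, n-2, \ldots, 1, 0)$ and $\varpi + \rho = (n+1, n-2, n-3, \ldots, 1, 0).$ Since $\varpi - \mu = 2\varpi_1$ has full support $\Pi,$ the positive roots $\beta$ giving pairs $(\beta, r) \in I_\mu$ must themselves have full support; an inspection of the $D_n$ root system yields precisely $\beta = \varepsilon_1 + \varepsilon_\ell$ with $2 \leqslant \ell \leqslant n-1.$

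For each such $\beta,$ the vector $A_{\beta, r}$ agrees with $B_\mu$ outside positions $1$ and $\ell,$ so the $\mathscr{W}$-conjugacy condition reduces to the system $\{|n+1-r|, |n-\ell-r|\} = \{n-1, n-\ell\}.$ The zero entry of $B_\mu$ at position $n$ removes the $W(D_n)$ parity obstruction, allowing us to perform this equality check as if we were working in $W(B_n).$ Arguing case by case exactly as in the $B_n$-proposition, I expect the only solutions to be $r = 2$ with $\ell = n-1$ (killed by $\nu_p(2) = 0,$ since $p \neq 2$) and $(\beta, r) = (\varepsilon_1 + \varepsilon_{n-1}, n);$ in the surviving case, an explicit representative $w \in \mathscr{W}$ sending $A_{\beta, r}$ to $B_\mu$ is given by swapping coordinates $1$ and $n-1$ and then flipping signs at positions $1$ and $n$ (the latter being silent, as that entry is $0$), which lies in $W(D_n)$ and has determinant $-1.$

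Applying Theorem \ref{How_to_determine_contributions} then shows that $\mu$ contributes $-\nu_p(n)\det(w) = \nu_p(n)$ to $\nu^c_\mu(T_\varpi),$ and since $\chi_\mu(\mu) = \ch L_G(\mu)$ by definition, the stated formula follows. The main obstacle will be the careful bookkeeping of the $W(D_n)$ parity condition on sign changes: the argument works only because the presence of a zero entry in $B_\mu$ supplies a compensating, cost-free sign flip that lifts the required signed permutation from $W(B_n)$ into $W(D_n).$
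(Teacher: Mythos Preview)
Your proposal is correct and follows essentially the same route as the paper's own proof: identify $\varpi_2$ and $0$ as the only dominant weights strictly below $\varpi$, eliminate $\varpi_2$ via \cite{Pr} and Lemma~\ref{no_contribution_for_certain_weights}, then run the case analysis of Theorem~\ref{How_to_determine_contributions} on roots $\varepsilon_1+\varepsilon_\ell$ to isolate the single surviving pair $(\varepsilon_1+\varepsilon_{n-1},\,n)$. Your treatment is in fact slightly more careful than the paper's in two places: you correctly restrict to $2\leqslant \ell\leqslant n-1$ (the root $\varepsilon_1+\varepsilon_n$ does \emph{not} have full support in $D_n$, so the paper's range $2\leqslant\ell\leqslant n$ is a harmless overstatement), and you make explicit the $W(D_n)$ parity point---that the zero in position $n$ of $B_\mu$ allows a cost-free sign flip, so the required signed permutation really lies in $W(D_n)$---which the paper handles only implicitly by exhibiting the product $s_{\varepsilon_1+\varepsilon_n}s_{\varepsilon_1-\varepsilon_n}s_{\varepsilon_1-\varepsilon_{n-1}}$.
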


\begin{proof}
We proceed as in the proof of Proposition \ref{Type_B:Sum_formula_for_(2,0,...,0)}, starting by computing all contributions to $\nu_\mu^c(T_\varpi).$  Here the only dominant $T$-weights  $\nu\in X^+(T)$ such that  $\mu\preccurlyeq \nu \prec \varpi$ are $\varpi-\beta_1$ and $\mu$ itself. By \cite{Pr}, the former cannot afford the highest weight of a composition factor of $V_G(\varpi)$ and so Lemma \ref{no_contribution_for_certain_weights} shows that $\varpi-\beta_1$ cannot contribute to $\nu_\mu^c(T_\varpi).$ We now determine  all pairs $(\beta,r)\in I_\mu$ as in Theorem \ref{How_to_determine_contributions}. A straightforward computation yields 
$$
B_\mu = (n-1,\ldots,1,0),
$$
and since  $\varpi-\mu$ has support $\Pi,$ we get that  $\beta\in \{\varepsilon_1+\varepsilon_{\ell}: 2 \leqslant \ell \leqslant n\}$ by definition of $I_\mu.$ Recall from \cite[Planche II]{Bourbaki} that $\mathscr{W}$ acts by all permutations and even number of sign changes of the $\varepsilon_i.$ Also, one checks that for  $2 \leqslant \ell \leqslant n,$  we have 
$$
A_{\varepsilon_1+\varepsilon_\ell,r}= B_\mu +(2-r,\underbrace{0,\ldots,0}_{\ell-2},-r,\underbrace{0,\ldots,0}_{n-\ell}),~r\in \Z.
$$
Consequently $B_\mu \in \mathscr{W} \cdot A_{\varepsilon_1+\varepsilon_\ell,r}$ if and only if $\{| n+1-r| , |n-\ell-r| \}=\{n-1,n-\ell\}.$  We now study  each possibility separately. 
\begin{enumerate}
\item If $n+1-r=n-1,$ then $ r=2$ and hence $\nu_p(r)=0,$ since $p\neq 2$ by assumption. Hence the weight $\varpi-2(\varepsilon_1+\varepsilon_\ell)$ cannot contribute to $\nu^c_\mu(T_\varpi)$ in this situation. 
\item If $n+1-r=n-\ell,$ then $r=\ell +1,$ and $|n-\ell -r|= |n-2\ell -1|.$ One checks that the latter is equal to $n-\ell$ if and only if $\ell =n-1,$ in which case $r=n.$ 
\item If $n+1-r=-n+\ell,$ then $r=2n-\ell +1,$ and $|n-\ell -r| = n+1 \neq n-1.$ 
\item If $n+1-r=-n+1,$ then $r=2n,$ and $|n-\ell-r|= n + \ell \neq n-\ell.$
\end{enumerate} 
 
Consequently,   the only possible contribution to 
$\nu^c_\mu(T_\varpi)$ can only come from the situation where  $\ell=n-1$ and $r=n,$ in which case we have
$$
A_{\varepsilon_1+\varepsilon_{n-1},n}= (1,n-2,\ldots,2,1-n,0).
$$
Using the action of $\mathscr{W}$ described above, one deduces that $\mu=s_{\varepsilon_1+\varepsilon_n} s_{\varepsilon_1-\varepsilon_n}s_{\varepsilon_1-\varepsilon_{n-1}}\cdot (\varpi-r(\varepsilon_1+\varepsilon_{n-1})).$ Therefore $\nu^c_\mu(T_\varpi)=\nu_p(n)\chi _\mu (\mu),$ and since $\chi_\mu(\mu)=\ch L_G(\mu),$ the proof is complete.  
\end{proof}

Proceeding as in the proof of Proposition \ref{Type_D:Sum_formula_for_(2,0,...,0)} , we next compute the formal character $\nu^c_\mu(T_\varpi)$ in the situation where $\varpi=\varpi_1+\varpi_j$ for some $1<j<n-1,$ and $\mu=\varpi_{j-1}.$ Observe that the case $j=2$ was dealt with in \cite[Lemma 4.5.7]{McNinch}.

\begin{prop}\label{Type_D:Sum_formula_for_(1,0,...,0,1,0,...,0)} 
Assume $\varpi=\varpi_1+\varpi_j $ for some $2\leqslant j\leqslant n-2,$ and write $\mu =\varpi_{j-1}.$ Let $V_G(\varpi)=V^0\supsetneq V^1\supseteq \ldots \supseteq V^k \supseteq 0$ be a Jantzen filtration of $V_G(\varpi).$ Then 
$$
\nu^c_\mu(T_\varpi)=\nu_p(j+1)\ch L_G (\varpi_{j+1}+\delta_{j,n-2}\varpi_n  ) + \nu_p(2n-j+1)\ch L_G(\mu ).
$$
\end{prop}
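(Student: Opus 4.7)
I would follow the strategy of the proof of Proposition \ref{Type_B:Sum_formula_for_(1,0,...,0,1,0,...,0)}, adjusting all computations to the $D_n$ root system. The dominant weights $\nu\in X^+(T)$ satisfying $\mu\preccurlyeq \nu\prec \varpi$ are $\nu_1=\varpi_{j+1}+\delta_{j,n-2}\varpi_n$, $\nu_2=\varpi_1+\varpi_{j-2}$ (present only when $j\geqslant 3$), and $\mu$ itself, so by Theorem \ref{How_to_determine_contributions} it suffices to compute each of their contributions to $\nu_\mu^c(T_\varpi)$. Applying Lemma \ref{Dimension_of_various_irreducibles_in_type_D} to the $D_{n-j+2}$-Levi subgroup of $G$ corresponding to the simple roots $\{\beta_{j-1},\ldots,\beta_n\}$ (exactly as in the $B_n$ case) shows that $\nu_2$ does not afford the highest weight of a composition factor of $V_G(\varpi)$, so Lemma \ref{no_contribution_for_certain_weights} rules out any contribution from $\nu_2$.

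For $\nu_1$, since $\varpi-\nu_1=\varepsilon_1-\varepsilon_{j+1}$ has support $\{\beta_1,\ldots,\beta_j\}$, the only candidate positive root is $\beta=\varepsilon_1-\varepsilon_{j+1}$; comparing $A_{\beta,r}=(\varpi+\rho)-r\beta$ coordinatewise with $B_{\nu_1}$ in the $\varepsilon$-basis and running through the admissible range $2\leqslant r\leqslant j+1$ isolates $r=j+1$ as the unique value yielding $\mathscr{W}$-conjugacy, witnessed by $w_{\beta,j+1}=s_{\varepsilon_1-\varepsilon_{j+1}}$ (a transposition, determinant $-1$), giving a contribution of $\nu_p(j+1)$. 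For $\mu$, one has $\varpi-\mu=\varepsilon_1+\varepsilon_j$ with full support $\Pi$, and the positive roots of $D_n$ with full support are precisely $\varepsilon_1+\varepsilon_\ell$ for $2\leqslant \ell\leqslant n-1$. When $\ell\neq j$, the entry $n-j+1$ of $A=\varpi+\rho$ remains at coordinate $j$ of $A_{\beta,r}$ while being absent from the multiset of entries of $B_\mu$, so no $\mathscr{W}$-element sends one to the other. When $\ell=j$, a case analysis of the multiset equation $\{|n+1-r|,|n-j+1-r|\}=\{n,n-j\}$ singles out $r=2n-j+1$; the entries at positions $1$ and $j$ of $A_{\beta,r}$ then equal $-(n-j)$ and $-n$ respectively, and $w_{\beta,r}=s_{\varepsilon_1+\varepsilon_j}\in\mathscr{W}$ (a reflection, determinant $-1$) sends $A_{\beta,r}$ to $B_\mu$, giving a contribution of $\nu_p(2n-j+1)$.

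Summing the three contributions yields $\nu_\mu^c(T_\varpi)=\nu_p(j+1)\chi_\mu(\nu_1)+\nu_p(2n-j+1)\chi_\mu(\mu)$, and since both $V_G(\nu_1)$ and $V_G(\mu)$ are irreducible by Lemma \ref{Dimension_of_various_irreducibles_in_type_D}, one has $\chi_\mu(\nu_1)=\ch L_G(\nu_1)$ and $\chi_\mu(\mu)=\ch L_G(\mu)$, which produces the stated formula. The main routine obstacle will be the exhaustive case analysis of the $\mathscr{W}$-conjugacy condition in the contribution from $\mu$, where every choice of $\ell\in\{2,\ldots,n-1\}$ and every sub-case of the absolute-value decomposition of $r$ must be checked to ensure that only $(\varepsilon_1+\varepsilon_j,\,2n-j+1)$ survives; this is precisely the $D_n$ counterpart of the analogous check in the proof of Proposition \ref{Type_B:Sum_formula_for_(1,0,...,0,1,0,...,0)}, and no additional difficulty arises from the parity restriction on sign changes in $\mathscr{W}$ because the reflection $s_{\varepsilon_1+\varepsilon_j}$ already belongs to $\mathscr{W}$.
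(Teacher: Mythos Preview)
Your proposal is correct and follows essentially the same approach as the paper: list the dominant weights between $\mu$ and $\varpi$, use the Levi argument to rule out $\nu_2$, and apply Theorem~\ref{How_to_determine_contributions} to compute the contributions of $\nu_1$ and $\mu$. Your treatment is in fact slightly more careful than the paper's own proof in two places: you explicitly handle $\nu_2=\varpi_1+\varpi_{j-2}$ via Lemma~\ref{no_contribution_for_certain_weights} (the paper lists this weight but does not spell out the argument in the $D_n$ case), and you correctly restrict the full-support roots to $\varepsilon_1+\varepsilon_\ell$ with $2\leqslant\ell\leqslant n-1$, whereas the paper writes $2\leqslant\ell\leqslant n$ (a harmless overcount, since $\varepsilon_1+\varepsilon_n$ misses $\beta_{n-1}$).
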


\begin{proof} 
We refer the reader to \cite[Lemma 4.5.7]{McNinch} for a proof in the case where $j=2$ and hence assume $j>2$ in what follows. Write $\nu_1 =\varpi_{j+1}+\delta_{j,n-2}\varpi_n.$ For simplicity, we also assume $ j\leqslant n-3$ (so that $\nu_1=\varpi_{j+1}$) and leave the case where $j=n-2$ to the reader. Here the dominant $T$-weights $\nu\in X^+(T)$ satisfying $\mu  \preccurlyeq \nu \prec \varpi$ are $\nu_1,$ $\varpi_1+\varpi_{j-1},$ and $\mu .$  We next determine  all pairs $(\beta,r)\in I_{\nu_1}$ as in Theorem \ref{How_to_determine_contributions}. A straightforward computation yields  
$$
B_{\nu_1}=(n,n-1,\ldots,n-j,n-j-2,\ldots,1,0),
$$
and since $\varpi-\nu_1$ has support $\{\beta_1,\ldots,\beta_j\},$ we get that $\beta=\varepsilon_1-\varepsilon_{j+1}$ by definition of $I_{\nu_1}.$ Also, one easily checks that for $r\in \Z,$ we have 
$$
A_{r,\varepsilon_1-\varepsilon_{j+1}}= B_{\nu_1} +(1-r,\underbrace{0,\ldots,0}_{j-1},r-1,\underbrace{0,\ldots,0}_{n-j-1}). 
$$
We leave it to the reader to show that $B_{\nu_1}\in \mathscr{W}\cdot A_{r,\varepsilon_1-\varepsilon_{j+1}}$ if and only if $r=j+1,$ and then to deduce that $\mu $ contributes to $\nu^c_{\mu }(T_\varpi)$ by $\nu_p(j+1)\chi_{\mu }(\nu_1).$ 

Finally, we determine  all pairs $(\beta,r)\in I_{\mu }$ as in Theorem \ref{How_to_determine_contributions}. Here again, a straightforward computation yields  
$$
B_{\mu }=(n,n-1,\ldots,n-j+2,n-j,\ldots,1,0),
$$
and since $\varpi-\mu $ has support $\Pi,$ we get that $\beta\in \{\varepsilon_1+\varepsilon_\ell : 2\leqslant \ell \leqslant n\}$ by definition of $I_{\mu }.$ Also, one easily checks that $B_{\mu }\notin \mathscr{W}\cdot A_{\varepsilon_1+\varepsilon_\ell,r}$ if $\ell \neq j,$ and hence we assume $\ell = j$ in the remainder of the argument. Setting $\beta=\varepsilon_1+\varepsilon_j,$ we then get
$$
A_{\beta,r}= B_{\mu } + (1-r,\underbrace{0,\ldots,0}_{j-2}, -1-r,\underbrace{0,\ldots,0}_{n-j}), r\in \Z.
$$ 
As usual, a case by case analysis then shows that $B_{\mu }\in \mathscr{W}\cdot A_{\beta,r}$ if and only if $r=2n-j+1,$ from which one deduces that $\mu $ contributes to $\nu^c_{\mu }(T_\varpi)$ by $\nu_p(2n-j+1)\chi_{\mu }(\mu ).$ Consequently, we have 
$$
\nu_{\mu }^c(T_\varpi)=\nu_p(j+1)\chi_{\mu }(\nu_1)+\nu_p(2n-j+1)\chi_{\mu }(\mu ).
$$
 Now  $\chi_{\nu_1}(\mu )=\ch L_G(\mu )$ by Lemma \ref{Dimension_of_various_irreducibles_in_type_D}, while $\chi_{\mu }(\mu )=L_G(\mu )$ by definition, thus completing the proof.
\end{proof}

The next result is very similar to Proposition \ref{Type_D:Sum_formula_for_(1,0,...,0,1,0,...,0)}, the difference residing in the apparition of two composition factors having highest weights conjugate under the action of the graph automorphism of order $2$ of $G.$ Since the proof is fairly identical, the details are omitted.

\begin{prop}\label{Type_D:Sum_formula_for_(1,0,...,0,1,1)} 
Assume $\varpi=\varpi_1+\varpi_{n-1}+\varpi_n,$ and write $\mu=\varpi_{n-2}.$ Also consider a Jantzen filtration $V_G(\varpi)=V^0\supsetneq V^1\supseteq \ldots \supseteq V^k \supseteq 0$ of $V_G(\varpi).$ Then 
$$
\nu^c_\mu(T_\varpi)=\nu_p(n)\ch L_G(2\varpi_n) + \nu_p(n)\ch L_G(2\varpi_{n-1}) + \nu_p(n+2)\ch L_G(\mu).
$$
\end{prop}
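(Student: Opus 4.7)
The plan is to follow closely the template established in Propositions \ref{Type_D:Sum_formula_for_(2,0,...,0)} and \ref{Type_D:Sum_formula_for_(1,0,...,0,1,0,...,0)}. The new phenomenon here is that two dominant weights $\nu_1 = 2\varpi_n$ and $\nu_2 = 2\varpi_{n-1}$, related by the outer graph automorphism of $G$, will both contribute symmetrically to $\nu_\mu^c(T_\varpi)$, with identical coefficients $\nu_p(n)$.

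First I would enumerate the dominant $T$-weights $\nu \in X^+(T)$ with $\mu \preccurlyeq \nu \prec \varpi$. Since a straightforward computation in the $\varepsilon$-basis yields $\varpi - \mu = \beta_1 + \cdots + \beta_n$, each such $\nu$ has the form $\mu + \sum_{i=1}^n c_i \beta_i$ with $c_i \in \{0,1\}$, and the usual dominance conditions in type $D_n$ reduce the candidates to $\mu$, $\nu_1 = 2\varpi_n$, $\nu_2 = 2\varpi_{n-1}$, and $\nu_3 = \varpi_1 + \varpi_{n-3}$. Applying Lemma \ref{chT(omega)_for_D} with $j = n-1$ gives $\ch T(\varpi) = \chi(\varpi) + \chi(\mu) + \chi(\nu_1) + \chi(\nu_2)$, so Proposition \ref{chT(omega)_as_a_sum_of_xi's} forces $[V_G(\varpi), L_G(\nu_3)] = 0$; hence $\nu_3$ does not contribute to $\nu_\mu^c(T_\varpi)$ by Lemma \ref{no_contribution_for_certain_weights}.

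Next I would compute the contributions of $\nu_1$, $\nu_2$, and $\mu$ individually via Theorem \ref{How_to_determine_contributions}, working in coordinates where $\varpi + \rho = (n+1, n-1, n-2, \ldots, 2, 0)$. For $\nu_1$, the support of $\varpi - \nu_1$ is $\{\beta_1, \ldots, \beta_{n-1}\}$, forcing the unique choice $\beta = \varepsilon_1 - \varepsilon_n$; matching absolute-value multisets of $A_{\beta, r}$ with $B_{\nu_1} = (n, n-1, \ldots, 2, 1)$ yields the unique admissible solution $r = n$, with $w_{\beta, n} = s_{\varepsilon_1 - \varepsilon_n}$ of determinant $-1$. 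The case of $\nu_2$ is entirely symmetric: the support $\{\beta_1, \ldots, \beta_{n-2}, \beta_n\}$ forces $\beta = \varepsilon_1 + \varepsilon_n$, and again $r = n$ with $w_{\beta, n} = s_{\varepsilon_1 + \varepsilon_n}$, again of determinant $-1$. For $\mu$, the support $\Pi$ admits the positive roots $\varepsilon_1 + \varepsilon_j$ with $2 \leqslant j \leqslant n-1$; the presence of a fixed coordinate $2$ in $A_{\beta, r}$ that is absent from $B_\mu = (n, n-1, \ldots, 3, 1, 0)$ forces $j = n-1$, whereupon a routine multiset analysis yields $r = n+2$ with $w_{\beta, n+2} = s_{\varepsilon_1 + \varepsilon_{n-1}}$ of determinant $-1$.

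The main obstacle I expect is the bookkeeping forced by $\mathscr{W}(D_n)$ permitting only an even number of sign changes combined with permutations; this restriction means that single-sign-change conjugations available in the $B_n$ analysis must here be realized by reflections through long-and-short-root pairs, and the candidate pairs $(\beta, r)$ must be verified one by one against this constraint. Assembling the three contributions gives $\nu_\mu^c(T_\varpi) = \nu_p(n)\chi_\mu(\nu_1) + \nu_p(n)\chi_\mu(\nu_2) + \nu_p(n+2)\chi_\mu(\mu)$, and since each of $V_G(\nu_1)$, $V_G(\nu_2)$, and $V_G(\mu)$ is irreducible by Lemma \ref{Dimension_of_various_irreducibles_in_type_D}, the truncated characters $\chi_\mu$ agree with $\ch L_G$ of the respective weights, yielding the claimed formula.
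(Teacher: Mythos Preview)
Your proposal is correct and follows precisely the template the paper indicates (the paper omits the proof, noting it is ``fairly identical'' to Proposition \ref{Type_D:Sum_formula_for_(1,0,...,0,1,0,...,0)} except for the two graph-automorphism-conjugate weights, which you handle correctly). One minor stylistic difference: to rule out $\nu_3=\varpi_1+\varpi_{n-3}$ you invoke the tilting bound from Proposition \ref{chT(omega)_as_a_sum_of_xi's}, whereas in the analogous $B_n$ computation (Proposition \ref{Type_B:Sum_formula_for_(1,0,...,0,1,0,...,0)}) the paper instead restricts to a Levi subgroup and appeals to Lemma \ref{Dimension_of_various_irreducibles_in_type_B}; both arguments are valid, and yours is arguably cleaner since it reuses a result already needed in the Conclusion subsection.
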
 

We conclude this section with a result describing the decomposition of $\chi_\mu(\varpi)$ in terms of characters of irreducibles for well-chosen $\varpi,\mu\in X^+(T).$ Even though the proof of the following proposition does not require the  use of the Jantzen $p$-sum formula, we record it here for completeness.

\begin{prop}\label{Type_D:Sum_formula_for_(1,0,...,0,1,0)} 
Let $\xi\in \{1,2\},$ and let $\varpi_\xi=\varpi_1+\xi\varpi_n.$ Also consider the dominant $T$-weight $\mu_\xi= \varpi_{n-1}+(\xi-1)\varpi_n.$  Then 
$$
\chi_{\mu_\xi}(\varpi_\xi)=\ch L_G(\varpi_\xi) + \epsilon_p(n) \ch L_G(\mu_\xi).
$$
\end{prop}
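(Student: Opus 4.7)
The plan is to compute $\chi_{\mu_\xi}(\varpi_\xi)$ by reducing its defining sum to a single non-trivial multiplicity $m_\xi := [V_G(\varpi_\xi), L_G(\mu_\xi)]$, then controlling $m_\xi$ from above via the tilting tensor products of the previous subsection and from below via the strong linkage principle.

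First I would enumerate the dominant weights $\eta$ with $\mu_\xi \preccurlyeq \eta \prec \varpi_\xi$. Writing
\[
\varpi_\xi - \mu_\xi = \varpi_1 + \varpi_n - \varpi_{n-1} = \beta_1 + \beta_2 + \cdots + \beta_{n-2} + \beta_n,
\]
the candidates have the form $\eta = \mu_\xi + \sum_{i \in S} \beta_i$ with $S \subseteq \{1, \ldots, n-2, n\}$; a direct check of the inequalities $\langle \eta, \beta_j \rangle \geq 0$ on the branching $D_n$ Dynkin diagram reduces the admissible dominant $\eta$ strictly smaller than $\varpi_\xi$ to $\{\mu_1\}$ when $\xi = 1$ and to $\{\mu_2,\, \varpi_1 + \varpi_{n-2}\}$ when $\xi = 2$.

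Next I would use Proposition~\ref{chT(omega)_as_a_sum_of_xi's} together with the tensor-product characters. For $\xi = 1$, applying the graph automorphism of $G$ to Lemma~\ref{Another_chT(omega)_for_D} yields $\ch(V_G(\varpi_1) \otimes V_G(\varpi_n)) = \chi(\varpi_1+\varpi_n) + \chi(\varpi_{n-1})$, and combined with the irreducibility of $V_G(\varpi_{n-1})$ from Lemma~\ref{Dimension_of_various_irreducibles_in_type_D} this gives $m_1 \leq 1$. For $\xi = 2$, Lemma~\ref{Another_other_chT(omega)_for_D} together with the irreducibility of $V_G(\mu_2)$ yields both $m_2 \leq 1$ and $[V_G(\varpi_2), L_G(\varpi_1+\varpi_{n-2})] = 0$, the latter because $\varpi_1+\varpi_{n-2}$ is absent from the right-hand side. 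In both cases one therefore obtains
\[
\chi_{\mu_\xi}(\varpi_\xi) = \ch L_G(\varpi_\xi) + m_\xi \, \ch L_G(\mu_\xi) \quad \text{with } m_\xi \in \{0, 1\}.
\]

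Finally I would show $m_\xi = \epsilon_p(n)$. The upper bound comes from the strong linkage principle (Proposition~\ref{Corollary to The linkage principle}): evaluating $\varpi_\xi + \rho$ and $\varpi_\xi - \mu_\xi$ in the standard $\varepsilon$-basis with the inner product normalized so that long roots have length~$1$, a short computation of $d(\varpi_\xi, \mu_\xi)$ produces the divisibility condition $p \mid n$, whence $m_\xi \leq \epsilon_p(n)$. The main obstacle is the converse inequality $m_\xi \geq \epsilon_p(n)$, since strong linkage provides only a necessary condition. Here I would appeal to Proposition~\ref{McNinch_Proposition} applied to the tilting module $T$ constructed above: the canonical maps $\iota: V_G(\varpi_\xi) \hookrightarrow T$ and $\phi: T \twoheadrightarrow H^0(\varpi_\xi)$ with $\iota(\rad(\varpi_\xi)) \subseteq \ker \phi$ together with a dimension comparison, using Weyl's formula for $\dim V_G(\varpi_\xi)$ and the known irreducible dimensions from Lemma~\ref{Dimension_of_various_irreducibles_in_type_D}, pin down $m_\xi$ as exactly $\epsilon_p(n)$ without ever invoking the Jantzen $p$-sum formula.
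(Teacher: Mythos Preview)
Your enumeration of the dominant weights between $\mu_\xi$ and $\varpi_\xi$ is correct, and steps 2--3 do yield the upper bound $m_\xi\leqslant\epsilon_p(n)$: the tilting argument gives $m_\xi\leqslant 1$ and rules out $\varpi_1+\varpi_{n-2}$ when $\xi=2$, while Proposition~\ref{Corollary to The linkage principle} (with $2d(\varpi_\xi,\mu_\xi)=2n$) forces $p\mid n$ whenever $m_\xi>0$.

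The gap is in your final paragraph. The ``dimension comparison'' you sketch cannot produce the lower bound $m_\xi\geqslant\epsilon_p(n)$, because the dimension you would need to feed in is $\dim L_G(\varpi_\xi)$ itself, which is precisely the unknown governed by $m_\xi$. Concretely, from $\iota$ and $\phi$ one obtains $\dim(\ker\phi/\rad(\varpi_\xi))=\dim T-2\dim V_G(\varpi_\xi)+\dim L_G(\varpi_\xi)$; the quantities available from Weyl's formula and Lemma~\ref{Dimension_of_various_irreducibles_in_type_D} (which only covers $L_G(\varpi_i)$, $L_G(2\varpi_{n-1})$ and the like) do not include $\dim L_G(\varpi_\xi)$, so the comparison is circular. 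Proposition~\ref{McNinch_Proposition} and Proposition~\ref{chT(omega)_as_a_sum_of_xi's} give upper bounds only; nothing in that machinery produces a non-vanishing statement.

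The paper closes this gap by a different, more elementary route: since $\varpi_\xi-\mu_\xi=\beta_1+\cdots+\beta_{n-2}+\beta_n$ is supported on an $A_{n-1}$ subsystem of $\Pi(G)$, one passes to the corresponding Levi subgroup $L$ via Lemma~\ref{[Preliminaries] Parabolic embeddings: restriction to Levi subgroup}. The restriction of $\varpi_\xi$ to $L$ is of the form $\lambda_1^L+\lambda_{n-1}^L$ (plus a central shift when $\xi=2$), and Lemma~\ref{Tensor_products_in_type_A} then gives the \emph{exact} multiplicity $[V_L(\lambda_1^L+\lambda_{n-1}^L),L_L(0)]=\epsilon_p(n)$, hence $m_\xi=\epsilon_p(n)$ directly. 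This single Levi argument replaces both your linkage step and the missing lower bound.
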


\begin{proof}
We give a proof of the Proposition in the situation where $\xi=1,$ and leave the case $\xi=2$ to the reader. Here the only dominant $T$-weight $\nu\in X^+(T)$ satisfying $\mu_1 \preccurlyeq \nu \prec \varpi$ is $\mu_1$ itself. Also, an application of Lemma \ref{Tensor_products_in_type_A} to the Levi subgroup of type $A_{n-1}$  corresponding to the simple roots $\beta_1,\ldots,\beta_{n-1}$ shows that   $[V_G(\varpi),L_G(\mu_1)]=\epsilon_p(n),$ thus completing the proof.
\end{proof}

\subsection{Conclusion}  

We conclude by giving proofs of Theorem \ref{Main_Result_1},  Corollary  \ref{Introduction: corollary on the dimension (B)}, and Proposition \ref{Restrictions_of_irreducibles_of_type_A}, in the situation where $G$ is of type $D_n$  and $Y$ is of type $A_{2n+1} $ $(n\geqslant 3).$

\begin{proof}[Proof of Theorem \textnormal{\ref{Main_Result_1}} (The $D_n$-case)]
We proceed exactly as in the $B_n$-case, dealing with each row of Table \ref{Table_D} separately. Since the argument is identical to the one given in the proof in the situation where $G$ is of type $B_n,$ we consistently refer the reader to the proof of the latter for more details.

\begin{enumerate}
\item If $\varpi=2\varpi_1,$ then replacing Lemma \ref{chT(omega)_for_B} by Lemma \ref{chT(omega)_for_D} and Proposition \ref{Type_B:Sum_formula_for_(2,0,...,0)} by Proposition \ref{Type_D:Sum_formula_for_(2,0,...,0)}   yields the desired assertion.
\item If $\varpi=\varpi_1+\varpi_j+\delta_{j,n-1}\varpi_n$ for some $1<j< n,$ then replacing Lemma \ref{chT(omega)_for_B} by Lemma \ref{chT(omega)_for_D}, Proposition \ref{Type_B:Sum_formula_for_(1,0,...,0,1,0,...,0)} by Proposition \ref{Type_D:Sum_formula_for_(1,0,...,0,1,0,...,0)}, and Lemma \ref{Dimension_of_various_irreducibles_in_type_B} by Lemma \ref{Dimension_of_various_irreducibles_in_type_D} yields the result.
\item If $\varpi=\varpi_1+\varpi_{n-1},$ then replacing Lemma \ref{chT(omega)_for_B} by Lemma \ref{Another_chT(omega)_for_D} and Proposition Proposition \ref{Type_B:Sum_formula_for_(1,0,...,0,1)} by Proposition \ref{Type_D:Sum_formula_for_(1,0,...,0,1,0)} yields the result.
\item If $\varpi=\varpi_1+2\varpi_n,$ then replacing Lemma \ref{Another_chT(omega)_for_B} by Lemma \ref{Another_other_chT(omega)_for_D} and Proposition Proposition \ref{Type_B:Sum_formula_for_(1,0,...,0,2)} by Proposition \ref{Type_D:Sum_formula_for_(1,0,...,0,1,0)} yields the result.
\end{enumerate}
Therefore the result holds for each dominant $T$-weight $\varpi$ as in the first column of Table \ref{Table_D}, thus completing the proof.
\end{proof}

The proof of Corollary \ref{Introduction: corollary on the dimension (B)} in the $B_n$-case simply consists in computing the dimension of $V_G(\varpi)$ and $\rad(\varpi),$ this for each $\varpi$ appearing in the first column of Table \ref{Table_B}. The details are thus left to the reader. We conclude with a proof of Proposition \ref{Restrictions_of_irreducibles_of_type_A} in the case where $G$ is of type $B_n.$

\begin{proof}[Proof of Proposition \textnormal{\ref{Restrictions_of_irreducibles_of_type_A}} (The $B_n$-case)]
We give a proof in the situation where $1\leqslant j\leqslant n-1,$ and omit  the other cases, as they can be dealt with in a similar fashion. First observe that by Lemma \ref{Tensor_products_in_type_A}, we have
$$
\ch V|_G=\ch V_Y(\lambda)|_G-\epsilon_p(j+1)\ch L_Y(\lambda_{j+1})|_G.
$$
Also, an application of Proposition \ref{Type_B:restriction_in_characteristic_zero_0<j<n+1} yields $\ch V_Y(\lambda)|_G = \chi(\varpi)+\ch L_G(\varpi_{j-1}),$ while by \cite[Theorem 1, Table 1 $(\textnormal{I}_2,\textnormal{I}_3)$]{Se}, we have $\ch L_Y(\lambda_{j+1})|_G= \ch L_G(\varpi_{j+1}+\delta_{j,n-1}\varpi_n).$ Therefore
$$
\ch V|_G=\chi(\varpi)+ \ch L_G(\varpi_{j-1}) -\epsilon_p(j+1)\ch L_G(\varpi_{j+1}+\delta_{j,n-1}\varpi_n).
$$
Now $\chi(\varpi)= \ch L_G(\varpi)+\epsilon_p(j+1)\ch L_G(\varpi_{j+1}+ \delta_{j,n-1}\varpi_n) + \epsilon_p(2n-j+2)\ch L_G(\varpi_{j-1}) $ by  Theorem \ref{Main_Result_1}, and so the result follows.
\end{proof}

\section*{Acknowledgements}     

I would like to express my deepest thanks to my Ph.D. advisor Professor Donna M. Testerman, for her constant support and her precious guidance during my doctoral  studies, from which this paper is partially drawn. I would also like to extend my appreciation to Professors Timothy C. Burness and Frank L\"ubeck, for their very helpful comments and suggestions on the earlier versions of this paper.

\bibliographystyle{amsalpha}     

\bibliography{References}

\end{document}